\documentclass[11pt]{article}
\usepackage{etex}
\usepackage[all]{xy}

\usepackage{amsfonts}
\usepackage{amsthm}
\usepackage{enumerate}
\usepackage{graphicx}
\usepackage{mathrsfs}
\usepackage{bm}
\usepackage{cite}
\usepackage{amssymb,amsmath} 
\usepackage{makecell}
\usepackage{pgf}
\usepackage{tikz}
\usetikzlibrary{patterns}
\usepackage{pgffor}
\usepackage{pgfcalendar}
\usepackage{pgfpages}
\usepackage{shuffle,yfonts}
\usepackage{mathtools}
\DeclareFontFamily{U}{shuffle}{}
\DeclareFontShape{U}{shuffle}{m}{n}{ <-8>shuffle7 <8->shuffle10}{}

\newcommand{\bfk}{{\boldsymbol{\sl{k}}}}

\newcommand{\de}{\mathrm{d}}
 \allowdisplaybreaks
\usetikzlibrary{arrows,shapes,chains}
 \allowdisplaybreaks

\catcode`!=11
\let\!int\int \def\int{\displaystyle\!int}
\let\!lim\lim \def\lim{\displaystyle\!lim}
\let\!sum\sum \def\sum{\displaystyle\!sum}
\let\!sup\sup \def\sup{\displaystyle\!sup}
\let\!inf\inf \def\inf{\displaystyle\!inf}
\let\!cap\cap \def\cap{\displaystyle\!cap}
\let\!max\max \def\max{\displaystyle\!max}
\let\!min\min \def\min{\displaystyle\!min}
\let\!frac\frac \def\frac{\displaystyle\!frac}
\catcode`!=12

\let\oldsection\section
\renewcommand\section{\setcounter{equation}{0}\oldsection}

\allowdisplaybreaks

\def\a{^{(A)}}

\theoremstyle{plain}
\newtheorem{thm}{Theorem}[section]
\newtheorem{lem}[thm]{Lemma}
\newtheorem{cor}[thm]{Corollary}

\newtheorem{pro}[thm]{Proposition}
\theoremstyle{definition}
\newtheorem{defn}{Definition}[section]
\newtheorem{re}[thm]{Remark}
\newtheorem{exa}[thm]{Example}

\begin{document}
\title{ Regularized Multitangent Functions and Reduction Theorem}
\author{{ {Jia Li\thanks{Email: jialimath001@pku.org.cn}} }\\[1mm]
\small  School of Mathematical Sciences, Peking University, Beijing 100871, P.R. China\\
}

\date{}
\maketitle

\begin{abstract}
    We develop a direct analytic theory of stuffle-regularized
multitangent functions and prove their reduction to finite linear
combinations of monotangent functions, without using mould calculus.
We first establish an asymptotic comparison between one-sided
truncated multiple Hurwitz zeta functions and their stuffle
regularizations at the natural parameter \(T_N-H(s)\), where \(T_N\)
is the harmonic truncation and \(H(s)\) is the harmonic-number function.
Applying this comparison to symmetric multitangent truncations yields
meromorphic, \(1\)-periodic regularized multitangent functions.

An explicit partial-fraction decomposition of each summand, combined
with asymptotic estimates for moving truncation ranges, gives formulas
for the reduction coefficients in terms of stuffle-regularized
multiple zeta values. The constant term and the coefficient of the
monotangent \(\mathcal T(1;s)\) are determined from the limits as
\(\operatorname{Im}s\to\pm\infty\): both vanish whenever the index
contains an entry greater than \(1\), whereas the exceptional indices
\(\{1\}^r\) are evaluated through a sine-quotient generating function.
As a consequence, we obtain a family of relations among regularized
multiple zeta values.
\end{abstract}

\medskip

\noindent{\bf Keywords}: multitangent functions; multiple zeta values; multiple Hurwitz zeta functions; stuffle regularization; partial fractions; reduction theorem.
\medskip

\noindent\textbf{2020 Mathematics Subject Classification.}
Primary 11M32; Secondary 11M99.

\section{Introduction}

Multiple zeta values (MZVs) are the real numbers
\[
\zeta(k_1,\cdots,k_r)
:=
\sum_{0<n_1<\cdots<n_r}
\frac{1}{n_1^{k_1}\cdots n_r^{k_r}},
\qquad
k_1,\cdots,k_{r-1}\geqslant 1,\quad k_r\geqslant 2.
\]
Their study goes back to Euler's investigations of multiple harmonic
sums and was developed systematically in the modern theory of multiple
zeta values by Hoffman, Zagier, and many others
\cite{Euler1,H1992,H1997,DZ1994,Zhao2007d}. Besides their intrinsic interest in number
theory, MZVs occur naturally in the theory of multiple polylogarithms
and mixed Tate motives, as well as in perturbative quantum field theory
\cite{Brown2012,Goncharov2001,BJ,BroadKre1997,Todorov2014,Z2016}.

Two fundamental algebraic structures underlie the theory of MZVs. The
series representation gives rise to the stuffle, or harmonic, product,
whereas the iterated-integral representation gives rise to the shuffle
product. Their interaction produces the double-shuffle relations.
Since these products naturally involve non-admissible indices, a
regularization procedure is required. The regularized double-shuffle
formalism was developed systematically by Ihara, Kaneko, and Zagier
\cite{IKZ2006}; see also \cite{KanekoYa2018} for further identities
involving regularizations.

A functional analogue of the algebra of MZVs was introduced and
studied systematically by Bouillot in the form of multitangent
functions \cite{Bouillot2014}. For an index
\[
\bfk=(k_1,\cdots,k_r)\in(\mathbb Z_{>0})^{\,r}
\]
satisfying \(k_1,k_r>1\), the corresponding multitangent function
is
\[
\mathcal T(\bfk;s)
:=
\sum_{-\infty<n_1<\cdots<n_r<\infty}
\frac{1}
{(n_1+s)^{k_1}\cdots(n_r+s)^{k_r}},
\qquad
s\in\mathbb C-\mathbb Z.
\]
The series is absolutely convergent and defines a \(1\)-periodic
meromorphic function. In depth one, one obtains the monotangent
functions
\[
\mathcal T(k;s)
=
\sum_{n\in\mathbb Z}\frac{1}{(n+s)^k},
\qquad k\geqslant 2,
\]
together with the symmetrically regularized function
\[
\mathcal T(1;s)
:=
\lim_{N\to+\infty}\sum_{-N<n<N}\frac{1}{n+s}
=
\pi\cot(\pi s).
\]

Bouillot developed a comprehensive algebraic and analytic theory of
these functions in the language of mould calculus
\cite{Bouillot2014}. In particular, he introduced the fundamental
operations of reduction into monotangent functions, projection onto
multitangent functions, and trifactorization in terms of multiple
Hurwitz zeta functions. He also studied the regularization of
multitangent functions associated with divergent indices. One of the
main consequences of this theory is the reduction theorem: a
multitangent function can be expressed as a finite linear combination
of monotangent functions, with coefficients belonging to the algebra
of multiple zeta values.

The effectiveness of the multitangent approach is also illustrated by
its applications to relations among MZVs. In particular, Hirose used
Bouillot's theory to obtain an explicit version of the parity theorem
for multiple zeta values \cite{Hirose2025}. 

Although the regularized reduction theorem was first established by Bouillot
using mould calculus, our purpose is different: we provide a direct
analytic proof based only on finite truncations, multiple Hurwitz zeta
functions, partial fraction decomposition, and asymptotic estimates.

The first ingredient is an asymptotic comparison between the
one-sided truncated multiple Hurwitz zeta function
\[
\zeta_{(0,N)}(\bfk;s)
:=
\sum_{0<n_1<\cdots<n_r<N}
\frac{1}
{(n_1+s)^{k_1}\cdots(n_r+s)^{k_r}}
\]
and its stuffle regularization. More precisely, if
\[
T_N:=\sum_{n=1}^{N-1}\frac1n
\]
and
\[
H(s):=\int_0^1\frac{1-t^s}{1-t}\,\mathrm dt,
\]
then we prove
\[
\zeta_{(0,N)}(\bfk;s)
=
\zeta_*^{T_N-H(s)}(\bfk;s)
+
O_{\bfk,s}
\left(
\frac{(1+\log N)^{r-1}}{N}
\right)
\]
locally uniformly in the strip
\[
|\operatorname{Re}s|<\frac12.
\]
The correction \(H(s)\) arises naturally from the elementary
asymptotic formula
\[
\sum_{n=1}^{N-1}\frac1{n+s}
=
T_N-H(s)+O_s(N^{-1}).
\]

Applying this comparison to symmetric finite truncations, we define a
stuffle-regularized multitangent function
\[
\mathcal T_*^T(\bfk;s)
\]
for every index \(\bfk\). We show directly that it is
meromorphic and \(1\)-periodic in \(s\), and that it agrees with the
ordinary multitangent function whenever \(k_1,k_r\geqslant2\).

The second ingredient is a global partial-fraction decomposition of
the rational function
\[
\frac{1}
{(n_1+s)^{k_1}\cdots(n_r+s)^{k_r}}.
\]
For each distinguished variable \(n_j\), its principal part at
\(s=-n_j\) produces powers of the single factor \(n_j+s\), while the
remaining coefficients are finite multiple zeta sums in the
differences \(n_i-n_j\). Applying this identity term by term gives a
finite reduction formula for the symmetric truncation
\(\mathcal T_N(\bfk;s)\). We then analyze the resulting moving
truncation ranges as \(N\to+\infty\). This converts their coefficients
into explicit combinations of stuffle-regularized MZVs.

\begin{thm}[Main Theorem]

Our main result is the following reduction formula. For every
index $\bfk=(k_1,\cdots,k_r)$, one has
\[
\mathcal T_*^T(\bfk;s)
=
c_0(\bfk)
+
c_1(\bfk)\mathcal T(1;s)
+
\sum_{j=1}^{r}
\sum_{m=0}^{k_j-2}
c_{j,m}(\bfk)\,
\mathcal T(k_j-m;s),
\]
where the coefficients \(c_{j,m}(\bfk)\) are given explicitly by
finite sums of products of stuffle-regularized multiple zeta values.
The two exceptional coefficients are
\[
c_0(\bfk)
=
\begin{cases}
\dfrac{(i\pi)^r}{r!},
&
\bfk=\{1\}^r, 2\mid r,
\\[8pt]
0,
&
\text{otherwise},
\end{cases}
\]
and
\[
c_1(\bfk)
=
\begin{cases}
(-1)^{(r-1)/2}\dfrac{\pi^{r-1}}{r!},
&
\bfk=\{1\}^r
,2\nmid r,
\\[8pt]
0,
&
\text{otherwise}.
\end{cases}
\]
Here $\{1\}^r=(\underbrace{1,\cdots,1}_{r})$.
\end{thm}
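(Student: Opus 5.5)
The proof splits into two parts. First we obtain the shape of the formula together with the coefficients \(c_{j,m}(\bfk)\) from a finite partial-fraction identity. Then we determine the two exceptional coefficients \(c_0,c_1\) from the behaviour as \(\operatorname{Im}s\to\pm\infty\).

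\textbf{Shape of the formula.} For distinct integers \(n_1<\cdots<n_r\), we decompose the summand
\[
\frac{1}{(n_1+s)^{k_1}\cdots(n_r+s)^{k_r}}
\]
into principal parts at each pole \(s=-n_j\). The coefficient of \((n_j+s)^{-(k_j-m)}\) is a finite sum of products of factors \((n_i-n_j)^{-a}\) with signs. Summing over the symmetric truncation \(-N<n_1<\cdots<n_r<N\) and fixing \(n_j\), the remaining variables \(n_i-n_j\) split into a negative block \((i<j)\) and a positive block \((i>j)\). These give truncated MZV-type sums over moving ranges \((0,N-n_j)\) and \((0,N+n_j)\), paired with \((n_j+s)^{-(k_j-m)}\).

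We then let \(N\to\infty\). We replace each moving truncated sum by its stuffle-regularized polynomial in \(T_{N\pm n_j}\) plus an error \(O((1+\log N)^{r-1}/N)\). This uses the one-sided comparison theorem with \(s=0\), i.e.\ the classical IKZ asymptotics. Since \(T_{N\pm n}=T_N+O(|n|/N)\), terms with exponent \(k_j-m\ge2\) converge to \(c_{j,m}(\bfk)\,\mathcal T(k_j-m;s)\), where \(c_{j,m}\) is a finite sum of products of regularized MZVs. The \(T_N\)-dependence must recombine so that it matches the definition of \(\mathcal T_*^T\). The terms with exponent \(1\) (\(m=k_j-1\)) give sums \(\sum_n P(T_{N-n},T_{N+n})/(n+s)\), which do not converge termwise.

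\textbf{Main obstacle.} We expect the main obstacle to be the exponent-\(1\) terms. Here the polynomial dependence on \(\log(N\mp n)\) produces, after summation, a genuinely new function rather than a multiple of \(\pi\cot\pi s\). The plan is to avoid computing them directly. Since \(\mathcal T_*^T\) is already known to be meromorphic and \(1\)-periodic, we subtract the convergent part \(\sum_{j}\sum_{m\le k_j-2}c_{j,m}\mathcal T(k_j-m;s)\). The remainder \(R(s)\) is then \(1\)-periodic with at most simple poles at the integers. Bounding its growth in the strip via the comparison estimates shows that \(R(s)=c_0+c_1\pi\cot\pi s\), by Liouville's theorem applied in \(q=e^{2\pi i s}\).

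\textbf{The constants \(c_0,c_1\).} Every \(\mathcal T(k;s)\) with \(k\ge2\) tends to \(0\) as \(\operatorname{Im}s\to\pm\infty\), while \(\pi\cot\pi s\to\mp i\pi\). Hence
\[
c_0=\tfrac12\bigl(\lim_{+\infty}+\lim_{-\infty}\bigr)\mathcal T_*^T(\bfk;s),\qquad
c_1=\tfrac{i}{2\pi}\bigl(\lim_{+\infty}-\lim_{-\infty}\bigr)\mathcal T_*^T(\bfk;s).
\]
If some \(k_i>1\), we show that \(\mathcal T_*^T(\bfk;s)\to0\) using dominated convergence on the truncated representation, with the factor \((n_i+s)^{-k_i}\) supplying decay. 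For \(\bfk=\{1\}^r\) we use the generating series
\[
\sum_r \mathcal T_{N}(\{1\}^r;s)\,x^r=\prod_{|n|<N}\Bigl(1+\frac{x}{n+s}\Bigr).
\]
In the symmetric limit this equals \(\sin\pi(s+x)/\sin\pi s\) up to regularization factors (\(T\)-dependent exponentials that the stuffle regularization removes), and
\[
\frac{\sin\pi(s+x)}{\sin\pi s}=\cos\pi x+\cot(\pi s)\sin\pi x.
\]
Comparing coefficients of \(x^r\), the even-\(r\) terms of \(\cos\pi x\) give \(c_0\) and the odd-\(r\) terms of \(\sin\pi x\) give \(c_1\) as stated; the two are checked to agree. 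Finally, the reduction identity evaluated in the limits yields the advertised relations among regularized MZVs.
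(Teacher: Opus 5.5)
Your architecture differs from the paper's, and the difference is worth keeping. The paper evaluates the simple-pole sums $\sum_{0<|n|<N}C_N(n)/n$ explicitly as the moving-window integral $C_0^{T_N}(\bfk)$. This gives an asymptotic formula for all of $\mathcal T_N(\bfk;s)$ with coefficients polynomial in $T_N$, and the paper passes to arbitrary $T$ because a polynomial $D$ with $D(T_N)=o(1)$ must vanish. Your Liouville argument in $q=e^{2\pi i s}$ would bypass Section~5 entirely.

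\textbf{Main gap: the vertical limits.} The step carrying the real content of the theorem is $\mathcal T_*^T(\bfk;iy)\to0$ when some $k_i>1$, and it is not justified. For divergent $\bfk$, $\mathcal T_*^T(\bfk;s)$ is not the limit of $\mathcal T_N(\bfk;s)$; for example $\mathcal T_N(2,1;s)=T_N\,\mathcal T(2;s)+o(1)$. Moreover, $\mathcal T_N=\mathcal T_*^{T_N}+o(1)$ holds only locally uniformly. So there is no representation to which dominated convergence in $y$ applies. The interchange of $N\to\infty$ and $y\to\pm\infty$ genuinely fails: for each fixed $N$, $\mathcal T_N(\bfk;iy)\to0$ for every nonempty index, including $\{1\}^r$, whereas $\mathcal T_*^T(1;iy)=\pi\cot(\pi iy)\to-i\pi$.

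The missing idea is to work with the defining trifactorization \eqref{eq:regularized-multitangent}. If $\bfk$ has a part $\geqslant2$, then $|\bfk|>\ell(\bfk)$. By weight homogeneity, the stuffle decomposition of $\bfk$ then contains no empty-index term, so
\[
\zeta_*^{T-H(\pm iy)}(\bfk;\pm iy)=O\bigl(|y|^{-1}(\log|y|)^{A}\bigr).
\]
In each product of \eqref{eq:regularized-multitangent}, the factor containing the large part decays at this rate while the other factor grows at most polylogarithmically. The terms carrying $s^{-k_j}$ decay trivially.

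\textbf{Other steps needing repair.}
\begin{itemize}
\item The terms with $k_j-m\geqslant2$ do not converge. They equal $C_{j,m}^{T_N}(\bfk)\,\mathcal T(k_j-m;s)+o(1)$ with divergent polynomial coefficients, so ``the $T_N$-dependence must recombine'' is exactly what has to be proved. Since you do not compute the simple-pole terms, you cannot run the polynomial argument on all of $\mathcal T_N$. What your Liouville step actually needs is the principal part of $\mathcal T_*^T(\bfk;s)$ at $s=0$, and this can be read off from the definition. By Corollary~\ref{cor:principal-part-multitangent} it is $\sum_j\sum_{m=0}^{k_j-1}B^T_{j,m}(\bfk)s^{m-k_j}$ with $B^T_{j,m}=C^T_{j,m}$, and periodicity transports it to every integer. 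On your route the partial-fraction analysis is therefore unnecessary.
\item The growth bound for Liouville cannot come from the comparison estimates, whose constants depend on a compact set. It must also come from the trifactorization, using $|n+s|\geqslant\max(n/2,|\operatorname{Im}s|)$ in the strip together with $H(s)=\log s+O(1)$.
\item For $\{1\}^r$, the symmetric product tends to $\sin\pi(s+x)/\sin\pi s$ with no exponential factors to remove. What you need is that, since $\mathcal T_N(\{1\}^r;s)$ converges, the polynomial $X\mapsto\mathcal T_*^X(\{1\}^r;s)$ is constant and equals that limit.
\end{itemize}
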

In particular, if at least one component of \(\bfk\) is greater than
\(1\), both the constant term and the coefficient of
\(\mathcal T(1;s)\) vanish. When \(k_1,k_r\geqslant2\), all the remaining
coefficients are ordinary convergent MZVs. With the normalization \(T=0\), our construction specializes, up to
the notational conventions adopted here, to Bouillot's regularization,
and the reduction formula above recovers his reduction theorem
\cite{Bouillot2014}.

The exceptional pure-one indices are determined by the generating
function
\[
\sum_{r=0}^{\infty}
\mathcal T_*^T(\{1\}^r;s)u^r
=
\frac{\sin\pi(s+u)}{\sin\pi s}.
\]
For all other indices, the constant and simple-pole coefficients are
obtained from the limits of the regularized multitangent function as
\(\operatorname{Im}s\to\pm\infty\). Finally, comparison of the local
Laurent expansion with the reduction formula yields a family of
relations among stuffle-regularized MZVs. The constant-term member of
this family is closely related to the identity used in Hirose's
multitangent proof of the MZV parity theorem \cite{Hirose2025}.

The paper is organized as follows. In Section~2, we introduce
stuffle-regularized multiple Hurwitz zeta functions, prove the
asymptotic comparison with finite truncations, and establish the
estimates required along the imaginary axis. In Section~3, we define
regularized multitangent functions, prove their compatibility with
convergent multitangent functions, and establish their periodicity and
local Laurent expansions. Section~4 contains the global
partial-fraction decomposition and the resulting finite reduction
formula. In Section~5, we study the asymptotic behavior of the shifted
finite multiple zeta coefficients occurring in that formula. Finally,
in Section~6, we prove the regularized reduction theorem, evaluate the
exceptional pure-one indices, and derive the resulting relations among
regularized multiple zeta values.

\section{Stuffle-Regularized Multiple Hurwitz Zeta Functions}

In this section, we introduce multiple Hurwitz zeta functions and
their stuffle regularizations. These functions provide the basic
analytic ingredients for the construction of regularized multitangent
functions in the next section.

We first introduce truncated multiple Hurwitz zeta sums and recall the
standard stuffle-regularization decomposition. We then compare the
one-sided truncation with the corresponding regularized multiple
Hurwitz zeta function at the natural parameter
\[
T_N-H(s),
\]
where \(T_N\) is the harmonic truncation and \(H(s)\) is the harmonic
number function. We also establish a local Taylor expansion and
the decay estimates along the imaginary axis that will be used later
to determine the exceptional terms in the reduction theorem.

\subsection{Indices and truncated multiple Hurwitz zeta functions}

An \emph{index} is a finite sequence of positive integers
\[
\bfk=(k_1,\cdots,k_r)\in(\mathbb Z_{>0})^{\,r}.
\]
Its depth and weight are defined by
\[
\ell(\bfk):=r,
\qquad
|\bfk|:=k_1+\cdots+k_r.
\]
The empty index is denoted by \(\varnothing\), and we set
\[
\ell(\varnothing)=|\varnothing|=0.
\]
The reversal of \(\bfk\) is
\[
\overleftarrow{\bfk}:=(k_r,\cdots,k_1).
\]
For \(0\leqslant j\leqslant r\), we use the notation
\[
\bfk_{[1,j]}:=(k_1,\cdots,k_j),
\qquad
\bfk_{(j,r]}:=(k_{j+1},\cdots,k_r),
\]
with
\[
\bfk_{[1,0]}=\bfk_{(r,r]}=\varnothing.
\]
For \(1\leqslant j\leqslant r\), we also write
\[
\bfk_{[1,j)}:=(k_1,\cdots,k_{j-1}),
\]
with $\bfk_{[1,1)}=\varnothing$. An index \(\bfk\) is called \emph{admissible} if either
\(\bfk=\varnothing\), or its last component satisfies \(k_r\geqslant2\). For convenience, we also define the strip
$$\mathcal{L}:=\left\{s\in\mathbb{C}\bigg||\text{Re}(s)|<\frac{1}{2}\right\}$$

\begin{defn}\label{def:truncated-Hurwitz}
Let
\[
A,B\in\mathbb Z\cup\{-\infty,+\infty\},
\qquad A<B,
\]
and let \(\bfk=(k_1,\cdots,k_r)\) be an index. We define
\[
\zeta_{(A,B)}(\bfk;s)
:=
\sum_{A<n_1<\cdots<n_r<B}
\frac{1}
{(n_1+s)^{k_1}\cdots(n_r+s)^{k_r}},
\]
whenever the sum is defined. If \(B=+\infty\), we require \(k_r\geqslant2\), and if
\(A=-\infty\), we require \(k_1\geqslant2\). For finite \(A\) and \(B\),
the sum is finite. For \(m\in\mathbb Z_{\geqslant0}\), define the shifted coefficient
\begin{align}
\zeta_{(A,B),m}(\bfk;s)
:=
\sum_{\substack{\lambda_1+\cdots+\lambda_r=m\\
                 \lambda_1,\cdots,\lambda_r\geqslant0}}
\left(
\prod_{\nu=1}^r\binom{-k_\nu}{\lambda_\nu}
\right)
\zeta_{(A,B)}
(k_1+\lambda_1,\cdots,k_r+\lambda_r;s).
\label{eq:shifted-truncated-Hurwitz}
\end{align}
For the empty index, we use the convention
\[
\zeta_{(A,B),m}(\varnothing;s):=\delta_{m,0}.
\]
Whenever \(s=0\) is allowed, we abbreviate
\[
\zeta_{(A,B)}(\bfk):=\zeta_{(A,B)}(\bfk;0),
\qquad
\zeta_{(A,B),m}(\bfk):=\zeta_{(A,B),m}(\bfk;0).
\]
In particular, for every admissible index \(\bfk\), we write
\[
\zeta(\bfk;s)
:=
\zeta_{(0,+\infty)}(\bfk;s)
=
\sum_{0<n_1<\cdots<n_r}
\frac{1}
{(n_1+s)^{k_1}\cdots(n_r+s)^{k_r}},
\]
and
\[
\zeta(\bfk):=\zeta(\bfk;0).
\]
\end{defn}

\medskip
\subsection{Stuffle regularization}

We recall the stuffle product on indices. The empty index is the unit,
and recursively
\begin{align}
(a,\boldsymbol{\alpha})*(b,\boldsymbol{\beta})
={}&
\bigl(a,\boldsymbol{\alpha}*(b,\boldsymbol{\beta})\bigr)
+
\bigl(b,(a,\boldsymbol{\alpha})*\boldsymbol{\beta}\bigr)
\nonumber\\
&+
\bigl(a+b,\boldsymbol{\alpha}*\boldsymbol{\beta}\bigr).
\label{eq:stuffle-product}
\end{align}
Let \(\mathfrak H_*^1\) be the stuffle algebra generated by all
indices, and let \(\mathfrak H_*^0\) be the subalgebra generated by the
empty index and all admissible indices.

\begin{lem}[Stuffle decomposition]\label{lem:stuffle-decomposition}
One has
\[
\mathfrak H_*^1=\mathfrak H_*^0[(1)].
\]
More precisely, every index \(\bfk\) of depth \(r\) admits a unique
decomposition
\begin{align}
\bfk
=
\sum_{l=0}^{r}
\boldsymbol{u}_l*(1)^{*l},
\qquad
\boldsymbol{u}_l\in\mathfrak H_*^0,
\label{eq:stuffle-decomposition}
\end{align}
where every index occurring in
\(\boldsymbol{u}_l\) has depth at most \(r-l\).
The decomposition preserves the weight.
\end{lem}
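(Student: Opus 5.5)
The plan is to argue by induction on the depth \(r\), peeling off the trailing \(1\)'s of \(\bfk\). First I would fix the degree bookkeeping. Write \(\bfk=(\bfk',\{1\}^l)\) with \(\bfk'\) either empty or admissible, that is, with last entry at least \(2\). The tool throughout is the explicit stuffle formula with the one-letter word \((1)\):
\[
\bfk*(1)=\sum_{i=0}^{r}(k_1,\cdots,k_i,1,k_{i+1},\cdots,k_r)+\sum_{i=1}^{r}(k_1,\cdots,k_i+1,\cdots,k_r).
\]
This follows from \eqref{eq:stuffle-product} by an easy induction on \(r\). Every word on the right has weight \(|\bfk|+1\), and the stuffle product is weight-homogeneous. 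Hence any decomposition produced below preserves weight automatically, and I will not track this further.

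\emph{Existence.} I argue by induction on \(r\), and within a fixed depth by induction on the number \(l\) of trailing ones. If \(l=0\), then \(\bfk\in\mathfrak H_*^0\) and we take \(\boldsymbol u_0=\bfk\) with all other \(\boldsymbol u_l=0\). If \(l\geqslant1\), write \(\bfk=(\boldsymbol{\kappa},1)\) with \(\boldsymbol\kappa\) of depth \(r-1\). By the formula above,
\[
\boldsymbol\kappa*(1)=l\,(\boldsymbol\kappa,1)+(\text{words of depth } r \text{ with fewer than } l \text{ trailing ones})+(\text{words of depth } r-1).
\]
The factor \(l\) counts the insertions of the new letter \(1\) among the final run of \(l-1\) ones of \(\boldsymbol\kappa\), including the position at the very end. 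The other insertions produce a depth-\(r\) word ending in fewer than \(l\) ones. The stuffed terms have depth \(r-1\); if a \(1\) becomes a \(2\) inside the trailing run, the number of trailing ones decreases. So
\[
\bfk=\tfrac1l\Bigl(\boldsymbol\kappa*(1)-\cdots\Bigr).
\]
The outer induction applies to \(\boldsymbol\kappa\), giving \(\boldsymbol\kappa=\sum_{l'\leqslant r-1}\boldsymbol v_{l'}*(1)^{*l'}\) with depth of \(\boldsymbol v_{l'}\) at most \(r-1-l'\). Multiplying by \((1)\) gives terms \(\boldsymbol v_{l'}*(1)^{*(l'+1)}\), whose depth bound \(r-1-l'=r-(l'+1)\) is exactly what is required. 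The remaining words are handled by the inner induction (same depth, fewer trailing ones) or by the outer induction (smaller depth). For a word of depth \(r'\leqslant r\), the bound \(r'-l\leqslant r-l\) still holds. Note that \(\mathbb Q\)-coefficients are needed because of the division by \(l\).

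\emph{Uniqueness.} This amounts to showing that \((1)\) is algebraically independent over \(\mathfrak H_*^0\), that is, \(\sum_l \boldsymbol u_l*(1)^{*l}=0\) forces every \(\boldsymbol u_l=0\). I would use a leading-term argument: consider the words with the maximal number of trailing ones. Suppose \(L\) is the largest index with \(\boldsymbol u_L\neq0\), and let \(\boldsymbol w\) be a word occurring in \(\boldsymbol u_L\). Then \(\boldsymbol w*(1)^{*L}\) contains \((\boldsymbol w,\{1\}^L)\) with coefficient \(L!\). Every other word appearing in any \(\boldsymbol u_l*(1)^{*l}\) has fewer than \(L\) trailing ones attached after an admissible prefix, or has a different admissible prefix. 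To make this precise, define the linear map \(\mathfrak H_*^1\to\mathfrak H_*^0\otimes\mathbb Q[x]\) sending \((\boldsymbol a,\{1\}^m)\) to \(\boldsymbol a\,x^m/m!\) for admissible \(\boldsymbol a\). One checks that modulo lower \(x\)-degree it sends \(\boldsymbol u*(1)^{*l}\) to \(\boldsymbol u\, x^l\). Comparing top coefficients then gives \(\boldsymbol u_L=0\), a contradiction.

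The main obstacle is this uniqueness step, specifically the verification that the top \(x\)-degree term is exactly \(\boldsymbol u\,x^l\) and that no cancellation can occur. Existence is routine once the counting coefficient \(l\) is identified.
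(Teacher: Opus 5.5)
Your argument is correct, but it follows a different route from the paper. The paper gives no proof: it cites the standard decomposition $\mathfrak H^1_*=\mathfrak H^0_*[y]$ from \cite{IKZ2006}. It then transports this to the present last-letter convention by reversing indices, which is legitimate because reversal is an automorphism of the stuffle product.

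You instead prove the statement directly in the paper's convention.
\begin{itemize}
\item Existence is by induction on the pair (depth, number of trailing ones). The key fact is that $\boldsymbol\kappa*(1)$ contains $(\boldsymbol\kappa,1)$ with coefficient exactly $l$, and otherwise only words of depth $r$ with fewer trailing ones or words of depth $r-1$.
\item Uniqueness is proved in the stronger form that $(1)$ is algebraically independent over $\mathfrak H^0_*$, using the leading-term map $(\boldsymbol a,\{1\}^m)\mapsto\boldsymbol a\,x^m/m!$.
\end{itemize}

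The check you flag as the main obstacle is short. Take $\boldsymbol w$ admissible or empty. A trailing $1$ in a word of $\boldsymbol w*(1)^{*l}$ can only be an unmerged copy of $(1)$ placed after the letter containing $w_{\rm last}\geqslant2$. Hence there are at most $l$ trailing ones, with equality only for $(\boldsymbol w,\{1\}^l)$, which occurs with multiplicity $l!$. Since distinct admissible words are linearly independent, the top coefficient $\boldsymbol u_L$ cannot cancel.

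The citation buys brevity and places the lemma inside the general structure theory of the stuffle algebra. Your proof is self-contained and avoids the convention translation. More importantly, it explicitly delivers two features that a bare citation leaves for the reader to extract: weight homogeneity, and the bound that every word in $\boldsymbol u_l$ has depth at most $r-l$. These are exactly what Theorem~\ref{thm:regularized-Hurwitz-asymptotic} and Corollary~\ref{cor:regularized-Hurwitz-vertical-decay} use later.
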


\begin{proof}
This is the standard regularization decomposition of the stuffle
algebra; see, for example, \cite{IKZ2006}. The version used here is
obtained from the usual convention by reversing the indices, since our
admissibility condition is imposed on the last component.
\end{proof}

For an admissible index, the map
\[
\bfk\longmapsto\zeta(\bfk;s)
\]
is a homomorphism for the stuffle product. Lemma
\ref{lem:stuffle-decomposition} therefore gives the following
definition.

\begin{defn}[Stuffle regularization]
\label{def:stuffle-regularization}
Let \(X\) be an indeterminate. If
\[
\bfk
=
\sum_{l=0}^{r}
\boldsymbol{u}_l*(1)^{*l}
\]
is the decomposition in Lemma~\ref{lem:stuffle-decomposition}, define
\begin{align}
\zeta_*^X(\bfk;s)
:=
\sum_{l=0}^{r}
\zeta(\boldsymbol{u}_l;s)X^l,
\label{eq:stuffle-regularized-Hurwitz}
\end{align}
where the convergent Hurwitz zeta map is extended linearly to
\(\mathfrak H_*^0\), and
\[
\zeta(\varnothing;s)=1.
\]
Equivalently, \(\zeta_*^X(\,\cdot\,;s)\) is the unique stuffle
homomorphism extending the convergent multiple Hurwitz zeta map and
satisfying
\[
\zeta_*^X(1;s)=X.
\]
We also put
\[
\zeta_*^X(\bfk):=\zeta_*^X(\bfk;0).
\]
For \(m\geqslant0\), define
\begin{align}
\zeta_{*,m}^X(\bfk)
:=
\sum_{\substack{\lambda_1+\cdots+\lambda_r=m\\
                 \lambda_1,\cdots,\lambda_r\geqslant0}}
\left(
\prod_{\nu=1}^r\binom{-k_\nu}{\lambda_\nu}
\right)
\zeta_*^X
(k_1+\lambda_1,\cdots,k_r+\lambda_r).
\label{eq:shifted-regularized-MZV}
\end{align}
For the empty index, set
\[
\zeta_{*,m}^X(\varnothing):=\delta_{m,0}.
\]
\end{defn}

The decomposition is homogeneous in weight. Consequently,
\(\zeta_*^X(\bfk;s)\) has an expansion of the form
\begin{align}
\zeta_*^X(\bfk;s)
=
\sum_{l=0}^{r}
X^l
\sum_{\boldsymbol{a}}
c_{l,\boldsymbol{a}}
\zeta(\boldsymbol{a};s),
\qquad
c_{l,\boldsymbol{a}}\in\mathbb Q,
\label{eq:regularization-polynomial-form}
\end{align}
where every \(\boldsymbol{a}\) is admissible or empty.

\begin{exa}\label{ex:regularization-21}
The stuffle identity
\[
(2)*(1)=(2,1)+(1,2)+(3)
\]
gives
\[
(2,1)=(2)*(1)-(1,2)-(3).
\]
Therefore
\[
\zeta_*^X(2,1;s)
=
X\cdot\zeta(2;s)-\zeta(1,2;s)-\zeta(3;s),
\]
and, at \(s=0\),
\[
\zeta_*^X(2,1)
=
X\cdot\zeta(2)-\zeta(1,2)-\zeta(3).
\]
\end{exa}

\subsection{The harmonic correction and truncation asymptotics}

\begin{defn}\label{def:harmonic-correction}
For \(\operatorname{Re}s>-1\), we define the \emph{harmonic number function} as
\[
H(s)
:=
\int_0^1\frac{1-t^s}{1-t}\,\de t.
\]
Then
\[
H(s)=\psi(1+s)+\gamma
=
\sum_{n=1}^{\infty}
\left(
\frac1n-\frac1{n+s}
\right),
\]
where \(\psi\) is the digamma function. The function \(H(s)\) extends
meromorphically to \(\mathbb C\) and satisfies
\[
H(s+1)=H(s)+\frac1{s+1}.
\]
\end{defn}

For later use, put
\[
T_N:=\zeta_{(0,N)}(1)
=
\sum_{n=1}^{N-1}\frac1n.
\]

\begin{lem}[Growth of finite Hurwitz sums]
\label{lem:finite-Hurwitz-growth}
Let $\bfk=(k_1,\cdots,k_r)$ be an index. Then, uniformly for \(s\in\mathcal{L}\), one has
\[
\left|
\zeta_{(0,N)}(\bfk;s)
\right|
\leqslant\frac{2^{|\bfk|}}{r!}
(1+\log N)^r.
\]
\end{lem}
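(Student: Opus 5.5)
The plan is to bound each factor pointwise, then compare the resulting symmetric sum with the $r$-th power of a single harmonic-type sum. For $s\in\mathcal L$ and an integer $n\geqslant1$ we have $|n+s|\geqslant n+\operatorname{Re}s> n-\tfrac12\geqslant \tfrac n2$. Hence for every $k\geqslant1$,
\[
\frac{1}{|n+s|^{k}}\leqslant \frac{2^{k}}{n^{k}}\leqslant \frac{2^{k}}{n}.
\]
Applying this to every factor of every summand of $\zeta_{(0,N)}(\bfk;s)$ gives
\[
\left|\zeta_{(0,N)}(\bfk;s)\right|
\leqslant 2^{|\bfk|}\sum_{0<n_1<\cdots<n_r<N}\frac{1}{n_1\cdots n_r}
=2^{|\bfk|}\,\zeta_{(0,N)}(\{1\}^r).
\]
This bound is uniform in $s\in\mathcal L$, since the constant $2^{|\bfk|}$ does not depend on $s$.

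Next we bound the finite sum $\zeta_{(0,N)}(\{1\}^r)$. It is the elementary symmetric sum of degree $r$ of the positive numbers $1/n$, $1\leqslant n\leqslant N-1$. Expanding $T_N^{\,r}=\bigl(\sum_{n=1}^{N-1}1/n\bigr)^r$, every product over an $r$-tuple of \emph{distinct} indices appears exactly $r!$ times, once for each ordering. The remaining terms, those with repeated indices, are nonnegative. Therefore
\[
\zeta_{(0,N)}(\{1\}^r)\leqslant \frac{T_N^{\,r}}{r!}.
\]

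It remains to show $T_N\leqslant 1+\log N$. For $N=1$ we have $T_N=0$, and for $N\geqslant2$ the integral comparison gives
\[
T_N=1+\sum_{n=2}^{N-1}\frac1n\leqslant 1+\int_1^{N-1}\frac{\de t}{t}\leqslant 1+\log N.
\]
Combining the three steps yields
\[
\left|\zeta_{(0,N)}(\bfk;s)\right|\leqslant \frac{2^{|\bfk|}}{r!}(1+\log N)^r,
\]
which is the claimed estimate.

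The edge cases are immediate. For $r=0$ both sides equal $1$ under the empty-sum convention, and for $N\leqslant r$ the sum is empty. There is no genuine obstacle in this argument. The only point needing care is the lower bound $|n+s|\geqslant n/2$ on the strip, which uses $n\geqslant1$ and $|\operatorname{Re}s|<\tfrac12$.
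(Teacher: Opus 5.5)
Your proof is correct and follows the same route as the paper: you use the pointwise bound $|n+s|\geqslant n/2$ on $\mathcal L$ to reduce to $2^{|\bfk|}\zeta_{(0,N)}(\{1\}^r)$, and then bound this by $T_N^{\,r}/r!$ and use $T_N\leqslant 1+\log N$. You also spell out the two elementary steps the paper leaves implicit, namely the $r!$ comparison with the expanded power and the integral bound for $T_N$.
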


\begin{proof}
For \(n\geqslant1\) and \(|\operatorname{Re}s|<1/2\),
\[
|n+s|
\geqslant
|\operatorname{Re}(n+s)|
\geqslant n-\frac12
\geqslant\frac n2.
\]
It follows that
\begin{align*}
\left|
\zeta_{(0,N)}(\bfk;s)
\right|
&\leqslant
2^{|\bfk|}
\sum_{0<n_1<\cdots<n_r<N}
\frac1{n_1\cdots n_r}\\
&\leqslant
\frac{2^{|\bfk|}}{r!}
\left(
\sum_{n=1}^{N-1}\frac1n
\right)^r
\leqslant\frac{2^{|\bfk|}}{r!}
(1+\log N)^r.
\end{align*}
\end{proof}

\begin{thm}[Asymptotic comparison]
\label{thm:regularized-Hurwitz-asymptotic}
Let $\bfk=(k_1,\cdots,k_r)$ be an index. As \(N\to+\infty\), we have
\begin{align}
\zeta_{(0,N)}(\bfk;s)
=
\zeta_*^{T_N-H(s)}(\bfk;s)
+
O_{\bfk,K}
\left(
\frac{(1+\log N)^{r-1}}{N}
\right)
\label{eq:regularized-Hurwitz-asymptotic}
\end{align}
uniformly for \(s\) in every compact subset \(K\subset\mathcal{L}\) .
\end{thm}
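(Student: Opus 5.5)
The truncation map $\bfk\mapsto\zeta_{(0,N)}(\bfk;s)$ is itself a stuffle homomorphism on $\mathfrak H_*^1$, since finite sums over $0<n<N$ satisfy the stuffle product exactly. So I would apply it to the decomposition of Lemma~\ref{lem:stuffle-decomposition}. Writing $\bfk=\sum_{l=0}^r\boldsymbol u_l*(1)^{*l}$ gives the exact identity
\[
\zeta_{(0,N)}(\bfk;s)=\sum_{l=0}^{r}\zeta_{(0,N)}(\boldsymbol u_l;s)\,\zeta_{(0,N)}(1;s)^l ,
\]
where $\zeta_{(0,N)}$ is extended linearly to $\mathfrak H^0_*$. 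The target is
\[
\zeta_*^{T_N-H(s)}(\bfk;s)=\sum_l\zeta(\boldsymbol u_l;s)\,(T_N-H(s))^l .
\]
The proof therefore reduces to two one-variable comparisons, followed by bookkeeping of the error in powers of $\log N$.

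\textbf{Step 1: the depth-one factor.} Since $H(s)=\sum_{n\geqslant1}(1/n-1/(n+s))$, we have
\[
\zeta_{(0,N)}(1;s)-(T_N-H(s))=\sum_{n\geqslant N}\Bigl(\frac1n-\frac1{n+s}\Bigr)=\sum_{n\geqslant N}\frac{s}{n(n+s)} .
\]
For $s\in K$ with $|n+s|\geqslant n/2$, this is $O_K(1/N)$. Also $T_N-H(s)=O_K(1+\log N)$, because $H$ is continuous on $K$; note that $H$ is holomorphic for $\operatorname{Re}s>-1$. Combining these with the binomial expansion and Lemma~\ref{lem:finite-Hurwitz-growth} (depth one) gives
\[
\zeta_{(0,N)}(1;s)^l-(T_N-H(s))^l=O_K\bigl((1+\log N)^{l-1}/N\bigr).
\]

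\textbf{Step 2: admissible tails.} For an admissible index $\boldsymbol a$ of depth $d$, I need
\[
\zeta(\boldsymbol a;s)-\zeta_{(0,N)}(\boldsymbol a;s)=O_{\boldsymbol a,K}\bigl((1+\log N)^{d-1}/N\bigr).
\]
The difference is the sum over $0<n_1<\cdots<n_d$ with $n_d\geqslant N$. I would bound it by $2^{|\boldsymbol a|}\sum_{n_d\geqslant N}n_d^{-a_d}\sum_{n_1<\cdots<n_{d-1}<n_d}(n_1\cdots n_{d-1})^{-1}$. The inner sum is at most $(1+\log n_d)^{d-1}$ by the same argument as Lemma~\ref{lem:finite-Hurwitz-growth}. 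Since $a_d\geqslant2$, the tail $\sum_{n\geqslant N}(1+\log n)^{d-1}n^{-2}$ is $O((1+\log N)^{d-1}/N)$, for example by comparison with an integral.

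\textbf{Step 3: assembly.} For each $l$, I would split
\[
\zeta_{(0,N)}(\boldsymbol u_l)\zeta_{(0,N)}(1)^l-\zeta(\boldsymbol u_l)(T_N-H)^l
=\bigl(\zeta_{(0,N)}(\boldsymbol u_l)-\zeta(\boldsymbol u_l)\bigr)\zeta_{(0,N)}(1)^l+\zeta(\boldsymbol u_l)\bigl(\zeta_{(0,N)}(1)^l-(T_N-H)^l\bigr).
\]
Every index in $\boldsymbol u_l$ has depth $d\leqslant r-l$. The first term is therefore $O((1+\log N)^{r-l-1}/N)\cdot O((1+\log N)^l)$, using Lemma~\ref{lem:finite-Hurwitz-growth} for the power of the depth-one sum. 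The second term uses boundedness of $\zeta(\boldsymbol a;s)$ on $K$, which holds because the series converges uniformly there by the Step 2 bound with $N=1$, together with Step 1.

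\textbf{Main obstacle.} The delicate point is the exponent bookkeeping: the log powers must total exactly $r-1$. For $l=0$, or when the $\boldsymbol u_l$-term has depth $0$ (the empty index, where the first difference vanishes), the count must be checked separately. Both cases fit the claimed bound, since $d-1+l\leqslant r-1$ whenever $d\geqslant1$, and the Step 1 error is $(1+\log N)^{l-1}$ with $l\leqslant r$. Uniformity in $s\in K$ comes from the constants depending only on $\sup_K|s|$ and $\sup_K|H|$.
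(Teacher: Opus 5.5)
Your proposal is correct and takes essentially the same route as the paper. Both arguments apply the stuffle homomorphism $\zeta_{(0,N)}(\,\cdot\,;s)$ to the decomposition of Lemma~\ref{lem:stuffle-decomposition}, handle the depth-one factor via $\sum_{n\geqslant N}s/(n(n+s))=O_K(N^{-1})$, bound admissible tails by $O((1+\log N)^{d-1}/N)$ using Lemma~\ref{lem:finite-Hurwitz-growth}, and then assemble; your two-term splitting in Step~3 just spells out the logarithmic bookkeeping that the paper compresses into one line.
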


\begin{proof}
First,
\begin{align}
\zeta_{(0,N)}(1;s)
=
T_N-H(s)+O_K(N^{-1}),
\label{eq:depth-one-Hurwitz-asymptotic}
\end{align}
uniformly for \(s\in K\). Indeed,
\[
\zeta_{(0,N)}(1;s)-T_N
=
-\sum_{n=1}^{N-1}
\left(
\frac1n-\frac1{n+s}
\right),
\]
and hence
\[
\zeta_{(0,N)}(1;s)-(T_N-H(s))
=
\sum_{n=N}^{\infty}
\frac{s}{n(n+s)}
=
O_K(N^{-1}).
\]
Next, let $\boldsymbol{a}=(a_1,\cdots,a_d)$ be a nonempty admissible index. By
Lemma~\ref{lem:finite-Hurwitz-growth},
\begin{align*}
&
\left|
\zeta(\boldsymbol{a};s)
-
\zeta_{(0,N)}(\boldsymbol{a};s)
\right|\\
&\quad\leqslant
\sum_{n=N}^{\infty}
\frac{
\left|
\zeta_{(0,n)}(a_1,\cdots,a_{d-1};s)
\right|
}{
|n+s|^{a_d}
}\\
&\quad\leqslant\frac{2^{|\boldsymbol{a}|}}{(d-1)!}
\sum_{n=N}^{\infty}
\frac{(1+\log n)^{d-1}}{n^{a_d}}
\leqslant c_{\boldsymbol{a},K}
\frac{(1+\log N)^{d-1}}{N},
\end{align*}
for some constant $c_{\boldsymbol{a},K}>0$, since \(a_d\geqslant2\). Thus
\begin{align}
\zeta_{(0,N)}(\boldsymbol{a};s)
=
\zeta(\boldsymbol{a};s)
+
O_{\boldsymbol{a},K}
\left(
\frac{(1+\log N)^{d-1}}N
\right).
\label{eq:admissible-Hurwitz-tail}
\end{align}
Now take the stuffle decomposition
\[
\bfk
=
\sum_{\ell=0}^{r}
\boldsymbol{u}_l*(1)^{*l}
\]
from Lemma~\ref{lem:stuffle-decomposition}. Since \(\zeta_{(0,N)}(-;s)\) is a
stuffle homomorphism,
\[
\zeta_{(0,N)}(\bfk;s)
=
\sum_{l=0}^{r}
\zeta_{(0,N)}(\boldsymbol{u}_l;s)
(\zeta_{(0,N)}(1;s))^l.
\]
On the other hand,
\[
\zeta_*^{T_N-H(s)}(\bfk;s)
=
\sum_{\ell=0}^{r}
\zeta(\boldsymbol{u}_\ell;s)
\bigl(T_N-H(s)\bigr)^\ell.
\]
Every admissible index occurring in
\(\boldsymbol{u}_\ell\) has depth at most \(r-\ell\).
Combining
\eqref{eq:depth-one-Hurwitz-asymptotic} and
\eqref{eq:admissible-Hurwitz-tail}, and using
\[
T_N-H(s)=O_K(1+\log N),
\qquad
\zeta_{(0,N)}(1;s)=O_K(1+\log N),
\]
we obtain
\[
\zeta_{(0,N)}(\boldsymbol{u}_l;s)
(\zeta_{(0,N)}(1;s))^l
-
\zeta(\boldsymbol{u}_l;s)
\bigl(T_N-H(s)\bigr)^l
=
O_{\bfk,K}
\left(
\frac{(1+\log N)^{r-1}}N
\right).
\]
Summing over \(0\leqslant l\leqslant r\) proves
\eqref{eq:regularized-Hurwitz-asymptotic}.
\end{proof}

\begin{cor}[Negative truncation]
\label{cor:negative-Hurwitz-asymptotic}
Under the assumptions of
Theorem~\ref{thm:regularized-Hurwitz-asymptotic},
\begin{align}
\zeta_{(-N,0)}(\bfk;s)
=
(-1)^{|\bfk|}
\zeta_*^{T_N-H(-s)}
(\overleftarrow{\bfk};-s)
+
O_{\bfk,K}
\left(
\frac{(1+\log N)^{r-1}}N
\right).
\label{eq:negative-Hurwitz-asymptotic}
\end{align}
\end{cor}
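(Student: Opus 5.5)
The plan is to reduce the negative truncation to the positive one by the substitution $n_i\mapsto -n_{r+1-i}$, and then apply Theorem~\ref{thm:regularized-Hurwitz-asymptotic} at the reflected point $-s$.

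\textbf{Step 1 (reindexing).} In
\[
\zeta_{(-N,0)}(\bfk;s)=\sum_{-N<n_1<\cdots<n_r<0}\prod_{i=1}^r (n_i+s)^{-k_i},
\]
set $m_i:=-n_{r+1-i}$. The condition $-N<n_1<\cdots<n_r<0$ becomes $0<m_1<\cdots<m_r<N$. Each factor transforms as
\[
(n_{r+1-i}+s)^{-k_{r+1-i}}=(-1)^{k_{r+1-i}}(m_i-s)^{-k_{r+1-i}}.
\]
Multiplying these factors gives the exact finite identity
\[
\zeta_{(-N,0)}(\bfk;s)=(-1)^{|\bfk|}\,\zeta_{(0,N)}(\overleftarrow{\bfk};-s).
\]

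\textbf{Step 2 (apply the theorem).} The strip $\mathcal L$ is invariant under $s\mapsto -s$. Hence if $K\subset\mathcal L$ is compact, so is $-K=\{-s: s\in K\}$. Apply Theorem~\ref{thm:regularized-Hurwitz-asymptotic} to the index $\overleftarrow{\bfk}$, which has the same depth $r$ and weight $|\bfk|$, uniformly for $-s\in -K$. This gives
\[
\zeta_{(0,N)}(\overleftarrow{\bfk};-s)=\zeta_*^{T_N-H(-s)}(\overleftarrow{\bfk};-s)+O_{\overleftarrow{\bfk},-K}\!\left(\frac{(1+\log N)^{r-1}}{N}\right).
\]
Multiplying by $(-1)^{|\bfk|}$ does not change the size of the error term. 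Since the implied constant depends only on $\bfk$ and $K$, we obtain \eqref{eq:negative-Hurwitz-asymptotic}.

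\textbf{Main obstacle.} The only point needing care is that $H(-s)$ is well defined and bounded on $-K$. This holds because $\operatorname{Re}(-s)>-1/2>-1$ there, which is already covered by the hypotheses of the theorem. Beyond this check, the argument is purely bookkeeping of the reversal of indices and of the sign.
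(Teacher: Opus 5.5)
Your proof is correct and follows the paper's argument exactly. The reindexing $m_\nu=-n_{r+1-\nu}$ gives the exact identity $\zeta_{(-N,0)}(\bfk;s)=(-1)^{|\bfk|}\zeta_{(0,N)}(\overleftarrow{\bfk};-s)$, and Theorem~\ref{thm:regularized-Hurwitz-asymptotic} is then applied at $-s$; your remark that $-K$ is again a compact subset of $\mathcal L$ is a detail the paper leaves implicit.
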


\begin{proof}
The change of variables
\[
m_\nu=-n_{r+1-\nu}
\]
gives the exact identity
\[
\zeta_{(-N,0)}(\bfk;s)
=
(-1)^{|\bfk|}
\zeta_{(0,N)}(\overleftarrow{\bfk};-s).
\]
The result therefore follows from
Theorem~\ref{thm:regularized-Hurwitz-asymptotic}.
\end{proof}

\begin{exa}
For \(\bfk=(2,1)\), Example~\ref{ex:regularization-21} and
Theorem~\ref{thm:regularized-Hurwitz-asymptotic} give
\[
\begin{aligned}
\zeta_{(0,N)}(2,1;s)
={}&
\bigl(T_N-H(s)\bigr)\zeta(2;s)
-\zeta(1,2;s)-\zeta(3;s)\\
&+
O_{K}\left(\frac{1+\log N}{N}\right).
\end{aligned}
\]
\end{exa}

\subsection{Local expansion and vertical decay}

\begin{lem}[Local Taylor expansion]
\label{lem:local-Hurwitz-expansion}
Let $\bfk=(k_1,\cdots,k_r)$ be an index. For \(|s|<1/2\),
\begin{align}
\zeta_*^{T-H(s)}(\bfk;s)
=
\sum_{m=0}^{\infty}
\zeta_{*,m}^{T}(\bfk)s^m.
\label{eq:local-Hurwitz-expansion}
\end{align}
The series converges uniformly in \(|s|<1/2\).
\end{lem}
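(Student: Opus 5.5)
The plan is to derive the expansion from the exact Taylor expansion of the finite truncations, then pass to the limit $N\to+\infty$ using Theorem~\ref{thm:regularized-Hurwitz-asymptotic} and Cauchy estimates, and finally remove the dependence on $T_N$ with a polynomial-identity argument.

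First I would establish analyticity and the polynomial structure in $T$. Each $\zeta(\boldsymbol a;s)$ with $\boldsymbol a$ admissible is holomorphic in $|s|<1$, since the series converges locally uniformly there. The function $H(s)$ is holomorphic for $\operatorname{Re}s>-1$. By \eqref{eq:regularization-polynomial-form} and the binomial expansion of $(T-H(s))^l$, the left-hand side of \eqref{eq:local-Hurwitz-expansion} is therefore $\sum_{j}T^j F_j(s)$, with each $F_j$ holomorphic in $|s|<1$. Its Taylor series $\sum_m c_m(T)s^m$ thus converges uniformly on $|s|\leqslant 1/2$, and each coefficient $c_m(T)$ is a polynomial in $T$ of degree at most $r$. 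The right-hand side coefficients $\zeta^T_{*,m}(\bfk)$ are also polynomials in $T$, by Definition~\ref{def:stuffle-regularization}. It therefore suffices to prove $c_m(T)=\zeta^T_{*,m}(\bfk)$ as polynomials, for each $m$.

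Second, I would use the exact expansion of the truncation. For $n\geqslant1$ and $|s|<1$ we have $(n+s)^{-k}=\sum_{\lambda\ge0}\binom{-k}{\lambda}n^{-k-\lambda}s^\lambda$. Multiplying these finitely many absolutely convergent series gives
\[
\zeta_{(0,N)}(\bfk;s)=\sum_{m\ge0}\zeta_{(0,N),m}(\bfk)\,s^m .
\]
Fix a radius $\rho<1/2$ and let $K=\{|s|\leqslant\rho\}\subset\mathcal L$. By Theorem~\ref{thm:regularized-Hurwitz-asymptotic}, the holomorphic function $E_N(s):=\zeta_{(0,N)}(\bfk;s)-\zeta_*^{T_N-H(s)}(\bfk;s)$ is $O((1+\log N)^{r-1}/N)$ uniformly on $K$. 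Cauchy's estimate then gives, for each fixed $m$,
\[
\zeta_{(0,N),m}(\bfk)-c_m(T_N)=O_{m}\bigl((1+\log N)^{r-1}N^{-1}\rho^{-m}\bigr)\longrightarrow0 .
\]
On the other hand, $\zeta_{(0,N),m}(\bfk)$ is a finite combination of the values $\zeta_{(0,N)}(k_1+\lambda_1,\dots,k_r+\lambda_r)$. Applying Theorem~\ref{thm:regularized-Hurwitz-asymptotic} at $s=0$, where $H(0)=0$, to these finitely many indices gives $\zeta_{(0,N),m}(\bfk)-\zeta^{T_N}_{*,m}(\bfk)\to0$.

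Third, I would conclude with the polynomial identity. The polynomial $P_m(T):=c_m(T)-\zeta^T_{*,m}(\bfk)$ satisfies $P_m(T_N)\to0$ while $T_N\to+\infty$. A polynomial that tends to $0$ along a sequence going to infinity must be constant zero; otherwise its leading term would dominate. Hence $P_m\equiv0$, which gives \eqref{eq:local-Hurwitz-expansion} together with the uniform convergence established in the first step.

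The main delicate point is that the error constant in Theorem~\ref{thm:regularized-Hurwitz-asymptotic} depends on the index, so one cannot naively sum the errors over all shifted indices $\bfk+\boldsymbol\lambda$. Working coefficient by coefficient, through the Cauchy estimate on a fixed disc and then the polynomial argument, avoids any need for uniformity in $m$.
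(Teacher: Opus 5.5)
Your proof is correct, but it takes a different route from the paper's.

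The paper's argument is algebraic. It forms $\Phi_s^T(\bfk):=\sum_{m}\zeta_{*,m}^T(\bfk)s^m$ and uses the Vandermonde identity $\sum_{p+q=m}\binom{-a}{p}\binom{-b}{q}=\binom{-(a+b)}{m}$ to show that $\Phi_s^T$ is a stuffle homomorphism. It then checks that $\Phi_s^T$ agrees with $\zeta(\cdot\,;s)$ on admissible indices, by the binomial expansion. It also checks that $\Phi_s^T$ sends $(1)$ to $T-H(s)$, using $H(s)=\sum_{m\geqslant1}(-1)^{m-1}\zeta(m+1)s^m$. The conclusion follows from uniqueness of the regularized extension.

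You instead take the stuffle compatibility from the finite truncations, where the Taylor expansion is an exact finite identity. You transfer it to the regularized side through three steps:
\begin{enumerate}
\item Theorem~\ref{thm:regularized-Hurwitz-asymptotic};
\item Cauchy estimates on a fixed disc $|s|\leqslant\rho<1/2$, applied coefficient by coefficient so that no uniformity in $m$ is needed;
\item the ``polynomial in $T_N$ tending to zero'' device, which the paper itself uses later for compatibility, periodicity and the reduction theorem.
\end{enumerate}

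What each approach buys:
\begin{itemize}
\item The paper's route is independent of the asymptotic comparison. It also explains conceptually why the coefficients are the shifted values $\zeta_{*,m}^T$: the Taylor-shift map respects the stuffle product.
\item Your route never needs the homomorphism property of the shift map. It is also more careful on the analytic side. Holomorphy of the left-hand side on $|s|<1$ immediately gives convergence of the Taylor series and uniformity on $|s|\leqslant 1/2$.
\item In the paper's argument, convergence of $\Phi_s^T(\bfk)$ for non-admissible $\bfk$ is only implicit. It does follow from the homomorphism identity at the level of formal power series, but the paper does not spell this out.
\end{itemize}
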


\begin{proof}
Define
\[
\Phi_s^T(\bfk)
:=
\sum_{m=0}^{\infty}
\zeta_{*,m}^{T}(\bfk)s^m.
\]
The Vandermonde identity
\[
\sum_{p+q=m}
\binom{-a}{p}\binom{-b}{q}
=
\binom{-(a+b)}m
\]
shows that \(\Phi_s^T\) is a stuffle homomorphism. If \(\boldsymbol{a}\) is admissible, the binomial expansion gives
\[
\Phi_s^T(\boldsymbol{a})
=
\zeta(\boldsymbol{a};s),
\qquad |s|<1.
\]
Moreover,
\begin{align*}
\Phi_s^T(1)
&=
T+
\sum_{m=1}^{\infty}
(-1)^m\zeta(m+1)s^m\\
&=
T-H(s),
\end{align*}
because
\[
H(s)
=
\sum_{m=1}^{\infty}
(-1)^{m-1}\zeta(m+1)s^m,
\qquad |s|<1.
\]
By the uniqueness in
Definition~\ref{def:stuffle-regularization}, we conclude that
\[
\Phi_s^T(\bfk)
=
\zeta_*^{T-H(s)}(\bfk;s).
\]
This proves \eqref{eq:local-Hurwitz-expansion}.
\end{proof}

\begin{lem}[Decay of admissible Hurwitz zeta functions]
\label{lem:Hurwitz-vertical-decay}
Let $\bfk=(k_1,\cdots,k_r)$ be a nonempty admissible index, and let \(p\in\mathbb{Z}_{\geqslant0}\). Then
\begin{align}
\zeta(\bfk;iy)H(iy)^p
=
O_{\bfk,p}
\left(
|y|^{1-k_r}
(1+\log|y|)^{r+p-1}
\right)
\label{eq:Hurwitz-vertical-decay}
\end{align}
as \(|y|\to\infty\). In particular,
\[
\lim_{y\to\pm\infty}
\zeta(\bfk;iy)H(iy)^p=0.
\]
\end{lem}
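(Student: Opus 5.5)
The plan is to bound $\zeta(\bfk;iy)$ directly by inserting the finite-sum growth estimate into the last summation variable, and then to deal with the factor $H(iy)^p$ on its own. Write $s=iy$. For every $n\geqslant1$ we have $|n+iy|=\sqrt{n^2+y^2}$, so $|n+iy|\geqslant n$ and also $|n+iy|\geqslant\max(n,|y|)$. Split off the last variable:
\[
\zeta(\bfk;iy)=\sum_{n=1}^{\infty}\frac{\zeta_{(0,n)}(k_1,\cdots,k_{r-1};iy)}{(n+iy)^{k_r}}.
\]
Since $iy\in\mathcal L$, Lemma~\ref{lem:finite-Hurwitz-growth} applies uniformly in $y$. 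It gives $|\zeta_{(0,n)}(\bfk_{[1,r)};iy)|\leqslant C_{\bfk}(1+\log n)^{r-1}$, and the same bound holds in the sharper form with $|n+iy|\geqslant n$ in place of $n/2$. When $r=1$ the numerator is just $1$.

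Next, split the $n$-sum at $n=|y|$. For $n\leqslant|y|$, use $|n+iy|\geqslant|y|$ and $\log n\leqslant\log|y|$; this part contributes at most
\[
C\,|y|\cdot|y|^{-k_r}(1+\log|y|)^{r-1}.
\]
For $n>|y|$, use $|n+iy|\geqslant n$. Since $k_r\geqslant2$, the tail $\sum_{n>|y|}(1+\log n)^{r-1}n^{-k_r}$ is $O(|y|^{1-k_r}(1+\log|y|)^{r-1})$, by comparison with an integral, exactly as in the tail estimate \eqref{eq:admissible-Hurwitz-tail}. Together these give
\[
\zeta(\bfk;iy)=O_{\bfk}\bigl(|y|^{1-k_r}(1+\log|y|)^{r-1}\bigr).
\]

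It then remains to show $H(iy)=O(1+\log|y|)$. I would use the series $H(s)=\sum_{n\geqslant1}\bigl(\frac1n-\frac1{n+s}\bigr)=\sum_{n\geqslant1}\frac{s}{n(n+s)}$ and split again at $n=|y|$. For $n\leqslant|y|$, each term has modulus at most $|y|/(n\cdot|y|)=1/n$, so this part is $O(1+\log|y|)$. For $n>|y|$, each term has modulus at most $|y|/n^2$, so this part is $O(1)$. Alternatively one can cite $\psi(1+s)=\log s+O(1/|s|)$. Raising to the $p$-th power and multiplying by the previous bound yields \eqref{eq:Hurwitz-vertical-decay}. The limit statement follows because $1-k_r\leqslant-1$ and a negative power of $|y|$ beats any power of $\log|y|$.

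The main point needing care is the uniformity of the finite-sum bound along the whole imaginary axis. This is fine because Lemma~\ref{lem:finite-Hurwitz-growth} is uniform on all of $\mathcal L$, not just on compact subsets. The rest is routine.
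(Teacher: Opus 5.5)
Your proposal is correct and is essentially the paper's proof: you separate the last summation variable, bound the prefix by Lemma~\ref{lem:finite-Hurwitz-growth} (uniform on $\mathcal L$), and split the $n$-sum at $n=|y|$ using $|n+iy|=(n^2+y^2)^{1/2}\geqslant\max(n,|y|)$, exactly as the paper does. The only difference is that you get $H(iy)=O(1+\log|y|)$ directly from the series $\sum_{n\geqslant1}s/(n(n+s))$ split at $n=|y|$, while the paper quotes the digamma asymptotic $H(iy)=\log(iy)+\gamma+O(|y|^{-1})$; your version is a harmless and slightly more self-contained substitute.
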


\begin{proof}
The classical asymptotic expansion of the digamma function gives
\[
H(iy)=\log(iy)+\gamma+O(|y|^{-1}),
\]
and hence
\[
H(iy)^p=O_p((1+\log|y|)^p).
\]
Separating the last summation variable and using
\[
|n+iy|\geqslant n,
\]
we obtain
\[
|\zeta(\bfk;iy)|
\leqslant c_{\bfk}\cdot
\sum_{n=1}^{\infty}
\frac{(1+\log n)^{r-1}}
{(n^2+|y|^2)^{k_r/2}},
\]
for some constant $c_{\bfk}>0$. If \(1\leqslant n\leqslant |y|\),
\[
(n^2+|y|^2)^{-k_r/2}\leqslant |y|^{-k_r},
\]
so
\[
\sum_{1\leqslant n\leqslant |y|}
\frac{(1+\log n)^{r-1}}
{(n^2+|y|^2)^{k_r/2}}
\leqslant c\cdot
|y|^{1-k_r}(1+\log |y|)^{r-1}.
\]
for some constant $c_1>0$. If \(n>|y|\),
\[
(n^2+|y|^2)^{-k_r/2}\leqslant n^{-k_r},
\]
and therefore
\[
\sum_{n>|y|}
\frac{(1+\log n)^{r-1}}
{(n^2+|y|^2)^{k_r/2}}
\leqslant c_2\cdot
|y|^{1-k_r}(1+\log |y|)^{r-1}.
\]
for some constant $c_2>0$. This proves
\[
\zeta(\bfk;iy)
=
O_{\bfk}
\left(
|y|^{1-k_r}(1+\log |y|)^{r-1}
\right),
\]
and \eqref{eq:Hurwitz-vertical-decay} follows.
\end{proof}

\begin{cor}[Decay of regularized Hurwitz zeta functions]
\label{cor:regularized-Hurwitz-vertical-decay}
Let $
\bfk=(k_1,\cdots,k_r)$ be an index and assume that \(k_j\geqslant2\) for at least one \(j\in\{1,\cdots,r\}\). Then
\[
\lim_{y\to\pm\infty}
\zeta_*^{T-H(iy)}(\bfk;iy)=0.
\]
More precisely, there exists \(A_{\bfk}\geqslant0\) such that
\[
\zeta_*^{T-H(iy)}(\bfk;iy)
=
O_{\bfk}
\left(
\frac{(1+\log|y|)^{A_{\bfk}}}{|y|}
\right).
\]
\end{cor}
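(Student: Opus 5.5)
The plan is to expand \(\zeta_*^{T-H(iy)}(\bfk;iy)\) through the stuffle decomposition and to bound each resulting term with Lemma~\ref{lem:Hurwitz-vertical-decay}. The only term that could fail to decay is a constant one, and we rule it out by a weight argument.

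Write \(\bfk=\sum_{l=0}^{r}\boldsymbol{u}_l*(1)^{*l}\) as in Lemma~\ref{lem:stuffle-decomposition}. By Definition~\ref{def:stuffle-regularization} and \eqref{eq:regularization-polynomial-form}, this gives
\[
\zeta_*^{T-H(iy)}(\bfk;iy)=\sum_{l=0}^{r}\bigl(T-H(iy)\bigr)^l\sum_{\boldsymbol{a}}c_{l,\boldsymbol{a}}\,\zeta(\boldsymbol{a};iy),
\]
where each \(\boldsymbol{a}\) is admissible or empty. The first key step is to show that the empty index never occurs with nonzero coefficient. The decomposition preserves weight, and \((1)^{*l}\) is homogeneous of weight \(l\). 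Hence every index \(\boldsymbol{a}\) occurring in \(\boldsymbol{u}_l\) satisfies \(|\boldsymbol{a}|+l=|\bfk|\). If \(\boldsymbol{a}=\varnothing\), then \(l=|\bfk|\). But the hypothesis that some \(k_j\geqslant2\) gives \(|\bfk|\geqslant r+1>r\geqslant l\), a contradiction. So every \(\boldsymbol{a}\) with \(c_{l,\boldsymbol{a}}\neq0\) is nonempty and admissible, with last entry \(a_d\geqslant2\).

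The second step is the estimate itself. Expand \((T-H(iy))^l=\sum_{p=0}^{l}\binom{l}{p}T^{l-p}(-1)^pH(iy)^p\), and treat \(T\) as a fixed constant. For each nonempty admissible \(\boldsymbol{a}\) of depth \(d\leqslant r\), Lemma~\ref{lem:Hurwitz-vertical-decay} gives
\[
\zeta(\boldsymbol{a};iy)H(iy)^p=O\bigl(|y|^{1-a_d}(1+\log|y|)^{d+p-1}\bigr)=O\bigl(|y|^{-1}(1+\log|y|)^{d+p-1}\bigr),
\]
using \(a_d\geqslant2\). There are finitely many terms, and \(d+p-1\leqslant 2r\). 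Summing therefore yields the claimed bound with, for instance, \(A_{\bfk}=2r\), and the limit \(0\) follows at once.

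The main obstacle is the first step: making sure the weight-homogeneity claim really excludes a constant term. This is exactly where the hypothesis is needed. For \(\bfk=\{1\}^r\) one has \(\bfk=\frac1{r!}(1)^{*r}+\cdots\), and the nondecaying term \((T-H)^r/r!\) survives. So the argument must rest on \(|\bfk|>r\) together with the bound \(l\leqslant r\) from Lemma~\ref{lem:stuffle-decomposition}, and not on anything finer.
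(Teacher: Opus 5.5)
Your proof is correct and follows essentially the same route as the paper. You exclude the empty index through weight homogeneity, since $l=|\bfk|>r\geqslant l$ is impossible, and then bound each remaining term by Lemma~\ref{lem:Hurwitz-vertical-decay} using $a_{\rm last}\geqslant2$. The only cosmetic difference is that you expand $(T-H(iy))^l$ binomially and use the $H(iy)^p$ form of the lemma, whereas the paper bounds $T-H(iy)=O(1+\log|y|)$ directly.
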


\begin{proof}
By \eqref{eq:regularization-polynomial-form},
\[
\zeta_*^X(\bfk;s)
=
\sum_{l,\boldsymbol{a}}
c_{l,\boldsymbol{a}}
X^l\zeta(\boldsymbol{a};s),
\]
where every \(\boldsymbol{a}\) is admissible or empty.

We claim that no empty index occurs in this expansion. Indeed, if
\(\boldsymbol{a}=\varnothing\), homogeneity in the weight would imply
\[
l=|\bfk|.
\]
However, the degree in \(X\) is at most the depth \(r\), whereas the
assumption that some \(k_j\geqslant2\) implies
\[
|\bfk|>r.
\]
This is impossible.

Thus every \(\boldsymbol{a}\) occurring in the expansion is nonempty
and admissible. Substituting \(X=T-H(iy)\), and using
\[
T-H(iy)=O(1+\log|y|),
\]
together with
Lemma~\ref{lem:Hurwitz-vertical-decay}, we obtain
\[
(T-H(iy))^l\zeta(\boldsymbol{a};iy)
=
O_{\bfk}
\left(
|y|^{1-a_{\rm last}}
(1+\log|y|)^{A_{\bfk}}
\right).
\]
Since \(a_{\rm last}\geqslant2\), every term is
\[
O_{\bfk}
\left(
|y|^{-1}(1+\log|y|)^{A_{\bfk}}
\right).
\]
The sum is finite, and the conclusion follows.
\end{proof}

\section{Regularized Multitangent Functions}

In this section, we construct regularized multitangent functions from
the stuffle-regularized multiple Hurwitz zeta functions introduced in
the preceding section.

We begin with symmetric finite truncations and recall the ordinary
multitangent functions associated with convergent indices. By splitting
an integer chain according to the position of \(0\), we obtain an exact
factorization of a finite symmetric truncation into positive and
negative truncated multiple Hurwitz zeta sums. Replacing these
one-sided truncations by their stuffle-regularized asymptotic
expressions motivates the definition of the regularized multitangent
function
\[
\mathcal T_*^T(\bfk;s).
\]
We then prove that this function agrees with the ordinary multitangent
function in the convergent case, is \(1\)-periodic, and admits an
explicit Laurent expansion at every integer.

\subsection{Symmetric truncations and convergent multitangent functions}

\begin{defn}\label{def:symmetric-truncated-multitangent}
Let $\bfk=(k_1,\cdots,k_r)$ be an index. For \(N\in\mathbb Z_{>0}\) and \(s\in\mathbb C-\mathbb Z\),
define the symmetric truncated multitangent function by
\[
\mathcal T_N(\bfk;s)
:=
\sum_{-N<n_1<\cdots<n_r<N}
\frac{1}
{(n_1+s)^{k_1}\cdots(n_r+s)^{k_r}}.
\]
If $k_1,k_r\geqslant2$, we define the convergent multitangent function by
\[
\mathcal T(\bfk;s)
:=
\sum_{-\infty<n_1<\cdots<n_r<\infty}
\frac{1}
{(n_1+s)^{k_1}\cdots(n_r+s)^{k_r}}.
\]
For \(k\geqslant2\), the depth-one functions are
\[
\mathcal T(k;s)
=
\sum_{n\in\mathbb Z}\frac1{(n+s)^k}.
\]
For \(k=1\), we use the symmetric principal value
\[
\mathcal T(1;s)
:=
\lim_{N\to\infty}
\sum_{-N<n<N}\frac1{n+s}
=
\pi\cot(\pi s).
\]
\end{defn}

\begin{pro}\label{pro:convergent-multitangent}
Let $\bfk=(k_1,\ldots,k_r)$ be an index satisfy $k_1,k_r\geqslant2$.
Then the defining series of \(\mathcal T(\bfk;s)\) converges
absolutely and locally uniformly on
\(\mathbb C-\mathbb Z\). Consequently,
\(\mathcal T(\bfk;s)\) is meromorphic on \(\mathbb C\), with poles
contained in \(\mathbb Z\), and
\[
\mathcal T(\bfk;s+1)=\mathcal T(\bfk;s),
\]
for every \(s\in\mathbb C-\mathbb Z\).
\end{pro}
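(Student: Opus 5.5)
To prove absolute and locally uniform convergence, I would fix a compact set $K\subset\mathbb C-\mathbb Z$ and a bound $R$ with $|\operatorname{Re}s|,|\operatorname{Im}s|\leqslant R$ on $K$. Since $K$ avoids the integers, there is a constant $c_K>0$ with
\[
|n+s|\geqslant c_K\max(1,|n|)
\]
for all $n\in\mathbb Z$ and $s\in K$. Indeed, for $|n|\geqslant 2R+2$ one has $|n+s|\geqslant |n|/2$, while the finitely many remaining $n$ are handled by $\inf_{s\in K}|n+s|>0$. Writing $\langle n\rangle:=\max(1,|n|)$, each summand is then bounded uniformly on $K$ by
\[
c_K^{-|\bfk|}\prod_{\nu}\langle n_\nu\rangle^{-k_\nu}.
\]

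It therefore suffices to show
\[
\sum_{n_1<\cdots<n_r}\prod_\nu \langle n_\nu\rangle^{-k_\nu}<\infty
\]
whenever $k_1,k_r\geqslant2$; the Weierstrass M-test then gives absolute and locally uniform convergence. The case $r=1$ is immediate. For $r\geqslant2$, I would split each chain according to the position of $0$: a chain consists of a strictly negative block, possibly the entry $n_j=0$, and a strictly positive block. This splitting bounds the sum by finitely many products of the form
\[
\sum_{0<m_1<\cdots<m_p}\prod m_\nu^{-a_\nu}\cdot\sum_{0<n_1<\cdots<n_q}\prod n_\nu^{-b_\nu},
\]
where the positive block ends with $k_r\geqslant2$ and the negative block, after reflection, ends with $k_1\geqslant2$. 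The middle entry at $0$ contributes a factor of $1$. When a block is empty, its factor is $1$; this happens, for example, when all $n_\nu>0$. In that case the chain still ends with $k_r\geqslant2$, so it is admissible, and the symmetric case with $k_1$ is handled by reflection. Each nonempty factor is a convergent MZV-type sum with last entry $\geqslant2$. Its convergence follows from the tail estimate used in Theorem~\ref{thm:regularized-Hurwitz-asymptotic}, namely Lemma~\ref{lem:finite-Hurwitz-growth} together with $\sum_n(1+\log n)^{d-1}n^{-a_d}<\infty$.

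Holomorphy on $\mathbb C-\mathbb Z$ then follows from Weierstrass's theorem, since each summand is holomorphic there. For meromorphy at an integer $n_0$, I would take a small disc $D$ around $n_0$. The sum over chains not containing $-n_0$ converges uniformly on $D$, because the estimate above holds with $K=\overline D$ once the single value $n=-n_0$ is excluded. The chains containing $-n_0$ equal $(s-n_0)^{-k_j}$ times sums that are again uniformly convergent on $D$, by the same product bound. Hence the singularity at $n_0$ is at worst a pole of order $\leqslant\max_j k_j$. Periodicity follows by reindexing $n_\nu\mapsto n_\nu+1$, which is a bijection of the set of chains; absolute convergence permits the rearrangement.

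The only step requiring real care is the uniform lower bound $|n+s|\geqslant c_K\langle n\rangle$ combined with the block decomposition, where one must check that both boundary conditions $k_1\geqslant2$ and $k_r\geqslant2$ are used and suffice even when the chain lies entirely on one side of $0$.
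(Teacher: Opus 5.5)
Your proof is correct and complete. The paper gives no argument for this proposition (it is called elementary and left to the reader), and your route fits the paper's own toolkit: the uniform bound $|n+s|\geqslant c_K\max(1,|n|)$, the exact splitting of chains at the origin as in Lemma~\ref{lem:splitting-at-origin}, and the tail estimate from the proof of Theorem~\ref{thm:regularized-Hurwitz-asymptotic}.

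One point in the pole analysis deserves to be made explicit. When you fix $n_j=-n_0$ with $j=1$ or $j=r$, the remaining index $(k_2,\ldots,k_r)$ or $(k_1,\ldots,k_{r-1})$ may have a boundary entry equal to $1$. The constrained sum is nevertheless finite, because it equals $\langle n_0\rangle^{k_j}$ times a subsum of your convergent majorant. Equivalently, $(s-n_0)^{\max_j k_j}\mathcal T(\bfk;s)$ is a uniform limit of holomorphic functions on the disc, so the pole at $n_0$ has order at most $\max_j k_j$.
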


\begin{proof}
The proof is elementary and is left to the interested reader.
\end{proof}

\subsection{The regularized multitangent function}

The following elementary decomposition explains the occurrence of
positive and negative multiple Hurwitz zeta functions.

\begin{lem}[Splitting at the origin]
\label{lem:splitting-at-origin}
For every index $\bfk=(k_1,\cdots,k_r)$, every \(N\geqslant1\), and every \(s\in\mathbb C-\mathbb Z\), one
has
\begin{align}
\mathcal T_N(\bfk;s)
={}&
\sum_{j=0}^{r}
\zeta_{(-N,0)}
\bigl(\bfk_{[1,j]};s\bigr)
\zeta_{(0,N)}
\bigl(\bfk_{(j,r]};s\bigr)
\nonumber\\
&+
\sum_{j=1}^{r}
\frac1{s^{k_j}}
\zeta_{(-N,0)}
\bigl(\bfk_{[1,j)};s\bigr)
\zeta_{(0,N)}
\bigl(\bfk_{(j,r]};s\bigr).
\label{eq:splitting-at-origin}
\end{align}
Here the multiple Hurwitz zeta value of the empty index is understood
to be \(1\).
\end{lem}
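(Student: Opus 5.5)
The identity is a purely combinatorial rearrangement of a finite sum, so the plan is to classify each summand of \(\mathcal T_N(\bfk;s)\) by the position of \(0\) relative to the strictly increasing chain \(-N<n_1<\cdots<n_r<N\). Since the chain is strictly increasing, exactly one of two mutually exclusive situations occurs. In the first, \(0\) does not appear among the \(n_\nu\). Then there is a unique \(j\in\{0,\dots,r\}\) with \(n_1<\cdots<n_j<0<n_{j+1}<\cdots<n_r\). In the second, \(0=n_j\) for a unique \(j\in\{1,\dots,r\}\), and then \(n_1<\cdots<n_{j-1}<0<n_{j+1}<\cdots<n_r\). This partitions the index set of the finite sum into \(2r+1\) disjoint pieces, and I would sum over each piece separately.

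For a piece of the first kind with fixed \(j\), the constraint decouples into two independent chains:
\[
-N<n_1<\cdots<n_j<0 \quad\text{and}\quad 0<n_{j+1}<\cdots<n_r<N.
\]
The summand factors as
\[
\prod_{\nu\le j}(n_\nu+s)^{-k_\nu}\cdot\prod_{\nu>j}(n_\nu+s)^{-k_\nu}.
\]
The double sum therefore equals
\[
\zeta_{(-N,0)}(\bfk_{[1,j]};s)\,\zeta_{(0,N)}(\bfk_{(j,r]};s),
\]
directly by Definition~\ref{def:truncated-Hurwitz}. When \(j=0\) or \(j=r\), one chain is empty and contributes the factor \(1\) by the stated convention.

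For a piece of the second kind with fixed \(j\), the factor \((n_j+s)^{-k_j}\) becomes \(s^{-k_j}\). The remaining variables again range over two independent chains, in \((-N,0)\) with index \(\bfk_{[1,j)}\) and in \((0,N)\) with index \(\bfk_{(j,r]}\). This gives the term
\[
s^{-k_j}\,\zeta_{(-N,0)}(\bfk_{[1,j)};s)\,\zeta_{(0,N)}(\bfk_{(j,r]};s).
\]
Since \(s\notin\mathbb Z\), no denominator vanishes, and all sums are finite. No convergence issues arise, and adding the contributions of all pieces yields \eqref{eq:splitting-at-origin}.

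There is no genuine obstacle. The only points needing care are to check that the pieces are disjoint and exhaustive, which holds because the \(n_\nu\) are distinct and strictly increasing, and to handle the boundary cases \(j=0,r\) and \(j=1\) (where \(\bfk_{[1,1)}=\varnothing\)) consistently with the empty-index convention.
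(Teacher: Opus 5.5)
Your proof is correct. Partitioning the chains $-N<n_1<\cdots<n_r<N$ by whether $0$ lies strictly between $n_j$ and $n_{j+1}$ or equals some $n_j$ is exactly the ``splitting an integer chain according to the position of $0$'' that the paper describes, and it yields the $r+1$ product terms and the $r$ terms carrying $s^{-k_j}$ directly from Definition~\ref{def:truncated-Hurwitz}. The paper does not write this out and only cites \cite{LiCe2025}, Proposition~2.2, so your self-contained argument, including the empty-index cases $j=0$, $j=r$ and $j=1$, supplies precisely what is deferred there.
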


\begin{proof}
See \cite{LiCe2025}, proposition 2.2.
\end{proof}

The change of variables
\[
m_\nu=-n_{j+1-\nu}
\]
gives
\[
\zeta_{(-N,0)}
\bigl(\bfk_{[1,j]};s\bigr)
=
(-1)^{|\bfk_{[1,j]}|}
\zeta_{(0,N)}
\bigl(\overleftarrow{\bfk_{[1,j]}};-s\bigr).
\]
This and the asymptotic results of Section~2 motivate the following
definition.

\begin{defn}[Regularized multitangent function]
\label{def:regularized-multitangent}
Let $\bfk=(k_1,\cdots,k_r)$ be an index, and let \(T\) be a regularization parameter. We define
\begin{align}
\mathcal T_*^T(\bfk;s)
:={}&
\sum_{j=0}^{r}
(-1)^{|\bfk_{[1,j]}|}
\zeta_*^{T-H(-s)}
\bigl(\overleftarrow{\bfk_{[1,j]}};-s\bigr)
\zeta_*^{T-H(s)}
\bigl(\bfk_{(j,r]};s\bigr)
\nonumber\\
&+
\sum_{j=1}^{r}
\frac{(-1)^{|\bfk_{[1,j)}|}}{s^{k_j}}
\zeta_*^{T-H(-s)}
\bigl(\overleftarrow{\bfk_{[1,j)}};-s\bigr)
\zeta_*^{T-H(s)}
\bigl(\bfk_{(j,r]};s\bigr).
\label{eq:regularized-multitangent}
\end{align}
The empty-index factors in this formula are understood to be \(1\).

For every fixed \(\bfk\), the function
\(\mathcal T_*^T(\bfk;s)\) is a polynomial in \(T\) whose
coefficients are meromorphic functions of \(s\); in other words,
\[
\mathcal T_*^T(\bfk;s)\in\mathcal M(\mathbb C)[T].
\]
\end{defn}

\begin{re}
With the normalization \(T=0\), Definition
\ref{def:regularized-multitangent} agrees, up to the index and
notation conventions used here, with Bouillot's regularized
multitangent function \cite{Bouillot2014}.
\end{re}

Recall that
\[
T_N:=\sum_{n=1}^{N-1}\frac1n.
\]

\begin{thm}[Asymptotic expansion]
\label{thm:symmetric-truncation-asymptotic}
Let $\bfk=(k_1,\cdots,k_r)$ be an index, then, as \(N\to\infty\),
\begin{align}
\mathcal T_N(\bfk;s)
=
\mathcal T_*^{T_N}(\bfk;s)
+
O_{\bfk,K}
\left(
\frac{(1+\log N)^{r-1}}N
\right)
\label{eq:symmetric-truncation-asymptotic}
\end{align}
locally uniformly for \(s\) in every compact subset $K\subset\mathcal{L}$.
\end{thm}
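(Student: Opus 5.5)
The plan is to start from the exact identity of Lemma~\ref{lem:splitting-at-origin} and replace each one-sided truncation by its regularized asymptotic expression, term by term. Fix a compact set $K\subset\mathcal L$; we may also assume $K$ avoids $s=0$, or else note that both sides carry the same factor $s^{-k_j}$ in the second sum. For each $j$, Corollary~\ref{cor:negative-Hurwitz-asymptotic} applied to the index $\bfk_{[1,j]}$ of depth $j$ gives
\[
\zeta_{(-N,0)}(\bfk_{[1,j]};s)=(-1)^{|\bfk_{[1,j]}|}\zeta_*^{T_N-H(-s)}(\overleftarrow{\bfk_{[1,j]}};-s)+O\!\left(\frac{(1+\log N)^{j-1}}{N}\right).
\]
Here we use that $-s$ ranges over the compact set $-K\subset\mathcal L$. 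Theorem~\ref{thm:regularized-Hurwitz-asymptotic} applied to $\bfk_{(j,r]}$ of depth $r-j$ gives the analogous statement with error $O((1+\log N)^{r-j-1}/N)$. For empty indices both sides equal $1$ exactly, with no error.

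Next we multiply the two expansions. For factors $A_N=a_N+\alpha_N$ and $B_N=b_N+\beta_N$ we have
\[
A_NB_N-a_Nb_N=\alpha_N B_N+a_N\beta_N.
\]
So we need size bounds on the main factors. Lemma~\ref{lem:finite-Hurwitz-growth} gives $|B_N|\ll(1+\log N)^{r-j}$ uniformly on $\mathcal L$. For $a_N=\zeta_*^{T_N-H(-s)}(\cdots;-s)$ we use the polynomial form \eqref{eq:regularization-polynomial-form}: it is a polynomial of degree at most $j$ in $T_N-H(-s)=O_K(1+\log N)$, with coefficients $\zeta(\boldsymbol a;-s)$ that are continuous and hence bounded on $-K$. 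Thus $a_N=O_K((1+\log N)^{j})$. The two cross terms are then
\[
\alpha_N B_N=O\!\left(\frac{(1+\log N)^{j-1}}{N}\right)\cdot O\!\left((1+\log N)^{r-j}\right),\qquad
a_N\beta_N=O\!\left((1+\log N)^{j}\right)\cdot O\!\left(\frac{(1+\log N)^{r-j-1}}{N}\right),
\]
and each is $O((1+\log N)^{r-1}/N)$. When one of the two indices is empty, the corresponding error term vanishes, and the bound still holds.

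The second sum in \eqref{eq:splitting-at-origin} is handled the same way. The indices $\bfk_{[1,j)}$ and $\bfk_{(j,r]}$ have total depth $r-1$, so the error is even $O((1+\log N)^{r-2}/N)$. The prefactor $s^{-k_j}$ must be controlled on $K$. If $0\in K$, both sides have a pole at $0$, so we interpret the estimate after multiplying by $s^{\max k_j}$. Alternatively, we take $K$ to be compact in $\mathcal L-\{0\}$, which is the natural reading since $\mathcal T_N$ is only defined for $s\notin\mathbb Z$. Summing the finitely many terms and comparing with Definition~\ref{def:regularized-multitangent} at $T=T_N$ yields \eqref{eq:symmetric-truncation-asymptotic}.

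The main obstacle is bookkeeping rather than analysis. We must check that the logarithmic exponents add up to at most $r-1$ in every product. We also need the bound on the regularized factors to be uniform in $s\in K$, which requires continuity of the convergent Hurwitz values and of $H(\pm s)$ on $\pm K$. Finally, the point $s=0$ must be treated as described above.
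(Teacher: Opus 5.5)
Your proposal is correct and follows essentially the same route as the paper's proof. You split at the origin via Lemma~\ref{lem:splitting-at-origin}, replace each one-sided factor using Theorem~\ref{thm:regularized-Hurwitz-asymptotic} and Corollary~\ref{cor:negative-Hurwitz-asymptotic}, and bound the cross terms through the $O((1+\log N)^d)$ growth of depth-$d$ factors, which you justify more explicitly than the paper does via $A_NB_N-a_Nb_N=\alpha_NB_N+a_N\beta_N$. Your observation that $K$ must avoid $s=0$ (or the estimate must be read after multiplying by a suitable power of $s$) is a genuine point the paper leaves implicit, since the second-sum errors are in general of size $|s|^{-k_j}/N$ near $0$.
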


\begin{proof}
Starting from Lemma~\ref{lem:splitting-at-origin}, apply
Theorem~\ref{thm:regularized-Hurwitz-asymptotic} to every positive
factor and Corollary~\ref{cor:negative-Hurwitz-asymptotic} to every
negative factor.

For example,
\[
\begin{aligned}
\zeta_{(-N,0)}
\bigl(\bfk_{[1,j]};s\bigr)
={}&
(-1)^{|\bfk_{[1,j]}|}
\zeta_*^{T_N-H(-s)}
\bigl(\overleftarrow{\bfk_{[1,j]}};-s\bigr)\\
&+
O_{\bfk,K}
\left(
\frac{(1+\log N)^{j-1}}N
\right),
\end{aligned}
\]
and
\[
\begin{aligned}
\zeta_{(0,N)}
\bigl(\bfk_{(j,r]};s\bigr)
={}&
\zeta_*^{T_N-H(s)}
\bigl(\bfk_{(j,r]};s\bigr)\\
&+
O_{\bfk,K}
\left(
\frac{(1+\log N)^{r-j-1}}N
\right).
\end{aligned}
\]

A regularized Hurwitz zeta value associated with an index of depth
\(d\) grows at most like \(O_{\bfk,K}((1+\log N)^d)\) when its
regularization parameter is \(T_N-H(\pm s)\). Hence the error in the
product of a depth-\(j\) factor and a depth-\((r-j)\) factor is
\[
O_{\bfk,K}
\left(
\frac{(1+\log N)^{r-1}}N
\right).
\]
The terms in the second sum of
\eqref{eq:splitting-at-origin} contain total depth \(r-1\), so their
errors satisfy the same bound, or a stronger one. Since only finitely
many values of \(j\) occur, summing all these estimates gives
\eqref{eq:symmetric-truncation-asymptotic}.
\end{proof}

\begin{cor}[Compatibility with the convergent case]
\label{cor:compatibility-convergent-multitangent}
Let $\bfk=(k_1,\cdots,k_r)$ be an index, if $k_1,k_r\geqslant2$, then
\[
\mathcal T_*^T(\bfk;s)=\mathcal T(\bfk;s)
\]
for every \(T\) and every \(s\in\mathcal{L}\).
In particular, the left-hand side is independent of \(T\).
\end{cor}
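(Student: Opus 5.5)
The plan is to compare both sides through the symmetric truncation \(\mathcal T_N(\bfk;s)\), first on \(\mathcal L-\{0\}\), and then to remove the dependence on \(T\) by a polynomial argument. I will read the statement on \(\mathcal L\) as an identity of meromorphic functions, so it holds for \(s\in\mathcal L-\{0\}\), where both sides are defined.

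First I fix \(s\in\mathcal L-\{0\}\). By Proposition~\ref{pro:convergent-multitangent}, the defining series of \(\mathcal T(\bfk;s)\) converges absolutely when \(k_1,k_r\geqslant2\). The symmetric truncations \(\mathcal T_N(\bfk;s)\) are partial sums over an exhausting family of finite sets, so
\[
\lim_{N\to\infty}\mathcal T_N(\bfk;s)=\mathcal T(\bfk;s).
\]
On the other hand, Theorem~\ref{thm:symmetric-truncation-asymptotic}, applied with a small compact set \(K\ni s\) avoiding \(0\), gives
\[
\mathcal T_N(\bfk;s)=\mathcal T_*^{T_N}(\bfk;s)+O\bigl((1+\log N)^{r-1}/N\bigr).
\]
Combining the two yields
\[
\mathcal T_*^{T_N}(\bfk;s)\longrightarrow\mathcal T(\bfk;s)
\qquad (N\to\infty).
\]

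Next I use the fact that \(P(T):=\mathcal T_*^T(\bfk;s)\) is a polynomial in \(T\) of degree at most \(r\) with complex coefficients; this holds by Definition~\ref{def:regularized-multitangent}, since each factor is a polynomial in its parameter. We know that \(P(T_N)\) converges while \(T_N\to\infty\). A nonconstant polynomial satisfies \(|P(T)|\to\infty\) as \(|T|\to\infty\), so \(P\) must be constant, and its constant value is \(\lim_N P(T_N)=\mathcal T(\bfk;s)\). This gives both the identity and the independence of \(T\) for every \(s\in\mathcal L-\{0\}\). Since both sides are meromorphic, the identity extends to all of \(\mathcal L\) as meromorphic functions.

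I expect the main obstacle to be purely bookkeeping: checking that the error term in Theorem~\ref{thm:symmetric-truncation-asymptotic} is legitimate at points near the pole \(s=0\). This is handled by choosing \(K\) to be a small closed disc around \(s\) that stays away from \(0\).
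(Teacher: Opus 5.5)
Your proposal is correct and follows essentially the same route as the paper: you obtain $\mathcal T_N(\bfk;s)\to\mathcal T(\bfk;s)$ from absolute convergence, combine it with Theorem~\ref{thm:symmetric-truncation-asymptotic} to get $\mathcal T_*^{T_N}(\bfk;s)\to\mathcal T(\bfk;s)$, and conclude that a polynomial in $X$ with a finite limit along $T_N\to+\infty$ must be constant. Your extra care in excluding $s=0$ and choosing $K$ away from the origin, where the $s^{-k_j}$ terms would spoil uniformity, is a small but legitimate tightening of the paper's argument.
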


\begin{proof}
Fix \(s\in\mathcal{L}\), and put
\[
P_s(X):=\mathcal T_*^X(\bfk;s)\in\mathbb C[X].
\]
By Proposition~\ref{pro:convergent-multitangent},
\[
\mathcal T_N(\bfk;s)\longrightarrow\mathcal T(\bfk;s).
\]
On the other hand,
Theorem~\ref{thm:symmetric-truncation-asymptotic} gives
\[
P_s(T_N)
=
\mathcal T_N(\bfk;s)+o(1).
\]
Thus \(P_s(T_N)\) has a finite limit as \(N\to\infty\). Since
\[
T_N\longrightarrow+\infty,
\]
this is possible only if the polynomial \(P_s\) is constant. Its
constant value must be \(\mathcal T(\bfk;s)\). Hence
\[
\mathcal T_*^T(\bfk;s)=\mathcal T(\bfk;s)
\]
for every \(T\).
\end{proof}

\subsection{Periodicity}

\begin{pro}\label{pro:regularized-periodicity}
For every index \(\bfk\), we have
\[\boxed{
\mathcal T_*^T(\bfk;s+1)
=
\mathcal T_*^T(\bfk;s)}
\]
for all \(s\in\mathbb C-\mathbb Z\).
\end{pro}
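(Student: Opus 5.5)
The strategy is to compare the symmetric truncations at $s$ and $s+1$. Periodicity at the finite level fails only by boundary terms, and the asymptotic expansion of Theorem~\ref{thm:symmetric-truncation-asymptotic} turns these into an identity of polynomials in $T$. Throughout, $\mathcal T_*^T(\bfk;s)$ is a polynomial in $T$ whose coefficients are meromorphic in $s$, by Definition~\ref{def:regularized-multitangent}. Hence it suffices to prove the identity for $s$ in a small open set, for instance a compact disc $K$ with $K\subset\mathcal L$ and $K+1$ in a region where we also control the asymptotics. The identity then propagates to all of $\mathbb C-\mathbb Z$ by analytic continuation, coefficientwise in $T$.

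The first step is an exact finite identity. Shifting $n_\nu\mapsto n_\nu-1$ gives
\[
\mathcal T_N(\bfk;s+1)=\sum_{-N+1<n_1<\cdots<n_r<N+1}\frac{1}{(n_1+s)^{k_1}\cdots(n_r+s)^{k_r}}.
\]
The difference $\mathcal T_N(\bfk;s+1)-\mathcal T_N(\bfk;s)$ consists of chains with $n_r=N$ minus chains with $n_1=-N+1$; chains using both endpoints enter with both signs. Each such boundary term has a factor $(N+s)^{-k_r}$ or $(-N+1+s)^{-k_1}$. This factor is $O(1/N)$ times a truncated sum of depth at most $r-1$, which is $O((1+\log N)^{r-1})$ by Lemma~\ref{lem:finite-Hurwitz-growth}, applied to the translated ranges together with the obvious bound for the negative side. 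Thus
\[
\mathcal T_N(\bfk;s+1)-\mathcal T_N(\bfk;s)=O\!\left(\frac{(1+\log N)^{r-1}}{N}\right).
\]

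Next I would apply Theorem~\ref{thm:symmetric-truncation-asymptotic} at $s$ and at $s+1$. The obstacle is that the theorem is stated only in the strip $\mathcal L$, and $s+1\notin\mathcal L$. I would first try to note that the proof of Theorem~\ref{thm:regularized-Hurwitz-asymptotic} works on any compact set avoiding the poles. On such a set one has $|n+s|\geqslant cn$ for $n\geqslant n_0$, with finitely many exceptional terms handled separately. The expansions $\sum_{n<N}1/(n+s)=T_N-H(s)+O(N^{-1})$ and the tail estimates hold for all $s\notin-\mathbb Z_{>0}$. Likewise, Lemma~\ref{lem:splitting-at-origin} is exact for all $s$. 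Consequently Theorem~\ref{thm:symmetric-truncation-asymptotic} extends to compact subsets of $\mathbb C-\mathbb Z$.

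Alternatively, one can avoid the extension. Consider $s$ with $-1/2<\operatorname{Re}s<0$, so that $s+1/2\in\mathcal L$, and compare truncations shifted by half-steps. I regard the first route as cleaner.

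Granting the extension, set $P_s(X):=\mathcal T_*^X(\bfk;s+1)-\mathcal T_*^X(\bfk;s)\in\mathbb C[X]$. Combining the two asymptotic expansions with the boundary estimate gives $P_s(T_N)=O((1+\log N)^{r-1}/N)\to0$. Since $T_N\to+\infty$, the argument of Corollary~\ref{cor:compatibility-convergent-multitangent} first shows that $P_s$ is constant, and then that this constant is $0$. This yields the identity for every $T$ and every $s$ in the open set. Finally, analytic continuation of each $T$-coefficient extends it to all $s\in\mathbb C-\mathbb Z$.
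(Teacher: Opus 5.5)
Your proposal is correct and follows essentially the paper's route: you use the asymptotics of the truncations at $s$ and at $s+1$, and then the fact that a polynomial in $X$ tending to $0$ along $X=T_N\to+\infty$ vanishes identically. The differences are organizational. You bound the exact boundary terms of $\mathcal T_N(\bfk;s+1)-\mathcal T_N(\bfk;s)$, whereas the paper rewrites $\mathcal T_N(\bfk;s)$ as an asymmetric truncation at $s+1$, splits it with parameters $N\pm1$, and then replaces $T_{N\pm1}$ by $T_N$. You also explicitly justify extending Theorem~\ref{thm:symmetric-truncation-asymptotic} to compact subsets of $\mathbb C-\mathbb Z$, a step the paper's proof uses tacitly even though $s+1\notin\mathcal L$.
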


\begin{proof}
Fix \(s\in\mathbb C-\mathbb Z\), by Theorem~\ref{thm:symmetric-truncation-asymptotic},
\begin{align}
\mathcal T_N(\bfk;s)
=
\mathcal T_*^{T_N}(\bfk;s)
+
O_{\bfk,s}
\left(
\frac{(1+\log N)^{r-1}}N
\right).
\label{eq:periodicity-first-asymptotic}
\end{align}
Now make the change of variables
\[
m_i=n_i-1.
\]
Then
\[
\mathcal T_N(\bfk;s)
=
\sum_{-N-1<m_1<\cdots<m_r<N-1}
\frac1{(m_1+s+1)^{k_1}\cdots(m_r+s+1)^{k_r}}.
\]
Splitting this asymmetric truncation at \(0\), and applying the
one-sided asymptotic formulas with truncation parameters \(N+1\) and
\(N-1\), gives
\begin{align}
\mathcal T_N(\bfk;s)
=
\mathcal T_*^{T_N}(\bfk;s+1)
+
O_{\bfk,s}
\left(
\frac{(1+\log N)^{r-1}}N
\right).
\label{eq:periodicity-second-asymptotic}
\end{align}

We justify the use of the common parameter \(T_N\). Indeed,
\[
T_{N+1}-T_N=\frac1N,
\qquad
T_{N-1}-T_N=-\frac1{N-1}.
\]
For an index of depth \(d\), the regularized Hurwitz zeta function is
a polynomial of degree at most \(d\) in its regularization parameter.
Consequently, replacing \(T_{N+1}\) or \(T_{N-1}\) by \(T_N\)
changes such a factor by at most
\[
O_{\bfk,s}
\left(
\frac{(1+\log N)^{d-1}}N
\right),
\]
which is absorbed by the error in
\eqref{eq:periodicity-second-asymptotic}.

Subtracting
\eqref{eq:periodicity-first-asymptotic} and
\eqref{eq:periodicity-second-asymptotic}, we obtain
\[
\mathcal T_*^{T_N}(\bfk;s)-\mathcal T_*^{T_N}(\bfk;s+1)
=
O_{\bfk,s}
\left(
\frac{(1+\log N)^{r-1}}N
\right)
=o(1).
\]
The function
\[
Q_s(X):=\mathcal T_*^X(\bfk;s)-\mathcal T_*^X(\bfk;s+1)
\]
is a polynomial in \(X\), and \(T_N\to+\infty\). A nonzero
polynomial cannot tend to \(0\) along an unbounded sequence.
Therefore
\[
Q_s(X)\equiv0.
\]
Hence
\[
\mathcal T_*^T(\bfk;s+1)
=
\mathcal T_*^T(\bfk;s)
\]
for every \(T\).
\end{proof}

\subsection{Local Laurent expansion}

For \(m\geqslant0\), define
\begin{align}
A_m^T(\bfk)
:=
\sum_{j=0}^{r}
(-1)^{|\bfk_{[1,j]}|}
\sum_{\substack{a+b=m\\a,b\geqslant0}}
(-1)^a
\zeta_{*,a}^T
\bigl(\overleftarrow{\bfk_{[1,j]}}\bigr)
\zeta_{*,b}^T
\bigl(\bfk_{(j,r]}\bigr),
\label{eq:def-Am}
\end{align}
and, for \(1\leqslant j\leqslant r\),
\begin{align}
B_{j,m}^T(\bfk)
:=
(-1)^{|\bfk_{[1,j)}|}
\sum_{\substack{a+b=m\\a,b\geqslant0}}
(-1)^a
\zeta_{*,a}^T
\bigl(\overleftarrow{\bfk_{[1,j)}}\bigr)
\zeta_{*,b}^T
\bigl(\bfk_{(j,r]}\bigr).
\label{eq:def-Bjm}
\end{align}

\begin{lem}[Local Laurent expansion]
\label{lem:local-multitangent-expansion}
For every index \(\bfk\) and every
\[
0<|s|<\frac12,
\]
one has
\begin{align}
\mathcal T_*^T(\bfk;s)
=
\sum_{m=0}^{\infty}A_m^T(\bfk)s^m
+
\sum_{j=1}^{r}
\sum_{m=0}^{\infty}
B_{j,m}^T(\bfk)s^{m-k_j}.
\label{eq:local-multitangent-expansion}
\end{align}
The series converges locally uniformly on the punctured disc
\[
0<|s|<\frac12.
\]
\end{lem}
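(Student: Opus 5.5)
The plan is to substitute the local Taylor expansion of Lemma~\ref{lem:local-Hurwitz-expansion} into every factor of Definition~\ref{def:regularized-multitangent} and multiply the resulting power series. Fix \(0<|s|<1/2\). For a positive factor, Lemma~\ref{lem:local-Hurwitz-expansion} applied to \(\bfk_{(j,r]}\) gives
\[
\zeta_*^{T-H(s)}\bigl(\bfk_{(j,r]};s\bigr)=\sum_{b\geqslant0}\zeta_{*,b}^T\bigl(\bfk_{(j,r]}\bigr)s^b .
\]
For a negative factor, we apply the same lemma at the point \(-s\), which also satisfies \(|-s|<1/2\), to the reversed index \(\overleftarrow{\bfk_{[1,j]}}\). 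This gives
\[
\zeta_*^{T-H(-s)}\bigl(\overleftarrow{\bfk_{[1,j]}};-s\bigr)=\sum_{a\geqslant0}(-1)^a\zeta_{*,a}^T\bigl(\overleftarrow{\bfk_{[1,j]}}\bigr)s^a .
\]
The analogous expansion holds with \(\bfk_{[1,j)}\) in place of \(\bfk_{[1,j]}\). For empty indices, the convention \(\zeta_{*,m}^T(\varnothing)=\delta_{m,0}\) matches the empty factor \(1\), so the formulas also cover \(j=0\) and \(j=r\).

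Next comes the multiplication. For each \(j\), the product of the two series is their Cauchy product. This step is justified because both series are absolutely convergent power series in \(s\) on \(|s|<1/2\). The reason is that each one equals a function holomorphic on the disc \(|s|<1\) (at least on \(|s|<1/2\)): as in the proof of Lemma~\ref{lem:local-Hurwitz-expansion}, the coefficients come from binomial expansions of \((n+s)^{-k}\) for \(n\geqslant1\), together with the expansion of \(H\), whose radius is \(1\). The coefficient of \(s^m\) in the \(j\)-th product of the first sum is then \(\sum_{a+b=m}(-1)^a\zeta_{*,a}^T(\overleftarrow{\bfk_{[1,j]}})\zeta_{*,b}^T(\bfk_{(j,r]})\). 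Multiplying by the sign \((-1)^{|\bfk_{[1,j]}|}\) and summing over \(j\) gives exactly \(A_m^T(\bfk)\) from \eqref{eq:def-Am}. In the second sum, the same computation gives \(B_{j,m}^T(\bfk)\) from \eqref{eq:def-Bjm}, and multiplying by \(s^{-k_j}\) produces the terms \(s^{m-k_j}\). Since only finitely many \(j\) occur, summing yields \eqref{eq:local-multitangent-expansion}.

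The main obstacle is the convergence claim: one must ensure that the series in Lemma~\ref{lem:local-Hurwitz-expansion} converge absolutely, and not merely as stated, so that the Cauchy products are legitimate. I would argue as follows. Each function \(\zeta_*^{T-H(s)}(\boldsymbol{a};s)\) is a polynomial in \(T-H(s)\) with coefficients \(\zeta(\boldsymbol{u};s)\) for admissible \(\boldsymbol{u}\), and all of these are holomorphic on \(|s|<1\). Hence their Taylor series at \(0\) converge absolutely and uniformly on compact subsets of \(|s|<1\), in particular on \(|s|\leqslant\rho\) for any \(\rho<1\). By uniqueness of Taylor coefficients, these Taylor series coincide with \(\sum_m\zeta_{*,m}^T(\bfk)s^m\). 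Products of such series therefore converge absolutely and locally uniformly, and multiplying by the bounded factor \(s^{-k_j}\) on compact subsets of the punctured disc preserves local uniform convergence there.
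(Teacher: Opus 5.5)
Your proposal is correct and follows the paper's proof. Like the paper, you substitute the expansions of Lemma~\ref{lem:local-Hurwitz-expansion}, taken at $s$ and at $-s$, into Definition~\ref{def:regularized-multitangent}, form the Cauchy products, and read off $A_m^T(\bfk)$ and $B_{j,m}^T(\bfk)$. Your extra justification of absolute convergence, via holomorphy of $\zeta(\boldsymbol{u};s)$ and $H(s)$ on $|s|<1$ and uniqueness of Taylor coefficients, makes explicit what the paper covers only in a one-line remark about locally uniform convergence of the Cauchy products.
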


\begin{proof}
By Lemma~\ref{lem:local-Hurwitz-expansion},
\[
\zeta_*^{T-H(s)}(\boldsymbol{a};s)
=
\sum_{b=0}^{\infty}
\zeta_{*,b}^T(\boldsymbol{a})s^b,
\]
whereas replacing \(s\) by \(-s\) gives
\[
\zeta_*^{T-H(-s)}(\boldsymbol{a};-s)
=
\sum_{a=0}^{\infty}
(-1)^a
\zeta_{*,a}^T(\boldsymbol{a})s^a.
\]

Substituting these two expansions into
Definition~\ref{def:regularized-multitangent}, and applying the Cauchy
product formula, the first sum in
\eqref{eq:regularized-multitangent} becomes
\[
\sum_{m=0}^{\infty}A_m^T(\bfk)s^m,
\]
while its \(j\)-th term in the second sum becomes
\[
\sum_{m=0}^{\infty}
B_{j,m}^T(\bfk)s^{m-k_j}.
\]
Summing over \(j\) proves
\eqref{eq:local-multitangent-expansion}.

The power series from
Lemma~\ref{lem:local-Hurwitz-expansion} converge locally uniformly for
\(|s|<1/2\). Since all sums over \(j\) are finite, the corresponding
Cauchy products converge locally uniformly on the punctured disc.
\end{proof}

\begin{cor}[Principal part at the origin]
\label{cor:principal-part-multitangent}
The principal part of \(\mathcal T_*^T(\bfk;s)\) at \(s=0\) is
\begin{align}
\operatorname{PP}_{s=0}
\mathcal T_*^T(\bfk;s)
=
\sum_{j=1}^{r}
\sum_{m=0}^{k_j-1}
B_{j,m}^T(\bfk)s^{m-k_j}.
\label{eq:principal-part-multitangent}
\end{align}
In particular, the pole order at every integer is at most $\max_{1\leqslant j\leqslant r}\{k_j\}$.
\end{cor}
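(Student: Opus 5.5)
The principal part can be read off directly from the Laurent expansion of Lemma~\ref{lem:local-multitangent-expansion}. On the punctured disc \(0<|s|<1/2\) that expansion splits into two pieces. The first piece, \(\sum_{m\geqslant0}A_m^T(\bfk)s^m\), is a convergent power series and contributes nothing to the principal part. In the second piece, \(\sum_{j}\sum_{m\geqslant0}B_{j,m}^T(\bfk)s^{m-k_j}\), fix \(j\) and split the sum over \(m\) into \(0\leqslant m\leqslant k_j-1\) and \(m\geqslant k_j\). The terms with \(m\geqslant k_j\) carry nonnegative powers of \(s\) and form a convergent power series: it is \(s^{-k_j}\) times the tail of a series converging for \(|s|<1/2\), and that tail vanishes to order \(k_j\). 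The terms with \(0\leqslant m\leqslant k_j-1\) carry exactly the negative powers \(s^{-k_j},\dots,s^{-1}\). By uniqueness of Laurent expansions on a punctured disc, the principal part at \(0\) is the finite sum displayed in \eqref{eq:principal-part-multitangent}.

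For the statement about all integers, I would first note that \(\mathcal T_*^T(\bfk;s)\) is meromorphic; this is built into Definition~\ref{def:regularized-multitangent}. By Proposition~\ref{pro:regularized-periodicity} it is \(1\)-periodic on \(\mathbb C-\mathbb Z\), and therefore, by the identity theorem, on its whole domain of meromorphy. Consequently, for every \(n\in\mathbb Z\) the Laurent expansion at \(s=n\) is obtained from the one at \(0\) by the substitution \(s\mapsto s-n\). The largest negative power of \(s\) in \eqref{eq:principal-part-multitangent} is \(s^{-k_j}\) for some \(j\), so the pole order at \(0\), and hence at every integer, is at most \(\max_j k_j\).

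There is no real obstacle here. The one point needing care is to justify rearranging the double series so that negative and nonnegative powers are collected separately. This is legitimate because only finitely many indices \(j\) occur and each inner series in \(m\) converges absolutely on compact subsets of the punctured disc.
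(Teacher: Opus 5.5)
Your proposal is correct and is essentially the paper's argument: the paper also reads off the negative powers, namely the terms with $m<k_j$, from the Laurent expansion of Lemma~\ref{lem:local-multitangent-expansion}, and transfers the pole-order bound to every integer via the periodicity in Proposition~\ref{pro:regularized-periodicity}. Your remarks on regrouping the series and on uniqueness of Laurent expansions only make explicit what the paper leaves implicit.
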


\begin{proof}
In the first sum of
\eqref{eq:local-multitangent-expansion}, all powers of \(s\) are
nonnegative. In the second sum, a negative power occurs precisely
when $m<k_j$. This gives \eqref{eq:principal-part-multitangent}. The assertion at
every integer follows from
Proposition~\ref{pro:regularized-periodicity}.
\end{proof}

\section{Finite Partial-Fraction Reduction}

In this section, we establish the finite reduction formula for
multitangent functions by means of a global partial-fraction
decomposition. The main idea is to interpret the reduction as a consequence of the partial fraction decomposition of a rational function.

\begin{lem}\label{lem:partial-fraction}
Let \(n_1,\cdots,n_r\in\mathbb{Z}\) be pairwise distinct numbers and $\bfk=(k_1,\cdots,k_r)$ be an index. Then, for every $s\in\mathbb C-\mathbb{Z}$, we have

\begin{equation}
\begin{split}
&\frac{1}{(n_1+s)^{k_1}\cdots(n_r+s)^{k_r}}\\
&=\sum_{j=1}^{r}\sum_{m_j=0}^{k_j-1}\left(\sum_{\substack{l_1+\cdots+l_{j-1}+l_{j+1}+\cdots+l_r=m_j\\ l_1,\cdots,l_{j-1},l_{j+1},\cdots,l_r\geqslant 0}}\prod_{\substack{1\leqslant i\leqslant r\\i\neq j}}\frac{\binom{-k_i}{l_i}}{(n_i-n_j)^{k_i+l_i}}\right)\frac{1}{(s+n_j)^{k_j-m_j}}.
\end{split}
\label{eq:partial-fraction}
\end{equation}

\end{lem}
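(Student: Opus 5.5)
The plan is to prove the identity as the ordinary partial-fraction decomposition of a rational function of \(s\). Put
\[
R(s):=\prod_{i=1}^{r}(n_i+s)^{-k_i}.
\]
Since the \(n_i\) are pairwise distinct, \(R\) has poles exactly at \(s=-n_j\), of order \(k_j\). Its total degree is \(-|\bfk|<0\), so \(R(s)\to0\) as \(|s|\to\infty\). By the standard partial-fraction theorem, a rational function vanishing at infinity equals the sum of its principal parts at its poles. To see this, note that \(R-\sum_j \operatorname{PP}_{s=-n_j}R\) is entire and tends to \(0\) at infinity, hence vanishes identically by Liouville. It therefore suffices to compute each principal part and check that it is the inner \(j\)-sum of \eqref{eq:partial-fraction}.

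Fix \(j\) and write \(t:=s+n_j\). For \(i\neq j\) we have \(n_i+s=(n_i-n_j)+t\), with \(n_i-n_j\neq0\). For \(|t|<\min_{i\neq j}|n_i-n_j|\), the binomial series gives
\[
(n_i+s)^{-k_i}=\bigl((n_i-n_j)+t\bigr)^{-k_i}
=\sum_{l_i\geqslant0}\binom{-k_i}{l_i}\frac{t^{l_i}}{(n_i-n_j)^{k_i+l_i}}.
\]
Multiplying these \(r-1\) absolutely convergent series and collecting the coefficient of \(t^{m}\), we get
\[
\prod_{i\neq j}(n_i+s)^{-k_i}=\sum_{m\geqslant0}C_{j,m}\,t^{m},
\]
where \(C_{j,m}\) is exactly the bracketed sum over \(l_1+\cdots+\widehat{l_j}+\cdots+l_r=m\) in \eqref{eq:partial-fraction}. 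Multiplying by \(t^{-k_j}\), the negative powers of \(t\) are precisely the terms with \(0\leqslant m\leqslant k_j-1\). Hence
\[
\operatorname{PP}_{s=-n_j}R=\sum_{m_j=0}^{k_j-1}C_{j,m_j}(s+n_j)^{-(k_j-m_j)}.
\]

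Summing over \(j\) and invoking the Liouville argument then yields \eqref{eq:partial-fraction} for all \(s\) away from the poles, in particular for every \(s\in\mathbb C-\mathbb Z\).

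No step should be a real obstacle. The only care needed is bookkeeping: identifying the Cauchy-product coefficient with the stated multi-index sum, and handling the degenerate case \(r=1\). In that case the product over \(i\neq j\) is empty and equals \(1\), so \(C_{1,m}=\delta_{m,0}\) and the identity is trivial.
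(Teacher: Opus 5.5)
Your proposal is correct and follows the paper's proof essentially step for step. You compute each principal part at $s=-n_j$ by binomially expanding the factors $\bigl((n_i-n_j)+(s+n_j)\bigr)^{-k_i}$ and taking their Cauchy product. You then show that the difference between the rational function and the sum of its principal parts is entire and tends to $0$ at infinity, so it vanishes identically by Liouville.
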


\begin{proof}
Set
\[
F(s):=
\frac{1}
{(s+n_1)^{k_1}\cdots(s+n_r)^{k_r}}.
\]
If \(r=1\), the assertion is immediate. Hence we may assume
\(r\geqslant2\). We first compute the principal part of \(F(s)\) at each pole
\(s=-n_j\). Fix \(j\in\{1,\cdots,r\}\), then
\[
s+n_i=s+n_j+(n_i-n_j)
\qquad (i\ne j),
\]
and hence
\[
F(s)
=
\frac{1}{(s+n_j)^{k_j}}
\prod_{\substack{1\leqslant i\leqslant r\\i\neq j}}
\frac{1}{\bigl(s+n_j+(n_i-n_j)\bigr)^{k_i}}.
\]
Since the integers \(n_1,\cdots,n_r\) are pairwise distinct, we have
\(n_i-n_j\ne0\) for every \(i\ne j\). Therefore, for
\[
|s+n_j|<
\min_{i\ne j}|n_i-n_j|,
\]
the generalized binomial expansion gives
\begin{align*}
\frac{1}{\bigl(s+n_j+(n_i-n_j)\bigr)^{k_i}}
&=
\frac{1}{(n_i-n_j)^{k_i}}
\left(
1+\frac{s+n_j}{n_i-n_j}
\right)^{-k_i} =
\sum_{l_i=0}^{\infty}
\binom{-k_i}{l_i}
\frac{(s+n_j)^{l_i}}
{(n_i-n_j)^{k_i+l_i}}.
\end{align*}
Multiplying these absolutely convergent power series, we obtain
\begin{align*}
\prod_{\substack{1\leqslant i\leqslant r\\i\ne j}}
\frac{1}{\bigl(s+n_j+n_i-n_j\bigr)^{k_i}}
&=
\sum_{m_j=0}^{\infty}
\left(
\sum_{\substack{l_1+\cdots+l_{j-1}+l_{j+1}+\cdots+l_r=m_j\\ l_1,\cdots,l_{j-1},l_{j+1},\cdots,l_r\geqslant 0}}
\prod_{\substack{1\leqslant i\leqslant r\\i\ne j}}
\frac{\binom{-k_i}{l_i}}
{(n_i-n_j)^{k_i+l_i}}
\right)(s+n_j)^{m_j}.
\end{align*}
Consequently,
\begin{align*}
F(s)
&=\sum_{m_j=0}^{\infty}
\left(
\sum_{\substack{l_1+\cdots+l_{j-1}+l_{j+1}+\cdots+l_r=m_j\\ l_1,\cdots,l_{j-1},l_{j+1},\cdots,l_r\geqslant 0}}
\prod_{\substack{1\leqslant i\leqslant r\\i\ne j}}
\frac{\binom{-k_i}{l_i}}
{(n_i-n_j)^{k_i+l_i}}
\right)(s+n_j)^{m_j-k_j}.
\end{align*}
The negative powers of \(s+n_j\) occur precisely when $0\leqslant m_j\leqslant k_j-1$. Therefore, the principal part of \(F(s)\) at \(s=-n_j\) is
\begin{align}
\operatorname{PP}_{s=-n_j}F(s)
&=
\sum_{m_j=0}^{k_j-1}
\left(
\sum_{\substack{l_1+\cdots+l_{j-1}+l_{j+1}+\cdots+l_r=m_j\\ l_1,\cdots,l_{j-1},l_{j+1},\cdots,l_r\geqslant 0}}
\prod_{\substack{1\leqslant i\leqslant r\\i\ne j}}
\frac{\binom{-k_i}{l_i}}
{(n_i-n_j)^{k_i+l_i}}
\right)
\frac{1}{(s+n_j)^{k_j-m_j}}.
\label{eq:principal-part}
\end{align}

Now let \(R(s)\) denote the right-hand side of
\eqref{eq:partial-fraction}. By \eqref{eq:principal-part}, the
principal part of \(R(s)\) at \(s=-n_j\) is exactly the principal
part of \(F(s)\) there. It follows that
\[
D(s):=F(s)-R(s)
\]
has removable singularities at every point
\[
-n_1,\cdots,-n_r.
\]
After removing these singularities, \(D(s)\) is an entire function.

Moreover, as \(|s|\to\infty\),
\[
F(s)
=
O\left(
|s|^{-|\bfk|}
\right),
\]
while every summand in \(R(s)\) is \(O(|s|^{-1})\). Hence
\[
D(s)=O(|s|^{-1}),
\qquad |s|\to\infty.
\]
In particular,
\[
D(s)\longrightarrow0
\qquad (|s|\to\infty).
\]

Thus \(D\) is an entire bounded function. By Liouville's theorem,
\(D\) is constant; since \(D(s)\to0\) at infinity, this constant is
zero. Therefore
\[
D(s)\equiv0,
\]
which proves \eqref{eq:partial-fraction}.
\end{proof}

\begin{thm}[Finite Reduction Formula]\label{thm:finite_pf_reduction}
Let $
\bfk=(k_1,\cdots,k_r)$ be an index. For \(s\in\mathbb C-\mathbb Z\), one has
\begin{align}
\mathcal T_N(\bfk;s)
={}&
\sum_{j=1}^r
\sum_{m_j=0}^{k_j-1}
\sum_{-N<n_j<N}
\left(
\sum_{\substack{a+b=m_j\\a,b\geqslant0}}
\zeta_{(-N-n_j,0),a}\bigl(\bfk_{[1,j)}\bigr)
\zeta_{(0,N-n_j),b}\bigl(\bfk_{(j,r]}\bigr)
\right)
\frac{1}{(n_j+s)^{k_j-m_j}}.
\label{eq:finite-pf-reduction}
\end{align}
\end{thm}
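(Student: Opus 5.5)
We apply Lemma~\ref{lem:partial-fraction} term by term to the finite sum defining $\mathcal T_N(\bfk;s)$ and then regroup according to the distinguished variable $n_j$. Since each chain $-N<n_1<\cdots<n_r<N$ consists of pairwise distinct integers, the lemma applies to every summand. It expresses the summand as a sum over $j\in\{1,\dots,r\}$ and $0\leqslant m_j\leqslant k_j-1$ of
\[
\Bigl(\sum_{\sum_{i\neq j} l_i=m_j}\prod_{i\neq j}\binom{-k_i}{l_i}(n_i-n_j)^{-k_i-l_i}\Bigr)(n_j+s)^{-(k_j-m_j)}.
\]
All sums are finite, so we may interchange them freely. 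For fixed $j$ and $m_j$ we sum first over $n_j\in(-N,N)$ and then over the remaining variables subject to $-N<n_1<\cdots<n_{j-1}<n_j<n_{j+1}<\cdots<n_r<N$.

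For fixed $n_j$, we substitute $p_i:=n_i-n_j$ for $i\neq j$. The constraints decouple into two independent chains. The left chain is $-N-n_j<p_1<\cdots<p_{j-1}<0$ and the right chain is $0<p_{j+1}<\cdots<p_r<N-n_j$. We also split the multi-index $(l_i)_{i\neq j}$ into its left part $(l_1,\dots,l_{j-1})$, with total $a$, and its right part $(l_{j+1},\dots,l_r)$, with total $b$, where $a+b=m_j$. The inner sum then factors as
\[
\sum_{a+b=m_j}\Bigl(\sum_{\lambda\text{ left},\,|\lambda|=a}\prod_{i<j}\tbinom{-k_i}{l_i}\sum_{\text{left chain}}\prod_{i<j}p_i^{-k_i-l_i}\Bigr)\Bigl(\text{analogous right factor}\Bigr).
\]
By \eqref{eq:shifted-truncated-Hurwitz} at $s=0$, the left bracket is exactly $\zeta_{(-N-n_j,0),a}(\bfk_{[1,j)})$ and the right bracket is $\zeta_{(0,N-n_j),b}(\bfk_{(j,r]})$. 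Evaluation at $s=0$ is legitimate because every $p_i$ is nonzero on these ranges.

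The edge cases need to match the stated conventions. When $j=1$ or $j=r$, one side is empty. There the empty product equals $1$ only when the corresponding $l$'s sum to $0$, which is the convention $\zeta_{(A,B),m}(\varnothing)=\delta_{m,0}$. When the truncation range is degenerate, for instance $N-n_j=1$ so that $(0,1)$ contains no integers, the finite sums are correctly $0$ for a nonempty index, or $\delta_{m,0}$ for the empty one. Definition~\ref{def:truncated-Hurwitz} requires $A<B$, and here $-N-n_j<0$ and $0<N-n_j$ always hold.

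Summing over $n_j$, $m_j$ and $j$ then gives \eqref{eq:finite-pf-reduction}. We do not expect a genuine obstacle. The only step needing care is the bookkeeping in the factorization: the binomial weights and the exponents $k_i+l_i$ must separate cleanly into left and right pieces, and the sign conventions must agree with the paper's definition of the negative-range sums. Since we evaluate directly with $p_i<0$ and perform no reflection, no sign factor arises.
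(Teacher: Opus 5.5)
Your proposal is correct and follows essentially the same route as the paper. Both apply Lemma~\ref{lem:partial-fraction} to each summand, interchange the finite sums, split $m_j=a+b$ into left and right parts, and shift the remaining variables by $n_j$ to identify the two factors with $\zeta_{(-N-n_j,0),a}(\bfk_{[1,j)})$ and $\zeta_{(0,N-n_j),b}(\bfk_{(j,r]})$. Your remarks on the empty-index convention $\delta_{m,0}$ and on degenerate truncation ranges are accurate bookkeeping that the paper leaves implicit.
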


\begin{proof}
By definition,
\[
\mathcal T_N(\bfk;s)
=
\sum_{-N<n_1<\cdots<n_r<N}
\frac{1}
{(n_1+s)^{k_1}\cdots(n_r+s)^{k_r}}.
\]
For each fixed tuple
\[
-N<n_1<\cdots<n_r<N,
\]
the integers \(n_1,\cdots,n_r\) are pairwise distinct. Hence by Lemma \ref{lem:partial-fraction}, the
partial-fraction decomposition gives
\begin{align}
&\frac{1}{(n_1+s)^{k_1}\cdots(n_r+s)^{k_r}}\nonumber\\
&=\sum_{j=1}^{r}\sum_{m_j=0}^{k_j-1}\left(\sum_{\substack{l_1+\cdots+l_{j-1}+l_{j+1}+\cdots+l_r=m_j\\ l_1,\cdots,l_{j-1},l_{j+1},\cdots,l_r\geqslant 0}}\prod_{\substack{1\leqslant i\leqslant r\\i\neq j}}\frac{\binom{-k_i}{l_i}}{(n_i-n_j)^{k_i+l_i}}\right)\frac{1}{(s+n_j)^{k_j-m_j}}.
\label{eq:partial-fraction-used}
\end{align}
Substituting \eqref{eq:partial-fraction-used} into the definition of
\(\mathcal T_N(\bfk;s)\), and interchanging the finite sums, we obtain
\begin{align}
\mathcal T_N(\bfk;s)
={}&
\sum_{j=1}^{r}
\sum_{m_j=0}^{k_j-1}
\sum_{-N<n_j<N}
\frac{C_{j,m_j}(n_j)}
{(n_j+s)^{k_j-m_j}},
\label{eq:TN-Cjm}
\end{align}
where
\begin{align}
C_{j,m_j}(n_j)
:={}&
\sum_{\substack{
-N<n_1<\cdots<n_{j-1}<n_j\\
n_j<n_{j+1}<\cdots<n_r<N
}}
\;
\sum_{\substack{l_1+\cdots+l_{j-1}+l_{j+1}+\cdots+l_r=m_j\\ l_1,\cdots,l_{j-1},l_{j+1},\cdots,l_r\geqslant 0}}\prod_{\substack{1\leqslant i\leqslant r\\i\neq j}}\frac{\binom{-k_i}{l_i}}{(n_i-n_j)^{k_i+l_i}}.
\label{eq:Cjm-definition}
\end{align}
We now calculate \(C_{j,m_j}(n_j)\). Decompose $m_j=a+b$, where
\[
a=l_1+\cdots+l_{j-1},
\qquad
b=l_{j+1}+\cdots+l_r.
\]
After grouping the summation according to \(a\) and \(b\), the left
and right variables become independent. Thus
\begin{align}
C_{j,m_j}(n_j)
={}&
\sum_{\substack{a+b=m_j\\a,b\geqslant0}}
L_{j,a}(n_j)R_{j,b}(n_j),
\label{eq:C-factorization}
\end{align}
where
\begin{align}
L_{j,a}(n_j)
:={}&
\sum_{\substack{
l_1+\cdots+l_{j-1}=a\\
l_1,\cdots,l_{j-1}\geqslant0
}}
\left(
\prod_{i=1}^{j-1}\binom{-k_i}{l_i}
\right)
\sum_{-N<n_1<\cdots<n_{j-1}<n_j}
\prod_{i=1}^{j-1}
\frac{1}{(n_i-n_j)^{k_i+l_i}},
\label{eq:left-part}
\\
R_{j,b}(n_j)
:={}&
\sum_{\substack{
l_{j+1}+\cdots+l_r=b\\
l_{j+1},\cdots,l_r\geqslant0
}}
\left(
\prod_{i=j+1}^{r}\binom{-k_i}{l_i}
\right)
\sum_{n_j<n_{j+1}<\cdots<n_r<N}
\prod_{i=j+1}^{r}
\frac{1}{(n_i-n_j)^{k_i+l_i}}.
\label{eq:right-part}
\end{align}
For the left-hand factor, make the change of variables
\[
u_i=n_i-n_j,
\qquad 1\leqslant i<j.
\]
The inequalities
\[
-N<n_1<\cdots<n_{j-1}<n_j
\]
become
\[
-N-n_j<u_1<\cdots<u_{j-1}<0.
\]
Consequently,
\begin{align}
L_{j,a}(n_j)
={}&
\sum_{\substack{
l_1+\cdots+l_{j-1}=a\\
l_1,\cdots,l_{j-1}\geqslant0
}}
\left(
\prod_{i=1}^{j-1}\binom{-k_i}{l_i}
\right)
\zeta_{(-N-n_j,0)}
(k_1+l_1,\cdots,k_{j-1}+l_{j-1})
\nonumber\\
={}&
\zeta_{(-N-n_j,0),a}(k_1,\cdots,k_{j-1})
\nonumber\\
={}&
\zeta_{(-N-n_j,0),a}\bigl(\bfk_{[1,j)}\bigr).
\label{eq:left-zeta}
\end{align}
Similarly, for the right-hand factor, put
\[
v_i=n_i-n_j,
\qquad j<i\leqslant r.
\]
Then
\[
n_j<n_{j+1}<\cdots<n_r<N
\]
is equivalent to
\[
0<v_{j+1}<\cdots<v_r<N-n_j.
\]
It follows that
\begin{align}
R_{j,b}(n_j)
={}&
\sum_{\substack{
l_{j+1}+\cdots+l_r=b\\
l_{j+1},\cdots,l_r\geqslant0
}}
\left(
\prod_{i=j+1}^{r}\binom{-k_i}{l_i}
\right)
\zeta_{(0,N-n_j)}
(k_{j+1}+l_{j+1},\cdots,k_r+l_r)
\nonumber\\
={}&
\zeta_{(0,N-n_j),b}(k_{j+1},\cdots,k_r)
\nonumber\\
={}&
\zeta_{(0,N-n_j),b}\bigl(\bfk_{(j,r]}\bigr).
\label{eq:right-zeta}
\end{align}
Combining \eqref{eq:C-factorization},
\eqref{eq:left-zeta}, and \eqref{eq:right-zeta}, we obtain
\begin{align}
C_{j,m_j}(n_j)
=
\sum_{\substack{a+b=m_j\\a,b\geqslant0}}
\zeta_{(-N-n_j,0),a}\bigl(\bfk_{[1,j)}\bigr)
\zeta_{(0,N-n_j),b}\bigl(\bfk_{(j,r]}\bigr).
\label{eq:C-final}
\end{align}
Finally, substituting \eqref{eq:C-final} into
\eqref{eq:TN-Cjm} yields
\begin{align*}
\mathcal T_N(\bfk;s)
={}&
\sum_{j=1}^r
\sum_{m_j=0}^{k_j-1}
\sum_{-N<n_j<N}
\left(
\sum_{\substack{a+b=m_j\\a,b\geqslant0}}
\zeta_{(-N-n_j,0),a}\bigl(\bfk_{[1,j)}\bigr)
\zeta_{(0,N-n_j),b}\bigl(\bfk_{(j,r]}\bigr)
\right)
\frac{1}{(n_j+s)^{k_j-m_j}},
\end{align*}
which is exactly \eqref{eq:finite-pf-reduction}.
\end{proof}

\section{Moving-Window Asymptotics}

In the finite partial-fraction reduction obtained in the preceding
section, the coefficients are products of shifted finite multiple zeta
values whose truncation bounds depend on the distinguished summation
variable. More precisely, they involve expressions of the form
\[
\zeta_{(-N-n,0),a}(\boldsymbol{\alpha})
\zeta_{(0,N-n),b}(\boldsymbol{\beta}),
\qquad -N<n<N.
\]
The purpose of this section is to determine the asymptotic behavior of
the corresponding one-dimensional sums as \(N\to\infty\).

For poles of order at least \(2\), the moving coefficients may be
replaced by their values at \(n=0\). For the simple pole, an additional
symmetric moving-window sum remains. We show that this extra term is
given asymptotically by an explicit logarithmic integral involving
stuffle-regularized multiple zeta values. Combining these results with
the finite reduction formula yields an asymptotic reduction of
\(\mathcal T_N(\bfk;s)\) into monotangent functions.

Throughout this section, we put
\[
T_N:=\zeta_{(0,N)}(1).
\]

\subsection{Basic estimates for shifted finite multiple zeta values}

\begin{lem}[Growth and local-shift estimates]
\label{lem:shifted-truncation-estimates}
Let $
\boldsymbol{\alpha}
=(\alpha_1,\cdots,\alpha_d)
\in(\mathbb Z_{>0})^{\,d}, a\in\mathbb Z_{\geqslant0}$. Then the following assertions hold.

First,
\begin{align}
\left|
\zeta_{(0,M),a}(\boldsymbol{\alpha})
\right|
\leqslant c_1(\alpha,a)\cdot
(1+\log M)^d.
\label{eq:shifted-truncation-growth}
\end{align}

Second,
\begin{align}
\left|
\zeta_{(0,M+1),a}(\boldsymbol{\alpha})
-
\zeta_{(0,M),a}(\boldsymbol{\alpha})
\right|
\leqslant c_2(\alpha,a)\cdot
\frac{(1+\log M)^{d-1}}{M}.
\label{eq:shifted-one-step}
\end{align}

Consequently, for every integer \(h\) satisfying
\[
|h|\leqslant \frac M2,
\]
one has
\begin{align}
\left|
\zeta_{(0,M+h),a}(\boldsymbol{\alpha})
-
\zeta_{(0,M),a}(\boldsymbol{\alpha})
\right|
\leqslant c_3(\alpha,a)\cdot
\frac{|h|}{M}(1+\log M)^{d-1}.
\label{eq:shifted-local-shift}
\end{align}

Finally,
\begin{align}
\zeta_{(-M,0),a}(\boldsymbol{\alpha})
=
(-1)^{|\boldsymbol{\alpha}|+a}
\zeta_{(0,M),a}
(\overleftarrow{\boldsymbol{\alpha}}).
\label{eq:shifted-negative-reversal}
\end{align}
\end{lem}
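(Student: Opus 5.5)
The plan is to reduce everything to unshifted truncated sums with positive integer arguments. By definition \eqref{eq:shifted-truncation-hurwitz}, $\zeta_{(0,M),a}(\boldsymbol{\alpha})$ is a finite linear combination, with coefficients $\prod_\nu\binom{-\alpha_\nu}{\lambda_\nu}$ depending only on $(\boldsymbol\alpha,a)$, of the sums $\zeta_{(0,M)}(\alpha_1+\lambda_1,\cdots,\alpha_d+\lambda_d)$ with $\sum\lambda_\nu=a$. Each of these indices has depth $d$ and positive entries. Therefore it suffices to prove the three inequalities for a single truncated sum $\zeta_{(0,M)}(\boldsymbol{\beta})$ of depth $d$, and then take $c_i(\boldsymbol\alpha,a)$ to be the corresponding sum of absolute coefficients times the constant for each $\boldsymbol\beta$.

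For the growth bound \eqref{eq:shifted-truncation-growth}, I will take $s=0$ in Lemma~\ref{lem:finite-Hurwitz-growth}; more directly, $n_\nu^{-\beta_\nu}\leqslant n_\nu^{-1}$, so
\[
|\zeta_{(0,M)}(\boldsymbol\beta)|\leqslant T_M^d/d!\leqslant(1+\log M)^d.
\]
For the one-step estimate \eqref{eq:shifted-one-step}, the difference $\zeta_{(0,M+1)}(\boldsymbol\beta)-\zeta_{(0,M)}(\boldsymbol\beta)$ consists exactly of the chains with $n_d=M$. It therefore equals $M^{-\beta_d}\zeta_{(0,M)}(\beta_1,\cdots,\beta_{d-1})$, which is bounded by $M^{-1}(1+\log M)^{d-1}$ by the growth bound in depth $d-1$. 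For $d=0$ the difference is $0$, since by convention the empty sum is $\delta_{a,0}$.

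For the local shift \eqref{eq:shifted-local-shift}, I will telescope over the $|h|$ unit steps between $M$ and $M+h$. Every intermediate $M'$ satisfies $M/2\leqslant M'\leqslant 3M/2$, so $1/M'\leqslant 2/M$ and $1+\log M'\leqslant 1+\log(3/2)+\log M\ll 1+\log M$. If $M'$ could be $0$ (which happens only when $M\leqslant 1$, $h=-1$), the case is trivial because all sums are bounded; I will handle it by enlarging the constant, and the same applies to $M=0$ in the growth bound if needed. Summing the $|h|$ one-step bounds gives the claim.

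For the reversal identity \eqref{eq:shifted-negative-reversal}, I will substitute $m_\nu=-n_{d+1-\nu}$. This gives
\[
\zeta_{(-M,0)}(\boldsymbol\beta)=(-1)^{|\boldsymbol\beta|}\zeta_{(0,M)}(\overleftarrow{\boldsymbol\beta}),
\]
as in Corollary~\ref{cor:negative-Hurwitz-asymptotic} at $s=0$. With $\boldsymbol\beta=\boldsymbol\alpha+\boldsymbol\lambda$ the sign becomes $(-1)^{|\boldsymbol\alpha|+a}$. Reindexing $\boldsymbol\lambda$ by its reversal matches the binomial coefficients with those attached to $\overleftarrow{\boldsymbol\alpha}$, because the product $\prod_\nu\binom{-\alpha_\nu}{\lambda_\nu}$ is invariant under simultaneous reversal. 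No step here is a real obstacle; the only care needed is the bookkeeping of small $M$ in the constants.
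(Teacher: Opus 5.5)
Your proposal is correct and follows essentially the same route as the paper: you expand the shifted sum into finitely many unshifted truncations bounded by $T_M^d/d!$, isolate the chains with $n_d=M$ for the one-step estimate, telescope for the local shift, and substitute $m_\nu=-n_{d+1-\nu}$ (reindexing the $\lambda_\nu$'s) for the reversal identity. The small-$M$ edge case you set aside never occurs: with $M=1$, the condition $|h|\leqslant M/2$ forces $h=0$, so every intermediate truncation point satisfies $M'\geqslant1$.
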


\begin{proof}
By definition,
\[
\zeta_{(0,M),a}(\boldsymbol{\alpha})
=
\sum_{\substack{
\lambda_1+\cdots+\lambda_d=a\\
\lambda_1,\cdots,\lambda_d\geqslant0
}}
\left(
\prod_{\nu=1}^{d}
\binom{-\alpha_\nu}{\lambda_\nu}
\right)
\zeta_{(0,M)}
(\alpha_1+\lambda_1,\cdots,\alpha_d+\lambda_d).
\]
Since every exponent \(\alpha_\nu+\lambda_\nu\) is at least \(1\),
\[
\left|
\zeta_{(0,M)}
(\alpha_1+\lambda_1,\cdots,\alpha_d+\lambda_d)
\right|
\leqslant
\frac1{d!}
\left(
\sum_{n=1}^{M-1}\frac1n
\right)^d\leqslant(1+\log M)^d.
\]
The sum over \((\lambda_1,\cdots,\lambda_d)\) is finite, which proves
\eqref{eq:shifted-truncation-growth}.

The new terms appearing when the upper bound changes from \(M\) to
\(M+1\) are precisely those for which the last summation variable is
equal to \(M\). Hence
\begin{align*}
&\zeta_{(0,M+1),a}(\boldsymbol{\alpha})
-
\zeta_{(0,M),a}(\boldsymbol{\alpha})
\\
&=
\sum_{\substack{
\lambda_1+\cdots+\lambda_d=a\\
\lambda_1,\cdots,\lambda_d\geqslant0
}}
\left(
\prod_{\nu=1}^{d}
\binom{-\alpha_\nu}{\lambda_\nu}
\right)
\frac{
\zeta_{(0,M)}
(\alpha_1+\lambda_1,\cdots,
 \alpha_{d-1}+\lambda_{d-1})
}{
M^{\alpha_d+\lambda_d}
}.
\end{align*}
Here the numerator is interpreted as \(1\) when \(d=1\). Since
\[
\alpha_d+\lambda_d\geqslant1,
\]
the growth estimate for the prefix gives
\eqref{eq:shifted-one-step}.

If \(h>0\), summing \eqref{eq:shifted-one-step} from \(M\) to
\(M+h-1\) gives \eqref{eq:shifted-local-shift}. If \(h<0\), we sum
from \(M+h\) to \(M-1\). Since \(|h|\leqslant M/2\), all intermediate
indices are comparable with \(M\), and the same estimate follows.

For the final identity, changing the negative summation variables into
positive ones and reversing their order gives
\[
\begin{aligned}
&
\zeta_{(-M,0)}
(\alpha_1+\lambda_1,\cdots,\alpha_d+\lambda_d)
=
(-1)^{|\boldsymbol{\alpha}|+\lambda_1+\cdots+\lambda_d}
\zeta_{(0,M)}
(\alpha_d+\lambda_d,\cdots,\alpha_1+\lambda_1).
\end{aligned}
\]
Since
\[
\lambda_1+\cdots+\lambda_d=a,
\]
summing over the \(\lambda_\nu\)'s proves
\eqref{eq:shifted-negative-reversal}.
\end{proof}

\begin{lem}[Regularized polynomial approximation]
\label{lem:shifted-regularized-approximation}
Let \(\boldsymbol{\alpha}\) be a fixed index and let \(a\geqslant0\).
There exists \(A\geqslant0\), depending only on
\(\boldsymbol{\alpha}\) and \(a\), such that
\begin{align}
\zeta_{(0,M),a}(\boldsymbol{\alpha})
=
\zeta_{*,a}^{T_M}(\boldsymbol{\alpha})
+
O_{\boldsymbol{\alpha},a}
\left(
\frac{(1+\log M)^A}{M}
\right),
\label{eq:shifted-approximation-TM}
\end{align}
where $T_M:=\zeta_{(0,M)}(1)=\log M+\gamma+O(M^{-1})$. Then one also has
\begin{align}
\zeta_{(0,M),a}(\boldsymbol{\alpha})
=
\zeta_{*,a}^{\log M+\gamma}(\boldsymbol{\alpha})
+
O_{\boldsymbol{\alpha},a}
\left(
\frac{(1+\log M)^A}{M}
\right).
\label{eq:shifted-approximation-LM}
\end{align}
Moreover,
\begin{align}
\left|
\zeta_{*,a}^{\log M+\gamma}(\boldsymbol{\alpha})
\right|
\leqslant c(\boldsymbol{\alpha},a)\cdot
(1+\log M)^A,
\label{eq:shifted-polynomial-growth}
\end{align}
for some constant $c(\boldsymbol{\alpha},a)>0$.
\end{lem}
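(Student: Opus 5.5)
The plan is to reduce the statement to Theorem~\ref{thm:regularized-Hurwitz-asymptotic} at \(s=0\), after first moving the shifted coefficient onto a finite linear combination of unshifted truncated MZVs. By Definition~\ref{def:truncated-Hurwitz},
\[
\zeta_{(0,M),a}(\boldsymbol{\alpha})
=
\sum_{\lambda_1+\cdots+\lambda_d=a}
\Bigl(\prod_{\nu}\binom{-\alpha_\nu}{\lambda_\nu}\Bigr)
\zeta_{(0,M)}(\alpha_1+\lambda_1,\cdots,\alpha_d+\lambda_d).
\]
The shifted regularized value \(\zeta^{T}_{*,a}(\boldsymbol{\alpha})\) is, by \eqref{eq:shifted-regularized-MZV}, the same finite combination of the values \(\zeta_*^{T}(\alpha_1+\lambda_1,\cdots)\). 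It therefore suffices to compare term by term.

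\textbf{Step 1.} For each of the finitely many shifted indices \(\boldsymbol\beta=\boldsymbol\alpha+\boldsymbol\lambda\), which have depth \(d\), I would apply Theorem~\ref{thm:regularized-Hurwitz-asymptotic} with \(s=0\in\mathcal L\) and \(K=\{0\}\). Since \(H(0)=0\), this gives
\[
\zeta_{(0,M)}(\boldsymbol\beta)=\zeta_*^{T_M}(\boldsymbol\beta)+O\bigl((1+\log M)^{d-1}/M\bigr).
\]
Summing with the binomial weights, whose number is finite and depends only on \(\boldsymbol\alpha,a\), yields \eqref{eq:shifted-approximation-TM} with \(A=d-1\). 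For \(d=0\) the statement is trivial by the empty-index convention.

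\textbf{Step 2.} To pass to \eqref{eq:shifted-approximation-LM}, I would use that each \(\zeta_*^X(\boldsymbol\beta)\) is a polynomial in \(X\) of degree at most \(d\), with coefficients depending only on \(\boldsymbol\beta\) (Definition~\ref{def:stuffle-regularization}). For \(X,Y=O(1+\log M)\) the mean value inequality gives
\[
|P(X)-P(Y)|\ll |X-Y|(1+\log M)^{d-1}.
\]
The standard expansion \(T_M=\log M+\gamma+O(M^{-1})\) follows from the Euler--Maclaurin formula, or from Definition~\ref{def:harmonic-correction} together with the asymptotics of \(\psi\). With \(X=T_M\) and \(Y=\log M+\gamma\), the change is \(O((1+\log M)^{d-1}/M)\). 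This is absorbed into the same bound, so the same \(A\) works.

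\textbf{Step 3.} The growth bound \eqref{eq:shifted-polynomial-growth} is immediate, since a polynomial of degree \(\le d\) evaluated at \(\log M+\gamma=O(1+\log M)\) is \(O((1+\log M)^d)\). This requires enlarging \(A\) to \(d\), which is harmless because the error bounds in Steps 1--2 only improve when \(A\) increases. I would therefore take \(A=d\) throughout.

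The only point needing care is the uniformity of the implied constants, which comes from the finiteness of the \(\lambda\)-sum. There is no real obstacle here.
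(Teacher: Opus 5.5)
Your proposal is correct and follows the paper's proof essentially step for step: apply Theorem~\ref{thm:regularized-Hurwitz-asymptotic} at $s=0$ (where $H(0)=0$) to each shifted index, use a mean-value estimate to replace $T_M$ by $\log M+\gamma$, and note the trivial polynomial growth bound. Your explicit choice $A=d$, which lets one exponent serve all three estimates, is a small clarification that the paper leaves implicit.
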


\begin{proof}
The quantity \(\zeta_{(0,M),a}(\boldsymbol{\alpha})\) is a finite linear
combination of truncated multiple zeta values with indices
\[
(\alpha_1+\lambda_1,\cdots,\alpha_d+\lambda_d).
\]
Applying
Theorem~\ref{thm:regularized-Hurwitz-asymptotic} at \(s=0\) to every
such index and summing the resulting finite collection of estimates
gives \eqref{eq:shifted-approximation-TM}.

Since
\[
T_M=\log M+\gamma+O(M^{-1}),
\]
and \(\zeta_{*,a}^X(\boldsymbol{\alpha})\) is a fixed polynomial in \(X\),
the mean value theorem gives
\[
\zeta_{*,a}^{T_M}(\boldsymbol{\alpha})
-
\zeta_{*,a}^{\log M+\gamma}(\boldsymbol{\alpha})
=
O_{\boldsymbol{\alpha},a}
\left(
\frac{(1+\log M)^A}{M}
\right).
\]
This proves \eqref{eq:shifted-approximation-LM}. The polynomial growth
estimate \eqref{eq:shifted-polynomial-growth} is immediate.
\end{proof}

\subsection{Higher-order and simple-pole kernels}

Let $\boldsymbol{\alpha},
\boldsymbol{\beta}$ be two indices, possibly empty, and let \(a,b\geqslant0\). For
\(-N<n<N\), put
\begin{align}
C_N(n)
:=
\zeta_{(-N-n,0),a}(\boldsymbol{\alpha})
\zeta_{(0,N-n),b}(\boldsymbol{\beta}).
\label{eq:def-moving-coefficient}
\end{align}

\begin{lem}[Higher-order-pole approximation]
\label{lem:higher-pole-moving-window}
Let \(k\geqslant2\), and put
\[
d:=\ell(\boldsymbol{\alpha})
+\ell(\boldsymbol{\beta}).
\]
Then, uniformly for $s\in\mathcal{L}, s\ne0$,
one has
\begin{align}
\sum_{-N<n<N}
\frac{C_N(n)}{(n+s)^k}
={}&
C_N(0)
\sum_{-N<n<N}\frac1{(n+s)^k}
\nonumber\\
&+
O_{\boldsymbol{\alpha},\boldsymbol{\beta},a,b,k}
\left(
\frac{(1+\log N)^d}{N}
\right).
\label{eq:higher-pole-moving-window}
\end{align}
\end{lem}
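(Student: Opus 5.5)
We write the left-hand side of \eqref{eq:higher-pole-moving-window} minus its main term as
\[
\sum_{-N<n<N}\frac{C_N(n)-C_N(0)}{(n+s)^k}.
\]
The aim is to show that this difference is $O((1+\log N)^d/N)$, uniformly for $s\in\mathcal L$. The basic tools are two facts. First, for $n\neq0$ and $s\in\mathcal L$ we have $|n+s|\geqslant |n|-\tfrac12\geqslant |n|/2$. Second, the $n=0$ term of the difference vanishes identically, so the factor $s^{-k}$ never appears; this is what makes the bound uniform even as $s\to0$.

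\emph{Step 1: rewriting the coefficients.} By \eqref{eq:shifted-negative-reversal}, the left factor is
\[
\zeta_{(-N-n,0),a}(\boldsymbol\alpha)=\pm\,\zeta_{(0,N+n),a}(\overleftarrow{\boldsymbol\alpha}).
\]
So $C_N(n)$ is a product $f(N+n)\,g(N-n)$ of one-sided shifted truncations, with $f$ of depth $\ell(\boldsymbol\alpha)$ and $g$ of depth $\ell(\boldsymbol\beta)$. By \eqref{eq:shifted-truncation-growth} each factor is $O((1+\log N)^{\text{depth}})$ for arguments up to $2N$, and hence $|C_N(n)|\ll(1+\log N)^d$ for all $-N<n<N$. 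Degenerate arguments such as $N+n=1$ give an empty sum, which is harmless.

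\emph{Step 2: the near range $0<|n|\leqslant N/2$.} Here \eqref{eq:shifted-local-shift} applies to both factors with $M=N$ and $h=\pm n$. Writing
\[
f(N+n)g(N-n)-f(N)g(N)=\bigl(f(N+n)-f(N)\bigr)g(N-n)+f(N)\bigl(g(N-n)-g(N)\bigr),
\]
we get
\[
|C_N(n)-C_N(0)|\ll \frac{|n|}{N}(1+\log N)^{d-1}.
\]
The contribution of this range is therefore bounded by
\[
\frac{(1+\log N)^{d-1}}{N}\sum_{0<|n|\leqslant N/2}\frac{2^k}{|n|^{k-1}}.
\]
Since $k\geqslant2$, the sum $\sum|n|^{1-k}$ is $O(\log N)$ when $k=2$ and $O(1)$ when $k>2$. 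Either way the total is $O((1+\log N)^d/N)$.

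\emph{Step 3: the far range $N/2<|n|<N$.} Here we do not try to compare $C_N(n)$ with $C_N(0)$; the crude bound from Step 1 suffices. It gives
\[
|C_N(n)-C_N(0)|\ll(1+\log N)^d,
\]
while $\sum_{|n|>N/2}|n+s|^{-k}\ll N^{1-k}\leqslant N^{-1}$. The contribution is again $O((1+\log N)^d/N)$.

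Adding the two ranges proves \eqref{eq:higher-pole-moving-window}. The only delicate point is the near range with $k=2$, where the extra $\log N$ must be absorbed. This works because the local-shift estimate loses one power of the logarithm relative to the growth bound: it gives $(1+\log N)^{d-1}$ rather than $(1+\log N)^d$. I therefore expect the careful application of \eqref{eq:shifted-local-shift} to both factors, with the correct depth exponents, to be the main thing to check.
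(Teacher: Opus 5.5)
Your proposal is correct and follows essentially the same route as the paper. Both proofs drop the vanishing $n=0$ term and then split into two ranges. In the near range $0<|n|\leqslant N/2$, the local-shift estimate is used, and the extra $\log N$ coming from $\sum|n|^{1-k}$ at $k=2$ is absorbed by the logarithm that estimate loses. In the far range $N/2<|n|<N$, the crude growth bound is combined with $|n+s|\gg N$. Your explicit telescoping of $f(N+n)g(N-n)-f(N)g(N)$ via the reversal identity just spells out a step the paper leaves implicit when it cites Lemma~\ref{lem:shifted-truncation-estimates}.
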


\begin{proof}
The difference between the two sides of
\eqref{eq:higher-pole-moving-window} is
\[
E_N(s)
=
\sum_{0<|n|<N}
\frac{
C_N(n)-C_N(0)
}{
(n+s)^k
}.
\]
Suppose first that \(0<|n|\leqslant N/2\). By
Lemma~\ref{lem:shifted-truncation-estimates},
\[
\left|
C_N(n)- C_N(0)
\right|
\leqslant c_1\cdot
\frac{|n|}{N}
(1+\log N)^{d-1},
\]
for some constant $c_1>0$, with the usual interpretation that the difference is zero when
\(d=0\).

For every nonzero integer \(n\),
\[
|n+s|
\geqslant
|\operatorname{Re}(n+s)|
\geqslant
|n|-\frac12
\geqslant
\frac{|n|}{2}.
\]
Hence
\begin{align*}
\sum_{0<|n|\leqslant N/2}
\frac{
| C_N(n)- C_N(0)|
}{
|n+s|^k
}
&\leqslant c_2\cdot
\frac{(1+\log N)^{d-1}}N
\sum_{0<n\leqslant N/2}\frac1{n^{k-1}}\\
&\leqslant c_2\cdot
\frac{(1+\log N)^d}{N}.
\end{align*}
for some constant $c_2>0$. If \(N/2<|n|<N\), the growth estimate gives
\[
|C_N(n)|+| C_N(0)|
\leqslant c_3\cdot
(1+\log N)^d,
\]
for some $c_3>0$, while
\[
|n+s|\geqslant\frac N4.
\]
Since this range contains \(O(N)\) integers,
\[
\sum_{N/2<|n|<N}
\frac{
|C_N(n)-C_N(0)|
}{
|n+s|^k
}
\leqslant 4^kc_3\cdot
N\frac{(1+\log N)^d}{N^k}
\leqslant 4^kc_3\cdot
\frac{(1+\log N)^d}{N}.
\]
Combining the two ranges proves the lemma.
\end{proof}

\begin{lem}[Simple-pole approximation]
\label{lem:simple-pole-moving-window}
Let $s\in\mathcal{L},s\neq0$. Then, locally uniformly for \(s\in\mathcal{L}\),
\begin{align}
\sum_{-N<n<N}
\frac{ C_N(n)}{n+s}
={}&
 C_N(0)\cdot
\sum_{-N<n<N}\frac1{n+s}
\nonumber\\
&+
\sum_{0<|n|<N}\frac{ C_N(n)}n
+
O_{\boldsymbol{\alpha},\boldsymbol{\beta},a,b,s}
\left(
\frac{(1+\log N)^d}{N}
\right),
\label{eq:simple-pole-moving-window}
\end{align}
where $d=\ell(\boldsymbol{\alpha})
+\ell(\boldsymbol{\beta})$.
\end{lem}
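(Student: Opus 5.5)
We decompose the left-hand side of \eqref{eq:simple-pole-moving-window} algebraically before estimating anything. For $n\ne0$ we write
\[
\frac{1}{n+s}=\frac1n-\frac{s}{n(n+s)} .
\]
The $n=0$ term is $C_N(0)/s$, and it appears identically on both sides. We therefore obtain the exact identity
\begin{align*}
\sum_{-N<n<N}\frac{C_N(n)}{n+s}-C_N(0)\sum_{-N<n<N}\frac1{n+s}
&=\sum_{0<|n|<N}\frac{C_N(n)}{n}-C_N(0)\sum_{0<|n|<N}\frac1n\\
&\quad-s\sum_{0<|n|<N}\frac{C_N(n)-C_N(0)}{n(n+s)}.
\end{align*}
The symmetric sum $\sum_{0<|n|<N}1/n$ vanishes, since the terms for $n$ and $-n$ cancel. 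The only quantity left to estimate is therefore
\[
E_N(s):=s\sum_{0<|n|<N}\frac{C_N(n)-C_N(0)}{n(n+s)}.
\]
This is exactly the error term of Lemma~\ref{lem:higher-pole-moving-window} with the kernel $1/(n+s)^k$ replaced by $s/(n(n+s))$, which decays like $|n|^{-2}$.

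We estimate $E_N(s)$ by splitting into the same two ranges as in the proof of Lemma~\ref{lem:higher-pole-moving-window}.

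\emph{Range $0<|n|\leqslant N/2$.} Here $N-n$ and $N+n$ lie in $[N/2,3N/2]$. Lemma~\ref{lem:shifted-truncation-estimates}, through \eqref{eq:shifted-local-shift} applied to each factor, \eqref{eq:shifted-negative-reversal} for the left factor, and \eqref{eq:shifted-truncation-growth} for the unchanged factor, gives
\[
|C_N(n)-C_N(0)|\ll \frac{|n|}{N}(1+\log N)^{d-1}.
\]
For $s\in\mathcal L$ we have $|n+s|\geqslant |n|/2$, so the kernel satisfies $|s|/|n(n+s)|\leqslant 2|s|/n^2$. The contribution of this range is then at most a constant times
\[
|s|\,\frac{(1+\log N)^{d-1}}{N}\sum_{n\leqslant N/2}\frac1n\ll_s \frac{(1+\log N)^{d}}{N}.
\]

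\emph{Range $N/2<|n|<N$.} The growth estimate \eqref{eq:shifted-truncation-growth} bounds both $C_N(n)$ and $C_N(0)$ by $(1+\log N)^d$. Here $|n(n+s)|\gg N^2$ and the range contains $O(N)$ terms, so this part is also $O_s\bigl((1+\log N)^d/N\bigr)$.

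Local uniformity comes from the fact that the only $s$-dependence is the factor $|s|$, which is bounded on compact sets, together with the bound $|n+s|\geqslant|n|/2$, which is uniform on $\mathcal L$.

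The main subtlety is the local-shift step when one of $\boldsymbol\alpha,\boldsymbol\beta$ is empty. In that case the corresponding factor is the constant $\delta_{a,0}$ or $\delta_{b,0}$, so its difference vanishes; the usual convention in Lemma~\ref{lem:shifted-truncation-estimates} covers this. For the left factor we rewrite $\zeta_{(-N-n,0),a}(\boldsymbol\alpha)=\pm\zeta_{(0,N+n),a}(\overleftarrow{\boldsymbol\alpha})$ before applying the shift estimate with $M=N$ and $h=\pm n$. The one-step differences also contribute a term of order $(1+\log N)^{d-1}$, and the extra logarithm from $\sum 1/n$ raises the exponent to exactly $d$, matching the claimed error.
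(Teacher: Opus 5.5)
Your proposal is correct and follows essentially the same route as the paper. The paper also cancels the $n=0$ term and rewrites the difference as $\sum_{0<|n|<N}(C_N(n)-C_N(0))\bigl(\frac1{n+s}-\frac1n\bigr)$. It then bounds the kernel by $2|s|/n^2$ and reruns the two-range argument of Lemma~\ref{lem:higher-pole-moving-window}. Your write-up is slightly more careful in one respect: you state explicitly the cancellation $\sum_{0<|n|<N}1/n=0$, which the paper uses silently in that rewriting.
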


\begin{proof}
Let \(D_N(s)\) denote the left-hand side of
\eqref{eq:simple-pole-moving-window} minus the first two terms on its
right-hand side. Since the \(n=0\) contribution cancels, we have
\begin{align*}
D_N(s)
&=
\sum_{0<|n|<N}
\left(
\frac{ C_N(n)- C_N(0)}{n+s}
-
\frac{ C_N(n)}n
\right)\\
&=
\sum_{0<|n|<N}
\bigl(
 C_N(n)- C_N(0)
\bigr)
\left(
\frac1{n+s}-\frac1n
\right),
\end{align*}
For \(s\) in a fixed compact subset of \(\mathcal{L}\),
\[
\left|
\frac1{n+s}-\frac1n
\right|
=
\frac{|s|}{|n|\,|n+s|}
\leqslant\frac{2|s|}{n^2}.
\]
Therefore
\[
|D_N(s)|
\leqslant2|s|\cdot
\sum_{0<|n|<N}
\frac{
| C_N(n)- C_N(0)|
}{
n^2
}.
\]
By repeating the discussion in Lemma \ref{lem:higher-pole-moving-window}, we immediately obtain the desired result.
\end{proof}

We shall also use the following standard truncation estimate for
monotangent functions.

\begin{lem}[Truncated monotangent kernels]
\label{lem:truncated-monotangent-kernels}
For every compact set $K\subset\mathbb C\setminus\mathbb Z$, one has
\begin{align}
\mathcal{T}_N(k;s)
=
\mathcal T(k;s)
+
\begin{cases}
O_{K}(N^{-1}),&k=1,\\[2mm]
O_{k,K}(N^{1-k}),&k\geqslant2,
\end{cases}
\label{eq:truncated-monotangent-kernels}
\end{align}
uniformly for \(s\in K\).
\end{lem}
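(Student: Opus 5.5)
The plan is to treat the two cases \(k\geqslant2\) and \(k=1\) separately, in each case writing the difference \(\mathcal T_N(k;s)-\mathcal T(k;s)\) as an explicit tail and bounding it uniformly on \(K\). Since \(K\) is compact, there is \(R>0\) with \(|s|\leqslant R\) for all \(s\in K\). For \(|n|\geqslant 2R\) this gives \(|n+s|\geqslant |n|-|s|\geqslant |n|/2\). We may take \(N\geqslant 2R+1\); the finitely many smaller \(N\) are absorbed into the implied constant, since both sides are continuous, hence bounded, on \(K\).

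\textbf{Case \(k\geqslant2\).} By absolute convergence, the difference is the tail
\[
\mathcal T(k;s)-\mathcal T_N(k;s)=\sum_{|n|\geqslant N}\frac1{(n+s)^k}.
\]
Using \(|n+s|\geqslant|n|/2\), this is bounded by
\[
2\cdot 2^k\sum_{n\geqslant N}n^{-k}\leqslant 2^{k+1}\left(N^{-k}+\int_N^\infty t^{-k}\,\mathrm dt\right)=O_{k}(N^{1-k}),
\]
uniformly in \(s\in K\).

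\textbf{Case \(k=1\).} Here the defining limit is only symmetric, so the terms must first be paired. For \(n\geqslant1\) we have
\[
\frac1{n+s}+\frac1{-n+s}=\frac{2s}{s^2-n^2}.
\]
Hence
\[
\mathcal T_N(1;s)=\frac1s+\sum_{n=1}^{N-1}\frac{2s}{s^2-n^2}.
\]
This pairwise-grouped series converges absolutely. Its limit as \(N\to\infty\) is, by Definition~\ref{def:symmetric-truncated-multitangent}, exactly \(\mathcal T(1;s)=\pi\cot(\pi s)\); this is the classical Mittag-Leffler expansion, so the limit exists. It follows that
\[
\mathcal T(1;s)-\mathcal T_N(1;s)=\sum_{n\geqslant N}\frac{2s}{s^2-n^2}.
\]
For \(n\geqslant N\geqslant 2R\) we have \(|s^2-n^2|\geqslant n^2-R^2\geqslant \tfrac34 n^2\). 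The tail is therefore at most
\[
\frac{8R}{3}\sum_{n\geqslant N}n^{-2}=O_K(N^{-1}).
\]

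There is no real obstacle here. The only points needing care are the pairing in the \(k=1\) case, since the unpaired series diverges, and the uniformity of the lower bound \(|n+s|\gg|n|\) on \(K\). Both are handled by the choice of \(R\) above.
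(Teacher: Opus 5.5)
Your proof is correct and follows the same route as the paper: for \(k\geqslant2\) you estimate the absolutely convergent tail directly, and for \(k=1\) you pair \(n\) with \(-n\) so that each paired term is \(O_K(n^{-2})\). You also make explicit the uniform bound \(|n+s|\geqslant|n|/2\) on \(K\) and the handling of small \(N\), which the paper leaves implicit.
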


\begin{proof}
For \(k\geqslant2\), the result follows directly from the absolute
convergence of the monotangent series. For \(k=1\), pairing the positive and negative tails gives
\[
\mathcal T(1;s)-\mathcal{T}_N(1;s)
=
\sum_{n=N}^{\infty}
\left(
\frac1{n+s}+\frac1{s-n}
\right).
\]
For \(s\in K\), the expression in parentheses is \(O_K(n^{-2})\).
Hence the tail is \(O_K(N^{-1})\).
\end{proof}

\subsection{Moving-window integrals}

\begin{lem}[One-sided moving-window asymptotic]
\label{lem:one-sided-moving-window}
Let $\boldsymbol{\alpha},\boldsymbol{\beta}$ be fixed indices, possibly empty, and let \(a,b\geqslant0\). Then
\begin{align}
&
\sum_{0<n<N}
\frac{
\zeta_{(-N-n,0),a}(\boldsymbol{\alpha})
\zeta_{(0,N-n),b}(\boldsymbol{\beta})
}{n}
\nonumber\\
&=
(-1)^{|\boldsymbol{\alpha}|+a}
\Biggl[
T_N\cdot
\zeta_{*,a}^{T_N}(\overleftarrow{\boldsymbol{\alpha}})
\zeta_{*,b}^{T_N}(\boldsymbol{\beta})
\nonumber\\
&\qquad\qquad+
\int_0^1
\frac{
\zeta_{*,a}^{T_N+\log(1+t)}(\overleftarrow{\boldsymbol{\alpha}})
\zeta_{*,b}^{T_N+\log(1-t)}(\boldsymbol{\beta})
-
\zeta_{*,a}^{T_N}(\overleftarrow{\boldsymbol{\alpha}})
\zeta_{*,b}^{T_N}(\boldsymbol{\beta})
}{t}\,\de t
\Biggr]
+o(1).
\label{eq:one-sided-moving-window}
\end{align}
The integral is understood as an improper integral at \(t=1\).
\end{lem}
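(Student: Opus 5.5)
First I rewrite the negative factor through the reversal identity \eqref{eq:shifted-negative-reversal} of Lemma~\ref{lem:shifted-truncation-estimates}: for $0<n<N$,
\[
\zeta_{(-N-n,0),a}(\boldsymbol{\alpha})=(-1)^{|\boldsymbol{\alpha}|+a}\zeta_{(0,N+n),a}(\overleftarrow{\boldsymbol{\alpha}}).
\]
This produces the global sign, and the sum becomes
\[
\sum_{0<n<N}\frac{F(N+n)\,G(N-n)}{n},\qquad F(M):=\zeta_{(0,M),a}(\overleftarrow{\boldsymbol{\alpha}}),\quad G(M):=\zeta_{(0,M),b}(\boldsymbol{\beta}).
\]
Next I apply Lemma~\ref{lem:shifted-regularized-approximation} in the form \eqref{eq:shifted-approximation-LM}:
\[
F(M)=P(\log M+\gamma)+O\bigl((1+\log M)^A/M\bigr),\qquad G(M)=Q(\log M+\gamma)+O\bigl((1+\log M)^A/M\bigr),
\]
where $P(X)=\zeta_{*,a}^X(\overleftarrow{\boldsymbol{\alpha}})$ and $Q(X)=\zeta_{*,b}^X(\boldsymbol{\beta})$ are fixed polynomials. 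Replacing $F(N+n)$ costs $O((\log N)^{A'}/N)$ per term, since $N+n\geq N$. Summed against $1/n$, this gives $O((\log N)^{A''}/N)=o(1)$.

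The factor $G(N-n)$ is delicate near $n=N$, where $N-n$ is small. I split the range at $n=N-\lfloor N/\log^{B}N\rfloor$, or more simply at $n\le N-R$ with $R$ slowly growing. In the bulk range the error is $O\bigl(\sum_{n<N-R}(\log N)^A/(n(N-n))\bigr)$. Using $\frac{1}{n(N-n)}=\frac1N\bigl(\frac1n+\frac1{N-n}\bigr)$, this is $O((\log N)^{A+1}/N)$. In fact this bound already holds for the whole range $0<n<N$ once one uses $G(M)-Q(\log M+\gamma)=O((1+\log M)^A/M)$ for all $M\ge1$, which follows from the bounded-$M$ case trivially. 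So no splitting is really needed. The main sum becomes
\[
\sum_{0<n<N}\frac{P(\log(N+n)+\gamma)\,Q(\log(N-n)+\gamma)}{n}+o(1).
\]
I then write $\log(N\pm n)+\gamma=L_N+\log(1\pm t)$ with $t=n/N$ and $L_N:=\log N+\gamma$. Since $T_N-L_N=O(1/N)$ and the polynomials have logarithmic size, I may replace $L_N$ by $T_N$ at a cost of $o(1)$, provided the replacement is controlled uniformly. That control needs care near $t=1$, where $\log(1-t)$ is as large as $-\log N$, but the values remain polynomially logarithmic, so the mean value theorem gives $O((\log N)^C/N)$ per term times $T_N$, which is fine.

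Now put $\Phi_N(t):=P(T_N+\log(1+t))\,Q(T_N+\log(1-t))$. Then
\[
\sum_{0<n<N}\frac{\Phi_N(n/N)}{n}=\Phi_N(0)\,T_N+\sum_{0<n<N}\frac1N\cdot\frac{\Phi_N(n/N)-\Phi_N(0)}{n/N}.
\]
This gives the first bracketed term exactly. The remaining task, and the main obstacle, is to show that the Riemann sum of
\[
g_N(t)=\frac{\Phi_N(t)-\Phi_N(0)}{t}
\]
converges to its integral with $o(1)$ error, even though $g_N$ depends on $N$ through $T_N$ and has a logarithmic singularity $\log^j(1-t)$ at $t=1$. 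My plan is to expand $\Phi_N(t)$ as a polynomial in $T_N$ with coefficients $p_i(\log(1+t))\,q_j(\log(1-t))$. Each difference quotient is then a fixed function $h(t)=\bigl(\log^i(1+t)\log^j(1-t)-\delta_{i0}\delta_{j0}\bigr)/t$, which is smooth on $[0,1)$, bounded near $0$, and integrable at $1$ with monotone logarithmic blow-up.

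For each such fixed $h$ I compare the Riemann sum with the integral. On $[0,1-1/N]$, the Euler–Maclaurin or monotonicity comparison gives error $O(\|h'\|\text{-type}/N)$; since $h'=O(1/(1-t))$, this totals $O((\log N)^{j+1}/N)$. The last interval contributes $O(\log^j N/N)$. The error is therefore $O((\log N)^{C}/N)$, and after multiplication by $T_N^{m}$ it is still $o(1)$. Reassembling the powers of $T_N$ yields exactly the stated improper integral, which completes \eqref{eq:one-sided-moving-window}.
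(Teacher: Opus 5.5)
Your proposal is correct and follows essentially the same route as the paper. Both arguments use the reversal identity, the polynomial replacement via Lemma~\ref{lem:shifted-regularized-approximation}, the splitting off of $T_N\Phi(0)$, and a Riemann-sum comparison controlled by logarithmic bounds on the difference quotient and its derivative near $t=1$. The differences are only cosmetic. You avoid the paper's split at $N-\lfloor N^{1/2}\rfloor$ by using $\sum_{0<n<N}\frac{1}{n(N-n)}=\frac{2T_N}{N}$, and you replace $\log N+\gamma$ by $T_N$ before the Riemann-sum step rather than after it. You then expand in powers of $T_N$, which lets you work with fixed integrands instead of bounds uniform in $X$.
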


\begin{proof}
By \eqref{eq:shifted-negative-reversal}, the left-hand side of
\eqref{eq:one-sided-moving-window} is
\begin{align}
S_N
:=(-1)^{|\boldsymbol{\alpha}|+a}
\sum_{0<n<N}
\frac{
\zeta_{(0,N+n),a}(\overleftarrow{\boldsymbol{\alpha}})
\zeta_{(0,N-n),b}(\boldsymbol{\beta})
}{n}.
\label{eq:def-one-sided-SN}
\end{align}
Using Lemma~\ref{lem:shifted-regularized-approximation}, we first
replace the two finite multiple zeta values in
\eqref{eq:def-one-sided-SN} by
\[
\zeta_{*,a}^{\log(N+n)+\gamma}(\overleftarrow{\boldsymbol{\alpha}}),
\qquad
\zeta_{*,b}^{\log(N-n)+\gamma}(\boldsymbol{\beta}).
\]
To justify this uniformly, take
\[
K_N:=\lfloor N^{1/2}\rfloor
\]
and split the sum into
\[
1\leqslant n\leqslant N-K_N
\]
and
\[
N-K_N<n<N.
\]

On the first range, the total replacement error is bounded by
\[
(1+\log N)^A
\sum_{n=1}^{N-K_N}
\frac1n
\left(
\frac1{N+n}+\frac1{N-n}
\right)
=
o(1).
\]
On the boundary range, there are \(O(N^{1/2})\) terms,
\(1/n=O(N^{-1})\), and every factor has at most polylogarithmic
growth. Hence that range also contributes \(o(1)\). Therefore
\begin{align}
S_N
=
(-1)^{|\boldsymbol{\alpha}|+a}
\sum_{0<n<N}
\frac{
\zeta_{*,a}^{\log(N+n)+\gamma}(\overleftarrow{\boldsymbol{\alpha}})
\zeta_{*,b}^{\log(N-n)+
\gamma}(\boldsymbol{\beta})
}{n}
+o(1).
\label{eq:one-sided-polynomial-replacement}
\end{align}
Since
\[
\log(N\pm n)
=
\log N+\log\left(1\pm\frac nN\right),
\]
define
\[
F_X(t)
:=
\zeta_{*,a}^{X+\log(1+t)}(\overleftarrow{\boldsymbol{\alpha}})
\zeta_{*,b}^{X+\log(1-t)}(\boldsymbol{\beta})
.
\]
Then
\[
S_N
=
(-1)^{|\boldsymbol{\alpha}|+a}
\sum_{0<n<N}
\frac1n
F_{\log N+\gamma}\left(\frac{n}{N}\right)
+o(1).
\]
Since
\[
F_X(0)
=
\zeta_{*,a}^{X}(\overleftarrow{\boldsymbol{\alpha}})
\zeta_{*,b}^{X}(\boldsymbol{\beta}),
\]
we have
\begin{align*}
\sum_{0<n<N}
\frac1nF_{\log N+\gamma}\left(\frac nN\right)
={}&
T_N\cdot
\zeta_{*,a}^{\log N+\gamma}(\overleftarrow{\boldsymbol{\alpha}})
\zeta_{*,b}^{\log N+\gamma}(\boldsymbol{\beta})\\
&+
\frac1N
\sum_{n=1}^{N-1}
G_{\log N+\gamma}\left(\frac nN\right),
\end{align*}
where
\[
G_X(t):=\frac{F_X(t)-F_X(0)}t.
\]

The singularity of \(G_X(t)\) at \(t=0\) is removable. Near \(t=1\),
the function has at most polynomial growth in
\(|\log(1-t)|\). Since \(\zeta_{*,a}^{X}(\overleftarrow{\boldsymbol{\alpha}})\) and \(
\zeta_{*,b}^{X}(\boldsymbol{\beta})\) are fixed polynomials, there are
constants \(C,D>0\) such that
\[
|G_X(t)|
\leqslant
C\bigl(1+|X|+|\log(1-t)|\bigr)^D
\]
and
\[
|G_X'(t)|
\leqslant
\frac{
C\bigl(1+|X|+|\log(1-t)|\bigr)^D
}{1-t}.
\]
It follows that
\[
\frac1N
\sum_{n=1}^{N-1}
G_{\log N+\gamma}\left(\frac nN\right)
=
\int_0^1G_{\log N+\gamma}(t)\,\de t+o(1).
\]

Finally,
\[
T_N=\log N+\gamma+O(N^{-1}).
\]
Since all functions of \(\log N+\gamma\) occurring above are polynomials in
\(\log N+\gamma\), replacing \(\log N+\gamma\) by \(T_N\) changes the result only by
\(o(1)\). Substitution into \eqref{eq:def-one-sided-SN} gives
\eqref{eq:one-sided-moving-window}.
\end{proof}

The symmetric sum needed for the simple-pole contribution is the
following direct consequence.

\begin{cor}[Symmetric moving-window asymptotic]
\label{cor:symmetric-moving-window}
Under the assumptions of
Lemma~\ref{lem:one-sided-moving-window}, one has
\begin{align}
&
\sum_{0<|n|<N}
\frac{
\zeta_{(-N-n,0),a}(\boldsymbol{\alpha})
\zeta_{(0,N-n),b}(\boldsymbol{\beta})
}{n}
\nonumber\\
&=
(-1)^{|\boldsymbol{\alpha}|+a}
\int_0^1\frac1t
\Bigl[
\zeta_{*,a}^{T_N+\log(1+t)}(\overleftarrow{\boldsymbol{\alpha}})
\zeta_{*,b}^{T_N+\log(1-t)}(\boldsymbol{\beta})
\nonumber\\
&\hspace{31mm}
-
\zeta_{*,a}^{T_N+\log(1-t)}(\overleftarrow{\boldsymbol{\alpha}})
\zeta_{*,b}^{T_N+\log(1+t)}(\boldsymbol{\beta})
\Bigr]\,\de t
+o(1).
\label{eq:symmetric-moving-window}
\end{align}
\end{cor}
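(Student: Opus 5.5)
We split the symmetric sum into its positive part $0<n<N$ and its negative part $-N<n<0$. The positive part is exactly the left-hand side of Lemma~\ref{lem:one-sided-moving-window}. For the negative part we substitute $n=-m$ with $0<m<N$. The summand becomes
\[
-\frac{\zeta_{(-N+m,0),a}(\boldsymbol{\alpha})\,\zeta_{(0,N+m),b}(\boldsymbol{\beta})}{m}.
\]
By \eqref{eq:shifted-negative-reversal} this equals
\[
-(-1)^{|\boldsymbol{\alpha}|+a}\,\frac{\zeta_{(0,N-m),a}(\overleftarrow{\boldsymbol{\alpha}})\,\zeta_{(0,N+m),b}(\boldsymbol{\beta})}{m}.
\]
This has the same shape as the sum $S_N$ in \eqref{eq:def-one-sided-SN}, except that the roles of $N+n$ and $N-n$ are exchanged between the two factors.

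We then rerun the proof of Lemma~\ref{lem:one-sided-moving-window} with this exchange. That proof never used which factor carries $N+n$ and which carries $N-n$: the replacement step via Lemma~\ref{lem:shifted-regularized-approximation} is symmetric, and so are the split at $K_N=\lfloor N^{1/2}\rfloor$ and the Riemann-sum estimates for $G_X$. So, with
\[
\widetilde F_X(t):=\zeta_{*,a}^{X+\log(1-t)}(\overleftarrow{\boldsymbol{\alpha}})\,\zeta_{*,b}^{X+\log(1+t)}(\boldsymbol{\beta}),
\]
the negative part equals
\[
-(-1)^{|\boldsymbol{\alpha}|+a}\Bigl[T_N\,\zeta_{*,a}^{T_N}(\overleftarrow{\boldsymbol{\alpha}})\zeta_{*,b}^{T_N}(\boldsymbol{\beta})+\int_0^1\frac{\widetilde F_{T_N}(t)-\widetilde F_{T_N}(0)}{t}\,\de t\Bigr]+o(1).
\]
The bounds on $\widetilde G_X$ and $\widetilde G_X'$ near $t=1$ hold verbatim, since $\log(1-t)$ now sits in the other factor.

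We add the two parts. Since $F_X(0)=\widetilde F_X(0)$, the boundary terms $T_N\,\zeta_{*,a}^{T_N}\zeta_{*,b}^{T_N}$ cancel exactly, and so do the subtracted constants inside the integrands. What remains is
\[
(-1)^{|\boldsymbol{\alpha}|+a}\int_0^1\frac{F_{T_N}(t)-\widetilde F_{T_N}(t)}{t}\,\de t,
\]
which is the right-hand side of \eqref{eq:symmetric-moving-window}. This combined integral converges: at $t=0$ the numerator vanishes, and at $t=1$ the integrand has only logarithmic growth. Moreover, each of the two separate integrals converges, so splitting it back into two pieces is legitimate.

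The only point needing care is justifying that the one-sided lemma applies with the two truncation directions swapped. We would state this explicitly, noting that its proof is symmetric in the two factors; no other step seems delicate.
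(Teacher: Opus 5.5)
Your proposal is correct and is essentially the paper's argument: the paper pairs $n$ with $-n$, applies \eqref{eq:shifted-negative-reversal}, and runs the replacement and Riemann-sum steps of Lemma~\ref{lem:one-sided-moving-window} on the resulting difference, after which the two $T_N$ terms cancel. You treat the two halves separately, with the roles of $N+n$ and $N-n$ swapped in the negative half, and then add them; this is the same computation organized slightly differently.
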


\begin{proof}
Pairing the summands corresponding to \(n\) and \(-n\), and using
\eqref{eq:shifted-negative-reversal}, gives
\begin{align*}
&
\sum_{0<|n|<N}
\frac{
\zeta_{(-N-n,0),a}(\boldsymbol{\alpha})
\zeta_{(0,N-n),b}(\boldsymbol{\beta})
}{n}
\\
&=
(-1)^{|\boldsymbol{\alpha}|+a}
\sum_{0<n<N}\frac1n
\Bigl[
\zeta_{(0,N+n),a}(\overleftarrow{\boldsymbol{\alpha}})
\zeta_{(0,N-n),b}(\boldsymbol{\beta})
\\
&\hspace{43mm}
-
\zeta_{(0,N-n),a}(\overleftarrow{\boldsymbol{\alpha}})
\zeta_{(0,N+n),b}(\boldsymbol{\beta})
\Bigr].
\end{align*}
Applying the polynomial replacement and Riemann-sum argument from the
proof of Lemma~\ref{lem:one-sided-moving-window} to this difference
gives \eqref{eq:symmetric-moving-window}. The two harmonic terms
containing \(T_N\) cancel.
\end{proof}

\subsection{Asymptotic reduction of the symmetric truncation}

For \(1\leqslant j\leqslant r\) and \(0\leqslant m\leqslant k_j-1\), define
\begin{align}
C_{j,m}^{X}(\bfk)
:=
\sum_{\substack{a+b=m\\a,b\geqslant0}}
(-1)^{|\bfk_{[1,j)}|+a}
\zeta_{*,a}^{X}
\bigl(\overleftarrow{\bfk_{[1,j)}}\bigr)
\zeta_{*,b}^{X}
\bigl(\bfk_{(j,r]}\bigr).
\label{eq:def-CjmX}
\end{align}
Define
\begin{align}
\mathcal R_{\bfk}(X,Y)
:=
\sum_{j=1}^{r}
\sum_{\substack{a+b=k_j-1\\a,b\geqslant0}}
(-1)^{|\bfk_{[1,j)}|+a}
\zeta_{*,a}^{X}
\bigl(\overleftarrow{\bfk_{[1,j)}}\bigr)
\zeta_{*,b}^{Y}
\bigl(\bfk_{(j,r]}\bigr).
\label{eq:def-residue-polynomial}
\end{align}
Then put
\begin{align}
C_1^{X}(\bfk)
:=
\mathcal R_{\bfk}(X,X)
=
\sum_{j=1}^{r}
C_{j,k_j-1}^{X}(\bfk),
\label{eq:def-C1X}
\end{align}
and
\begin{align}
C_0^{X}(\bfk)
:=
\int_0^1
\frac{
\mathcal R_{\bfk}
(X+\log(1+t),X+\log(1-t))
-
\mathcal R_{\bfk}
(X+\log(1-t),X+\log(1+t))
}{t}\,\de t.
\label{eq:def-C0X}
\end{align}
The integral converges because the numerator vanishes at \(t=0\), and
has at most polynomial growth in \(\log(1-t)\) as \(t\to1^{-}\).
\begin{re}
    The quantity $C_0^{X}(\bfk)$ can be regarded as a polynomial in $X$ whose coefficients are alternating MZVs. Indeed, we will eventually prove that 
    $$C_0^{X}(\bfk)=\begin{cases}
        \frac{(i\pi)^r}{r!}, \bfk=\{1\}^r,2\mid r,\\
        0,\mathrm{otherwise}.
    \end{cases}$$
    see Section 6.
\end{re}

\begin{thm}[Asymptotic reduction]
\label{thm:asymptotic-reduction}
Let $\bfk=(k_1,\cdots,k_r)$ be an index, and let $K\subset\mathcal{L}$ be compact. Then, uniformly for \(s\in K\),
\begin{align}
\mathcal T_N(\bfk;s)
={}&
C_0^{T_N}(\bfk)
+
C_1^{T_N}(\bfk)\mathcal T(1;s)
\nonumber\\
&+
\sum_{j=1}^{r}
\sum_{m=0}^{k_j-2}
C_{j,m}^{T_N}(\bfk)
\mathcal T(k_j-m;s)
+
o_{\bfk,K}(1).
\label{eq:asymptotic-reduction}
\end{align}
If \(k_j=1\), the corresponding inner sum is empty.
\end{thm}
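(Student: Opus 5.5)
The plan is to start from the Finite Reduction Formula (Theorem~\ref{thm:finite_pf_reduction}) and treat each pair $(j,m)$ separately. The coefficient of $(n_j+s)^{-(k_j-m)}$ is a finite sum over $a+b=m$ of moving coefficients $C_N(n)$ as in \eqref{eq:def-moving-coefficient}, with $\boldsymbol{\alpha}=\bfk_{[1,j)}$ and $\boldsymbol{\beta}=\bfk_{(j,r]}$. It is therefore enough to find the asymptotics of each one-dimensional sum $\sum_{-N<n<N}C_N(n)(n+s)^{-k}$ with $k=k_j-m$, and then sum over $(j,m,a,b)$. Only finitely many such terms occur, so errors of size $o(1)$ add up to $o_{\bfk,K}(1)$.

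\emph{Higher-order poles} ($m\leqslant k_j-2$, so $k\geqslant 2$). Apply Lemma~\ref{lem:higher-pole-moving-window} to replace the sum by $C_N(0)\,\mathcal T_N(k;s)$, with error $O((1+\log N)^d/N)$. Next, Lemma~\ref{lem:truncated-monotangent-kernels} replaces $\mathcal T_N(k;s)$ by $\mathcal T(k;s)$, with error $O(N^{1-k})$. This error is multiplied by $C_N(0)=O((1+\log N)^d)$, so the product is still $o(1)$.

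It remains to identify $C_N(0)$. By \eqref{eq:shifted-negative-reversal},
$C_N(0)=(-1)^{|\boldsymbol{\alpha}|+a}\zeta_{(0,N),a}(\overleftarrow{\boldsymbol{\alpha}})\zeta_{(0,N),b}(\boldsymbol{\beta})$. Lemma~\ref{lem:shifted-regularized-approximation} (\eqref{eq:shifted-approximation-TM}) turns each factor into $\zeta^{T_N}_{*,\cdot}$ up to $O((\log N)^A/N)$. Because the other factor grows only polylogarithmically, the product error is $o(1)$. Summing over $a+b=m$ gives exactly $C^{T_N}_{j,m}(\bfk)\,\mathcal T(k_j-m;s)$.

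\emph{Simple pole} ($m=k_j-1$). Apply Lemma~\ref{lem:simple-pole-moving-window}. It yields $C_N(0)\,\mathcal T_N(1;s)$ plus the symmetric sum $\sum_{0<|n|<N}C_N(n)/n$, up to $O((1+\log N)^d/N)$. The first piece is handled as above, using the $k=1$ case of Lemma~\ref{lem:truncated-monotangent-kernels}. Summing over $j$ and over $a+b=k_j-1$ produces $C_1^{T_N}(\bfk)\mathcal T(1;s)$ by \eqref{eq:def-C1X}.

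For the second piece, apply Corollary~\ref{cor:symmetric-moving-window} to each $(j,a,b)$. The result is $(-1)^{|\boldsymbol{\alpha}|+a}$ times $\int_0^1 t^{-1}[\ldots]\,\de t$, where the bracket contains $\zeta_{*,a}^{T_N+\log(1\pm t)}\zeta_{*,b}^{T_N+\log(1\mp t)}$. Summing over $j$ and $a+b=k_j-1$, the bracket is exactly the numerator of \eqref{eq:def-C0X} with $X=T_N$. This holds because $\mathcal R_{\bfk}(X,Y)$ is built precisely with the left factor evaluated at the first argument and the right factor at the second. Linearity of the integral then gives $C_0^{T_N}(\bfk)+o(1)$.

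\emph{Uniformity in $s$.} The main obstacle is that Lemma~\ref{lem:simple-pole-moving-window} is stated with a constant depending on $s$, while Lemma~\ref{lem:truncated-monotangent-kernels} requires $K$ to avoid $\mathbb Z$. My plan is to establish the estimate uniformly on compact $K\subset\mathcal L$ away from $0$, using the explicit bound $2|s|/n^2$ from the proof of Lemma~\ref{lem:simple-pole-moving-window}. Near $s=0$, I would note that both sides of \eqref{eq:asymptotic-reduction} have the same poles. The $n=0$ terms of the truncated sums coincide exactly with $C_N(0)\,s^{-(k_j-m)}$ before any approximation, so the approximation only touches the terms with $n\neq0$. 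These are holomorphic in $s$ on $\mathcal L$. One then subtracts this common principal part and extends the estimate to all of $K$ by the maximum principle on a small circle around $0$.

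A secondary point is to check that the Riemann-sum errors in Corollary~\ref{cor:symmetric-moving-window} are really $o(1)$ after multiplication by the polylogarithmic factors. I take this as given by that corollary.
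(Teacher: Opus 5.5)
Your treatment of the main terms is correct and is essentially the paper's own proof. For $m\leqslant k_j-2$ you use the finite reduction formula, Lemma~\ref{lem:higher-pole-moving-window}, the reversal identity, Lemma~\ref{lem:shifted-regularized-approximation} and Lemma~\ref{lem:truncated-monotangent-kernels}. For $m=k_j-1$ you use Lemma~\ref{lem:simple-pole-moving-window} and Corollary~\ref{cor:symmetric-moving-window}. The bookkeeping into $C^{T_N}_{j,m}(\bfk)$, $C_1^{T_N}(\bfk)$ and $C_0^{T_N}(\bfk)$ matches the paper's. Your uniformity argument on compact sets avoiding $0$, via the explicit bound $2|s|/n^2$, is also fine.

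The step that fails is the extension to a neighbourhood of $s=0$: the two sides do \emph{not} have the same principal part there. The replacements $C_N(n)\to C_N(0)$ and $\mathcal T_N(q;s)\to\mathcal T(q;s)$ do only produce errors that are holomorphic on $\mathcal L$. However, the remaining replacement of $C_N(0)$ by $(-1)^{|\boldsymbol{\alpha}|+a}\zeta^{T_N}_{*,a}(\overleftarrow{\boldsymbol{\alpha}})\zeta^{T_N}_{*,b}(\boldsymbol{\beta})$ is multiplied by $\mathcal T(q;s)$, which has a pole of order $q$ at $0$.

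Take $\bfk=(2,2)$. On the left, the terms $n_1=0$ and $n_2=0$ give $s^{-2}$-coefficient $2\zeta_{(0,N)}(2)$. On the right it is $C_{1,0}^{T_N}(\bfk)+C_{2,0}^{T_N}(\bfk)=2\zeta(2)$. So the difference of the two sides has a double pole at $0$ with coefficient $-2\sum_{n\geqslant N}n^{-2}\neq0$. For every $N$ it is unbounded on every punctured neighbourhood of $0$, so no maximum-principle argument can help. The uniform estimate is simply false on compact sets containing $0$.

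The statement has to be read with $K\subset\mathcal L\setminus\{0\}$. The paper's proof tacitly does this, since Lemma~\ref{lem:truncated-monotangent-kernels} requires $K\subset\mathbb C\setminus\mathbb Z$. That version is all that is used later: the proof of Theorem~\ref{thm:regularized-reduction} fixes $s\in\mathcal L$, $s\neq0$, and then extends by the identity theorem. Drop the near-$0$ step and your proof is complete.
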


\begin{proof}
Starting from the finite reduction formula of
Theorem~\ref{thm:finite_pf_reduction}, for fixed \(j,a,b\), put
\[
A_{j,a,N}(n)
:=
\zeta_{(-N-n,0),a}(\bfk_{[1,j)}),
\qquad
B_{j,b,N}(n)
:=
\zeta_{(0,N-n),b}(\bfk_{(j,r]}).
\]
We first consider
\[
0\leqslant m\leqslant k_j-2,
\qquad
a+b=m.
\]
Then
\[
q:=k_j-m\geqslant2.
\]
By Lemma~\ref{lem:higher-pole-moving-window},
\begin{align}
\sum_{-N<n<N}
\frac{
A_{j,a,N}(n)B_{j,b,N}(n)
}{
(n+s)^q
}
={}&
A_{j,a,N}(0)B_{j,b,N}(0)
\mathcal{T}_N(q;s)
+o_{\bfk,K}(1).
\label{eq:higher-pole-in-reduction}
\end{align}
By
\eqref{eq:shifted-negative-reversal} and
Lemma~\ref{lem:shifted-regularized-approximation},
\begin{align*}
A_{j,a,N}(0)
&=
(-1)^{|\boldsymbol{\alpha}_j|+a}
\zeta_{*,a}^{T_N}
(\overleftarrow{\boldsymbol{\alpha}_j})
+
O\left(\frac{(1+\log N)^A}{N}\right),\\
B_{j,b,N}(0)
&=
\zeta_{*,b}^{T_N}
(\boldsymbol{\beta}_j)
+
O\left(\frac{(1+\log N)^A}{N}\right).
\end{align*}
Using Lemma~\ref{lem:truncated-monotangent-kernels}, we therefore get
\begin{align}
\sum_{-N<n<N}
\frac{
A_{j,a,N}(n)B_{j,b,N}(n)
}{
(n+s)^q
}
={}&
(-1)^{|\boldsymbol{\alpha}_j|+a}
\zeta_{*,a}^{T_N}
(\overleftarrow{\boldsymbol{\alpha}_j})
\zeta_{*,b}^{T_N}
(\boldsymbol{\beta}_j)
\mathcal T(q;s)
\nonumber\\
&+
o_{\bfk,K}(1).
\label{eq:higher-pole-final-reduction}
\end{align}
Summing over \(a+b=m\) gives
\[
C_{j,m}^{T_N}(\bfk)
\mathcal T(k_j-m;s)
+
o_{\bfk,K}(1).
\]
It remains to consider the simple-pole case $m=k_j-1$. By Lemma~\ref{lem:simple-pole-moving-window},
\begin{align}
&
\sum_{-N<n<N}
\frac{
A_{j,a,N}(n)B_{j,b,N}(n)
}{n+s}
\nonumber\\
&=
A_{j,a,N}(0)B_{j,b,N}(0)
\mathcal{T}_N(1;s)
+
\sum_{0<|n|<N}
\frac{
A_{j,a,N}(n)B_{j,b,N}(n)
}{n}
+
o_{\bfk,K}(1).
\label{eq:simple-pole-reduction-step}
\end{align}
The first term on the right-hand side satisfies
\begin{align}
A_{j,a,N}(0)B_{j,b,N}(0)
\mathcal{T}_N(1;s)
\nonumber=
(-1)^{|\boldsymbol{\alpha}_j|+a}
\zeta_{*,a}^{T_N}
(\overleftarrow{\boldsymbol{\alpha}_j})
\zeta_{*,b}^{T_N}
(\boldsymbol{\beta}_j)
\mathcal T(1;s)
+
o_{\bfk,K}(1).
\label{eq:simple-pole-monotangent-part}
\end{align}
After summing over \(j\) and
\(a+b=k_j-1\), this gives
\[
C_1^{T_N}(\bfk)\mathcal T(1;s)
+
o_{\bfk,K}(1).
\]
For the second term in
\eqref{eq:simple-pole-reduction-step}, Corollary
\ref{cor:symmetric-moving-window} gives
\begin{align*}
&
\sum_{0<|n|<N}
\frac{
A_{j,a,N}(n)B_{j,b,N}(n)
}{n}
\\
&=
(-1)^{|\boldsymbol{\alpha}_j|+a}
\int_0^1\frac1t
\Bigl[
\zeta_{*,a}^{T_N+\log(1+t)}
(\overleftarrow{\boldsymbol{\alpha}_j})
\zeta_{*,b}^{T_N+\log(1-t)}
(\boldsymbol{\beta}_j)
\\
&\hspace{28mm}
-
\zeta_{*,a}^{T_N+\log(1-t)}
(\overleftarrow{\boldsymbol{\alpha}_j})
\zeta_{*,b}^{T_N+\log(1+t)}
(\boldsymbol{\beta}_j)
\Bigr]\,\de t
+o(1).
\end{align*}
Summing this identity over \(j\) and
\(a+b=k_j-1\) produces exactly
\[
C_0^{T_N}(\bfk)+o(1).
\]
Combining the higher-order-pole and simple-pole contributions with
Theorem~\ref{thm:finite_pf_reduction} proves
\eqref{eq:asymptotic-reduction}.
\end{proof}

\begin{re}
At this stage, the coefficients in
\eqref{eq:asymptotic-reduction} are polynomials evaluated at the
growing parameter \(T_N\). In the next section, this asymptotic
identity will be combined with
\[
\mathcal T_N(\bfk;s)
=
\mathcal T_*^{T_N}(\bfk;s)+o(1)
\]
and a polynomial-identity argument to obtain the exact reduction
formula for an arbitrary regularization parameter \(T\).
\end{re}

\section{The Reduction Theorem and Regularized MZV Relations}

In this section, we complete the proof of the reduction theorem.
We first evaluate the regularized multitangent functions associated
with the exceptional pure-one indices
\[
\{1\}^r=(\underbrace{1,\cdots,1}_{r})
\]
by means of a sine-quotient generating function. We then establish the
decay of all remaining regularized multitangent functions along the
positive and negative imaginary directions.

Combining the asymptotic expansion from Section~3 with the
asymptotic reduction formula from Section~5, and using the fact that
all relevant expressions are polynomials in the regularization
parameter, we obtain an exact reduction formula for an arbitrary
parameter \(T\). The imaginary-axis limits determine the constant term
and the coefficient of the simple monotangent
\(\mathcal T(1;s)\). Finally, comparison with the local Laurent
expansion gives a family of relations among stuffle-regularized
multiple zeta values.

\subsection{The exceptional pure-one indices}

\begin{lem}[Pure-one generating function]
\label{lem:pure-one-generating-function}
For every \(s\in\mathbb C-\mathbb Z\) and every regularization
parameter \(T\), one has
\begin{align}
\sum_{r=0}^{\infty}
\mathcal T_*^T(\{1\}^r;s)u^r
=
\frac{\sin\pi(s+u)}{\sin\pi s}.
\label{eq:pure-one-generating-function}
\end{align}
Consequently,
\begin{align}
\boxed{\mathcal T_*^T(\{1\}^r;s)
=
\begin{cases}
\displaystyle
(-1)^m\frac{\pi^{2m}}{(2m)!},
&
r=2m,
\\[10pt]
\displaystyle
(-1)^m
\frac{\pi^{2m+1}}{(2m+1)!}
\cot(\pi s),
&
r=2m+1.
\end{cases}
\label{eq:pure-one-explicit}}
\end{align}
In particular, these functions are independent of \(T\).
\end{lem}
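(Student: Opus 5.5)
The plan is to compute the generating function at the level of finite truncations and then pass to the regularized objects with the polynomial-identity argument already used in Corollary~\ref{cor:compatibility-convergent-multitangent} and Proposition~\ref{pro:regularized-periodicity}.

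\textbf{Step 1 (finite product).} For the truncations, elementary symmetric functions give the exact identity
\[
\sum_{r\geqslant0}\mathcal T_N(\{1\}^r;s)u^r=\prod_{-N<n<N}\Bigl(1+\frac{u}{n+s}\Bigr)=\prod_{-N<n<N}\frac{n+s+u}{n+s}.
\]
Pairing the factors $n$ and $-n$ and using the Weierstrass product $\sin\pi z=\pi z\prod_{n\geqslant1}(1-z^2/n^2)$, the right-hand side tends to $\sin\pi(s+u)/\sin\pi s$ as $N\to\infty$. The convergence is locally uniform in $u\in\mathbb C$ for fixed $s\notin\mathbb Z$, because the symmetric partial products $\prod_{0<n<N}(1-z^2/n^2)$ converge locally uniformly in $z$. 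By Cauchy's estimates on a circle $|u|=\rho$, each Taylor coefficient converges, so $\mathcal T_N(\{1\}^r;s)\to[u^r]\,\sin\pi(s+u)/\sin\pi s$ for every fixed $r$.

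\textbf{Step 2 (pass to the regularized function).} Fix $s\in\mathcal L-\{0\}$. Theorem~\ref{thm:symmetric-truncation-asymptotic} gives $\mathcal T_N(\{1\}^r;s)=P_s(T_N)+o(1)$ with $P_s(X):=\mathcal T_*^X(\{1\}^r;s)$, which is a polynomial in $X$. By Step~1, $P_s(T_N)$ has a finite limit. Since $T_N\to+\infty$, $P_s$ must be constant, and its value equals $[u^r]\,\sin\pi(s+u)/\sin\pi s$. This also shows independence of $T$. To extend from $\mathcal L-\{0\}$ to all of $\mathbb C-\mathbb Z$, I use Proposition~\ref{pro:regularized-periodicity}: both sides are $1$-periodic, and every point of $\mathbb C-\mathbb Z$ is an integer translate of a point of $\mathcal L-\{0\}$ or of a point with $\operatorname{Re}s=\pm\frac12$. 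The latter points are handled by meromorphic continuation, since for each fixed $T$ the coefficients are meromorphic in $s$ and the identity holds on an open set.

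\textbf{Step 3 (explicit coefficients).} Expand
\[
\frac{\sin\pi(s+u)}{\sin\pi s}=\cos\pi u+\cot(\pi s)\sin\pi u
\]
in powers of $u$. Even powers give $(-1)^m\pi^{2m}/(2m)!$ and odd powers give $(-1)^m\pi^{2m+1}\cot(\pi s)/(2m+1)!$, which is \eqref{eq:pure-one-explicit}.

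The main obstacle is Step~1's interchange of the limit $N\to\infty$ with coefficient extraction. This is settled by the locally uniform convergence in $u$ of the finite products together with Cauchy's estimates. Beyond that, one only needs to check that Theorem~\ref{thm:symmetric-truncation-asymptotic} is used pointwise in $s$, which it permits.
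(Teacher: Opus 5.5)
Your proposal is correct and follows the paper's route: write the truncated generating function as the finite product $\prod_{-N<n<N}(n+s+u)/(n+s)$ and let $N\to\infty$. The only difference in evaluating the limit is that you pair $n$ with $-n$ and use the Weierstrass product for $\sin\pi z$, whereas the paper rewrites the product with Gamma functions, the reflection formula, and $\Gamma(N+a)/\Gamma(N+b)\sim N^{a-b}$; your Cauchy-estimate argument for coefficientwise convergence and your Step~2 (the polynomial-in-$T_N$ argument as in Corollary~\ref{cor:compatibility-convergent-multitangent}, identifying $\lim_N\mathcal T_N(\{1\}^r;s)$ with $\mathcal T_*^T(\{1\}^r;s)$ and then extending by meromorphy) spell out two steps that the paper's proof leaves implicit.
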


\begin{proof}

Consider the generating function
\[
F_N(u;s)
:=
\sum_{r=0}^{\infty}
\mathcal{T}_N(\{1\}^r;s)u^r .
\]
Although $F_N(u;s)$ is introduced as a power series, for fixed $N$ it is in fact a polynomial of degree at most $2N-1$, since there are only $2N-1$ available integers in the truncation interval. By the elementary symmetric polynomial identity, we have

\begin{align}
F_N(u;s)=\sum_{r=0}^{2N-1}\mathcal{T}_N(\{1\}^r;s)u^r=\prod_{-N<n<N}
\frac{n+s+u}{n+s}=
\frac{
\Gamma(N+s+u)
\Gamma(1-N+s)
}
{
\Gamma(N+s)
\Gamma(1-N+s+u)
}.
\label{eq:gamma-product}
\end{align}
Using the reflection formula
\[
\Gamma(z)\Gamma(1-z)
=
\frac{\pi}{\sin\pi z},
\]
we rewrite the second ratio in \eqref{eq:gamma-product}. We obtain
\begin{align}
F_N(u;s)
&=
\frac{\sin\pi(s+u)}
{\sin\pi s}
\frac{
\Gamma(N+s+u)
\Gamma(N-s-u)
}
{
\Gamma(N+s)
\Gamma(N-s)
}.
\label{eq:reflection-form}
\end{align}
Now we use the standard asymptotic formula
\[
\frac{\Gamma(N+a)}{\Gamma(N+b)}
=
N^{a-b}(1+O(N^{-1}))
\]
as \(N\to\infty\). Therefore,
\[
\frac{
\Gamma(N+s+u)
\Gamma(N-s-u)
}
{
\Gamma(N+s)
\Gamma(N-s)
}
=
N^uN^{-u}(1+O(N^{-1}))
=
1+O(N^{-1}).
\]
Consequently,
\[
\lim_{N\to\infty}F_N(u;s)
=
\frac{\sin\pi(s+u)}
{\sin\pi s}.
\]
This proves the generating function identity
\[
\sum_{r=0}^{\infty}
\left(
\lim_{N\to\infty}\mathcal{T}_N(1^r;s)
\right)u^r
=
\frac{\sin\pi(s+u)}
{\sin\pi s}=\cos(\pi u)
+
\cot(\pi s)\sin(\pi u).
\]
This completes the proof.
\end{proof}

\subsection{Vertical limits}

We first record a growth estimate which complements
Corollary~\ref{cor:regularized-Hurwitz-vertical-decay}.

\begin{lem}[Polylogarithmic growth]
\label{lem:regularized-Hurwitz-polylog-growth}
Let \(\bfk\) be any index, possibly empty, and let
\(T\in\mathbb C\) be fixed. Then there exists
\(A_{\bfk}\geqslant0\) such that
\begin{align}
\zeta_*^{T-H(\pm iy)}
(\bfk;\pm iy)
=
O_{\bfk}
\left(
(1+\log|y|)^{A_{\bfk}}
\right)
\label{eq:regularized-Hurwitz-polylog-growth}
\end{align}
as \(|y|\to+\infty\). If \(\bfk\) contains at least one component greater
than \(1\), then one has the stronger estimate
\begin{align}
\zeta_*^{T-H(\pm iy)}
(\bfk;\pm iy)
=
O_{\bfk}
\left(
\frac{(1+\log|y|)^{A_{\bfk}}}{|y|}
\right).
\label{eq:regularized-Hurwitz-power-decay}
\end{align}
\end{lem}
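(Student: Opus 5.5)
The plan is to reduce everything to the polynomial expansion \eqref{eq:regularization-polynomial-form}. Writing
\[
\zeta_*^X(\bfk;s)=\sum_{l=0}^{r}X^l\sum_{\boldsymbol a}c_{l,\boldsymbol a}\,\zeta(\boldsymbol a;s),
\]
with every \(\boldsymbol a\) admissible or empty, we substitute \(X=T-H(\pm iy)\) and \(s=\pm iy\). Since the sum is finite, it suffices to bound each term \((T-H(\pm iy))^l\zeta(\boldsymbol a;\pm iy)\) separately.

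First we control the parameter. Let \(y\) be real with \(|y|\geqslant1\). The digamma asymptotic used in Lemma~\ref{lem:Hurwitz-vertical-decay} gives \(H(\pm iy)=\log(\pm iy)+\gamma+O(|y|^{-1})\). Because \(T\) is fixed, this yields
\[
(T-H(\pm iy))^l=O_{T,l}\bigl((1+\log|y|)^l\bigr).
\]
Note that \(\operatorname{Re}(\pm iy)=0>-1\), so \(H\) is given by its series or integral there; this covers both signs.

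Next we bound the zeta factor, splitting into two cases. If \(\boldsymbol a=\varnothing\), then \(\zeta(\boldsymbol a;s)=1\), and the term is \(O((1+\log|y|)^l)\). If \(\boldsymbol a\) is nonempty and admissible, Lemma~\ref{lem:Hurwitz-vertical-decay} gives \(O(|y|^{1-a_{\rm last}}(1+\log|y|)^{\ell(\boldsymbol a)+l-1})\). That lemma is stated for \(iy\), but its proof only uses \(|n+iy|\geqslant n\) and \(|n+iy|^2=n^2+y^2\), so it applies verbatim to \(-iy\) (equivalently, conjugate). Since \(a_{\rm last}\geqslant2\), the term is \(O(|y|^{-1}(1+\log|y|)^{A})\). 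Summing the finitely many terms, with \(A_{\bfk}\) taken as the maximal exponent, gives \eqref{eq:regularized-Hurwitz-polylog-growth}.

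For the stronger estimate \eqref{eq:regularized-Hurwitz-power-decay}, we repeat the weight argument from Corollary~\ref{cor:regularized-Hurwitz-vertical-decay}. An empty \(\boldsymbol a\) would force \(l=|\bfk|\) by weight homogeneity. But \(l\leqslant r<|\bfk|\) when some \(k_j\geqslant2\), a contradiction. So only nonempty admissible \(\boldsymbol a\) occur, and each term carries the factor \(|y|^{-1}\).

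The only point needing care is that the implied constants depend on \(T\). Since \(T\) is fixed, this is harmless, though the notation \(O_{\bfk}\) suppresses it. This is essentially a repackaging of Corollary~\ref{cor:regularized-Hurwitz-vertical-decay} together with the trivial empty-index contribution, and I expect no real obstacle.
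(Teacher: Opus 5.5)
Your proposal is correct and follows essentially the same route as the paper. You expand via the polynomial form of the stuffle regularization, bound $T-H(\pm iy)$ by $O(1+\log|y|)$, invoke the vertical-decay lemma for the nonempty admissible terms, and exclude the empty index by the weight-homogeneity argument of the decay corollary for the stronger estimate. Your observations that the decay lemma applies verbatim at $-iy$ and that the implied constants depend on the fixed $T$ are accurate; the paper leaves both points implicit.
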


\begin{proof}
By the polynomial form of the stuffle regularization,
\[
\zeta_*^X(\bfk;s)
=
\sum_{l,\boldsymbol{\alpha}}
c_{l,\boldsymbol{\alpha}}\,
X^l\zeta(\boldsymbol{\alpha};s),
\]
where every \(\boldsymbol{\alpha}\) is admissible or empty. Since
\[
T-H(\pm iy)=O(1+\log|y|),
\]
and every nonempty admissible multiple Hurwitz zeta function has at
most polynomial logarithmic growth along the imaginary axis,
\eqref{eq:regularized-Hurwitz-polylog-growth} follows.

If \(\bfk\) contains a component greater than \(1\),
the empty index does not occur in its stuffle decomposition, as shown
in the proof of
Corollary~\ref{cor:regularized-Hurwitz-vertical-decay}. Applying that
corollary to each nonempty admissible term gives
\eqref{eq:regularized-Hurwitz-power-decay}.
\end{proof}

\begin{lem}[Vertical decay of regularized multitangent functions]
\label{lem:regularized-multitangent-vertical-decay}
Let $\bfk=(k_1,\cdots,k_r)$ be an index. Assume that \(k_j\geqslant2\) for at least one
\(j\in\{1,\cdots,r\}\). Then, we have
\begin{align}
\lim_{y\to\pm\infty}
\mathcal T_*^T(\bfk;iy)=0.
\label{eq:regularized-multitangent-vertical-decay}
\end{align}
\end{lem}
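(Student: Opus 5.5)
The plan is to estimate each term of Definition~\ref{def:regularized-multitangent} directly at $s=iy$, using the two growth results already available: Lemma~\ref{lem:regularized-Hurwitz-polylog-growth} and its strengthened form \eqref{eq:regularized-Hurwitz-power-decay}. For $s=iy$ with $|y|$ large, every factor in \eqref{eq:regularized-multitangent} has one of two forms. The first is $\zeta_*^{T-H(-iy)}(\overleftarrow{\bfk_{[1,j]}};-iy)$ or $\zeta_*^{T-H(-iy)}(\overleftarrow{\bfk_{[1,j)}};-iy)$. The second is $\zeta_*^{T-H(iy)}(\bfk_{(j,r]};iy)$. Both are covered by Lemma~\ref{lem:regularized-Hurwitz-polylog-growth}, which treats the signs $\pm iy$ simultaneously. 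So every factor is $O((1+\log|y|)^{A})$ for a suitable $A$ depending only on $\bfk$. The empty-index factors equal $1$ and trivially satisfy this bound.

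\emph{The second sum.} Consider the $j$-th term
\[
\frac{(-1)^{|\bfk_{[1,j)}|}}{(iy)^{k_j}}
\,\zeta_*^{T-H(-iy)}\bigl(\overleftarrow{\bfk_{[1,j)}};-iy\bigr)
\,\zeta_*^{T-H(iy)}\bigl(\bfk_{(j,r]};iy\bigr).
\]
Its two zeta factors are each $O((1+\log|y|)^{A})$, and $k_j\geqslant1$. Hence the term is $O\bigl(|y|^{-1}(1+\log|y|)^{2A}\bigr)$, which tends to $0$. This step does not use the hypothesis on $\bfk$.

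\emph{The first sum.} Fix $0\leqslant j\leqslant r$. The pieces $\bfk_{[1,j]}$ and $\bfk_{(j,r]}$ together contain every entry of $\bfk$, and reversal does not change the multiset of entries. By hypothesis some $k_{j_0}\geqslant2$, so exactly one of the two factors, $\zeta_*^{T-H(-iy)}(\overleftarrow{\bfk_{[1,j]}};-iy)$ or $\zeta_*^{T-H(iy)}(\bfk_{(j,r]};iy)$, has an index containing a component greater than $1$. That factor is $O\bigl((1+\log|y|)^{A}/|y|\bigr)$ by \eqref{eq:regularized-Hurwitz-power-decay}. The other factor is $O((1+\log|y|)^{A})$, so the product is $O\bigl((1+\log|y|)^{2A}/|y|\bigr)\to0$. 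Summing the finitely many terms over $j$ gives \eqref{eq:regularized-multitangent-vertical-decay} for both $y\to+\infty$ and $y\to-\infty$.

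\emph{Main point to check.} The step needing the most care is that Lemma~\ref{lem:regularized-Hurwitz-polylog-growth} really applies to the \emph{negative} factor, which is evaluated at $-iy$ with parameter $T-H(-iy)$. This comes down to three facts, all of which I expect to hold without difficulty:
\begin{itemize}
\item[(i)] The convergent Hurwitz zeta functions $\zeta(\boldsymbol a;\pm iy)$ are defined at $\pm iy$, since $|n\pm iy|\geqslant n$ for $n\geqslant1$.
\item[(ii)] The bound $T-H(\pm iy)=O(1+\log|y|)$ holds. This follows from $H(\pm iy)=\log(\pm iy)+\gamma+O(|y|^{-1})$, i.e.\ the digamma asymptotics used in Lemma~\ref{lem:Hurwitz-vertical-decay}, which are valid in both vertical directions.
\item[(iii)] The weight argument of Corollary~\ref{cor:regularized-Hurwitz-vertical-decay} excludes the empty index from the expansion \eqref{eq:regularization-polynomial-form} of the factor that contains the large entry. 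Only this exclusion upgrades that factor from polylogarithmic growth to decay.
\end{itemize}
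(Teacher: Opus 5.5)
Your proposal is correct and follows essentially the same route as the paper. In the second sum, the factor $(iy)^{-k_j}$ beats the polylogarithmic growth of both Hurwitz factors. In each first-sum term, the factor whose index contains the entry $k_{j_0}\geqslant 2$ decays like $|y|^{-1}$ times a power of $\log|y|$ by \eqref{eq:regularized-Hurwitz-power-decay}, while the other factor grows at most polylogarithmically.

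One small wording fix: it is not \emph{exactly} one factor whose index contains an entry greater than $1$. If $\bfk$ has several such entries, both factors may contain one. The correct statement is \emph{at least} one, and that is all your argument uses.
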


\begin{proof}
We treat \(y\to+\infty\); the proof for \(y\to-\infty\) is identical.

Consider first a term in the first sum of
Definition~\ref{def:regularized-multitangent}:
\begin{align}
&
\zeta_*^{T-H(-iy)}
\bigl(\overleftarrow{\bfk_{[1,l]}};-iy\bigr)
\zeta_*^{T-H(iy)}
\bigl(\bfk_{(l,r]};iy\bigr),
\qquad
0\leqslant l\leqslant r.
\label{eq:first-trifactor-term}
\end{align}

If \(l\geqslant j\), the left index $\overleftarrow{\bfk_{[1,l]}}$ contains \(k_j>1\). By
\eqref{eq:regularized-Hurwitz-power-decay}, the left factor is
\[
O\left(
|y|^{-1}(1+\log|y|)^A
\right),
\]
while the right factor is
\[
O\left(
(1+\log|y|)^B
\right)
\]
by \eqref{eq:regularized-Hurwitz-polylog-growth}. Hence the product
tends to zero.

If \(l<j\), then \(k_j\geqslant2\) belongs to the right index
\(\bfk_{(l,r]}\), and the same argument, with the two factors
interchanged, shows that \eqref{eq:first-trifactor-term} tends to zero.
Thus every term in the first sum tends to zero.

A term in the second sum has the form
\begin{align}
\frac{(-1)^{|\bfk_{[1,l)}|}}{(iy)^{k_l}}
\zeta_*^{T-H(-iy)}
\bigl(\overleftarrow{\bfk_{[1,l)}};-iy\bigr)
\zeta_*^{T-H(iy)}
\bigl(\bfk_{(l,r]};iy\bigr).
\label{eq:second-trifactor-term}
\end{align}
Both Hurwitz factors have at most polylogarithmic growth. Therefore,
for some \(A\geqslant0\),
\[
\eqref{eq:second-trifactor-term}
=
O\left(
\frac{(1+\log|y|)^A}{|y|^{k_l}}
\right)
=o(1),
\]
because \(k_l\geqslant1\).

There are only finitely many terms in the definition of
\(\mathcal T_*^T(\bfk;iy)\), so the result follows.
\end{proof}

\subsection{The exact reduction theorem}

Recall from Section~5 that, for
\(0\leqslant m\leqslant k_j-1\),
\begin{align}
C_{j,m}^{T}(\bfk)
:=
\sum_{\substack{a+b=m\\a,b\geqslant0}}
(-1)^{|\bfk_{[1,j)}|+a}
\zeta_{*,a}^{T}
\bigl(\overleftarrow{\bfk_{[1,j)}}\bigr)
\zeta_{*,b}^{T}
\bigl(\bfk_{(j,r]}\bigr).
\label{eq:recall-CjmT}
\end{align}
Define
\begin{align}
c_0(\bfk)
:=
\begin{cases}
\displaystyle
\frac{(i\pi)^r}{r!},
&
\bfk=\{1\}^r
, 2\mid r,
\\[8pt]
0,
&
\text{otherwise},
\end{cases}
\label{eq:def-delta-k}
\end{align}
and
\begin{align}
c_1(\bfk)
:=
\begin{cases}
\displaystyle
(-1)^{(r-1)/2}
\frac{\pi^{r-1}}{r!},
&
\bfk=\{1\}^r
, 2\nmid r,
\\[8pt]
0,
&
\text{otherwise}.
\end{cases}
\label{eq:def-rho-k}
\end{align}

\begin{thm}[Regularized reduction theorem]
\label{thm:regularized-reduction}
Let $\bfk=(k_1,\cdots,k_r)$ be an index. For every regularization parameter \(T\) and every
\(s\in\mathbb C-\mathbb Z\), one has
\begin{align}
\mathcal T_*^T(\bfk;s)
=
c_0(\bfk)
+
c_1(\bfk)\mathcal T(1;s)
+
\sum_{j=1}^{r}
\sum_{m=0}^{k_j-2}
C_{j,m}^{T}(\bfk)
\mathcal T(k_j-m;s).
\label{eq:regularized-reduction}
\end{align}
When \(k_j=1\), the corresponding inner sum is empty.
\end{thm}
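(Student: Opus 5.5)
We combine Theorem~\ref{thm:symmetric-truncation-asymptotic} with Theorem~\ref{thm:asymptotic-reduction}. Fix $s\in\mathcal L$, $s\neq0$. Subtracting the two asymptotic formulas gives
\[
\mathcal T_*^{T_N}(\bfk;s)-C_0^{T_N}(\bfk)-C_1^{T_N}(\bfk)\mathcal T(1;s)-\sum_{j=1}^{r}\sum_{m=0}^{k_j-2}C_{j,m}^{T_N}(\bfk)\mathcal T(k_j-m;s)=o(1).
\]
The left-hand side is $P_s(T_N)$ for a polynomial $P_s(X)$, because every $C^X$ is polynomial in $X$. For $C_0^X$ this holds after expanding $\mathcal R_{\bfk}(X+\log(1\pm t),X+\log(1\mp t))$ in powers of $X$ and integrating coefficientwise; the resulting integrals converge. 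Since $T_N\to+\infty$, the polynomial argument of Corollary~\ref{cor:compatibility-convergent-multitangent} forces $P_s\equiv0$. Hence for every $T$,
\[
\mathcal T_*^T(\bfk;s)=C_0^T(\bfk)+C_1^T(\bfk)\mathcal T(1;s)+\sum_{j}\sum_{m\leqslant k_j-2}C_{j,m}^T(\bfk)\mathcal T(k_j-m;s)
\]
on $\mathcal L-\{0\}$. Both sides are meromorphic and $1$-periodic, by Proposition~\ref{pro:regularized-periodicity} and the periodicity of monotangents. So the identity extends to $\mathbb C-\mathbb Z$.

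It remains to show $C_0^T(\bfk)=c_0(\bfk)$ and $C_1^T(\bfk)=c_1(\bfk)$. Put $s=iy$ and let $y\to\pm\infty$. For $k\geqslant2$ we have $\mathcal T(k;iy)\to0$: this is the classical fact that $\mathcal T(k;s)$ is a derivative of $\pi\cot\pi s$, or follows by dominated convergence. Also $\mathcal T(1;iy)=\pi\cot(\pi iy)\to\mp i\pi$. The limits of the identity are therefore
\[
\lim_{y\to\pm\infty}\mathcal T_*^T(\bfk;iy)=C_0^T(\bfk)\mp i\pi\,C_1^T(\bfk).
\]

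\textbf{Case 1: some $k_j\geqslant2$.} Lemma~\ref{lem:regularized-multitangent-vertical-decay} makes both limits zero. Adding and subtracting the two relations gives $C_0^T=C_1^T=0$, which matches $c_0=c_1=0$.

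\textbf{Case 2: $\bfk=\{1\}^r$.} All inner sums are empty, so the identity reads $\mathcal T_*^T(\{1\}^r;s)=C_0^T+C_1^T\pi\cot\pi s$. Lemma~\ref{lem:pure-one-generating-function} gives $\mathcal T_*^T(\{1\}^r;s)$ explicitly as either a constant or a constant times $\cot\pi s$. Since $1$ and $\cot\pi s$ are linearly independent, comparing coefficients yields:
\begin{itemize}
\item for $r=2m$: $C_0^T=(-1)^m\pi^{2m}/(2m)!=(i\pi)^r/r!$ and $C_1^T=0$;
\item for $r=2m+1$: $C_0^T=0$ and $C_1^T=(-1)^m\pi^{2m}/(2m+1)!=(-1)^{(r-1)/2}\pi^{r-1}/r!$.
\end{itemize}
These are exactly $c_0$ and $c_1$.

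\textbf{Main obstacle.} The delicate step is the polynomial-identity argument for $C_0^X$. We must justify that $C_0^X$ is a genuine polynomial in $X$. We must also check that the $o(1)$ errors in Theorem~\ref{thm:asymptotic-reduction} and Theorem~\ref{thm:symmetric-truncation-asymptotic} are pointwise in a fixed $s$, so that a polynomial evaluated along $T_N\to\infty$ tending to $0$ must vanish. Beyond that, only the vertical limit of $\mathcal T(k;iy)$ needs a short verification.
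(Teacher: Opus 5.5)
Your proposal is correct and follows the paper's proof essentially step for step. You subtract the asymptotics of Theorems~\ref{thm:symmetric-truncation-asymptotic} and~\ref{thm:asymptotic-reduction} at a fixed $s\in\mathcal L-\{0\}$ and force the resulting polynomial in $X$ to vanish because $T_N\to+\infty$. You extend the identity to $\mathbb C-\mathbb Z$ by meromorphy and periodicity. You then fix $C_0^T$ and $C_1^T$ through the limits $y\to\pm\infty$ when some $k_j\geqslant2$, and through the pure-one generating function when $\bfk=\{1\}^r$.
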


\begin{proof}
We first prove an exact reduction formula containing the coefficients
\(C_0^T(\bfk)\) and \(C_1^T(\bfk)\) defined in Section~5.

Fix $s\in\mathcal{L},s\neq0$. For an indeterminate \(X\), define
\begin{align}
 D_{\bfk,s}(X)
:={}&
\mathcal T_*^X(\bfk;s)
-
C_0^X(\bfk)
-
C_1^X(\bfk)\mathcal T(1;s)
\nonumber\\
&-
\sum_{j=1}^{r}
\sum_{m=0}^{k_j-2}
C_{j,m}^{X}(\bfk)
\mathcal T(k_j-m;s).
\label{eq:def-reduction-difference-polynomial}
\end{align}
Every term on the right-hand side is a polynomial in \(X\).
In particular, \(C_0^X(\bfk)\) is a polynomial because its defining
integrand is polynomial in \(X\), and the integral may be taken
coefficientwise.

By Theorem~\ref{thm:symmetric-truncation-asymptotic},
\[
\mathcal T_N(\bfk;s)
=
\mathcal T_*^{T_N}(\bfk;s)+o(1),
\]
whereas Theorem~\ref{thm:asymptotic-reduction} gives
\[
\begin{aligned}
\mathcal T_N(\bfk;s)
=
C_0^{T_N}(\bfk)
+
C_1^{T_N}(\bfk)\cdot\mathcal T(1;s)+
\sum_{j=1}^{r}
\sum_{m=0}^{k_j-2}
C_{j,m}^{T_N}(\bfk)
\mathcal T(k_j-m;s)
+o(1).
\end{aligned}
\]
Subtracting these two asymptotic identities yields
\[
D_{\bfk,s}(T_N)=o(1).
\]
Since \(T_N\to+\infty\) and
\( D_{\bfk,s}(X)\) is a polynomial, it follows that
\[
 D_{\bfk,s}(X)\equiv0.
\]
Therefore,
\begin{align}
\mathcal T_*^T(\bfk;s)
=
C_0^T(\bfk)
+
C_1^T(\bfk)\cdot\mathcal T(1;s)
+
\sum_{j=1}^{r}
\sum_{m=0}^{k_j-2}
C_{j,m}^{T}(\bfk)
\mathcal T(k_j-m;s)
\label{eq:preliminary-exact-reduction}
\end{align}
on the above punctured strip.

Both sides of \eqref{eq:preliminary-exact-reduction} are meromorphic
and \(1\)-periodic in \(s\). Since they agree on a nonempty open set,
the identity theorem extends
\eqref{eq:preliminary-exact-reduction} to all $s\in\mathbb C-\mathbb Z.$

It remains to determine \(C_0^T(\bfk)\) and \(C_1^T(\bfk)\).
For \(k\geqslant2\),
\[
\mathcal T(k;iy)
=
O_k(|y|^{1-k}),
\]
and hence
\[
\lim_{y\to\pm\infty}\mathcal T(k;iy)=0.
\]
Moreover,
\[
\lim_{y\to+\infty}\mathcal T(1;iy)=-i\pi,
\qquad
\lim_{y\to-\infty}\mathcal T(1;iy)=i\pi.
\]
Suppose first that \(\bfk\ne\{1\}^r\). Equivalently, some component of
\(\bfk\) is greater than \(1\). By
Lemma~\ref{lem:regularized-multitangent-vertical-decay},
\[
\lim_{y\to\pm\infty}\mathcal T_*^T(\bfk;iy)=0.
\]
Taking \(y\to+\infty\) and \(y\to-\infty\) in
\eqref{eq:preliminary-exact-reduction}, respectively, gives
\[
C_0^T(\bfk)-i\pi C_1^T(\bfk)=0,
\]
and
\[
C_0^T(\bfk)+i\pi C_1^T(\bfk)=0.
\]
Thus
\[
C_0^T(\bfk)=C_1^T(\bfk)=0.
\]
Now let \(\bfk=\{1\}^r\). In this case, the double sum in
\eqref{eq:preliminary-exact-reduction} is empty, and
Lemma~\ref{lem:pure-one-generating-function} gives
\[
\mathcal T_*^T(\{1\}^r;s)
=
\begin{cases}
\displaystyle
\frac{(i\pi)^r}{r!},
&
2\mid r,
\\[8pt]
\displaystyle

\frac{(i\pi)^{r-1}}{r!}\,
\mathcal T(1;s),
&
2\nmid r.
\end{cases}
\]
It follows that
\[
C_0^T(\{1\}^r)=c_0(\{1\}^r),
\qquad
C_1^T(\{1\}^r)=c_1(\{1\}^r).
\]
Therefore, for every index \(\bfk\),
\[
C_0^T(\bfk)=c_0(\bfk),
\qquad
C_1^T(\bfk)=c_1(\bfk).
\]
Substitution into
\eqref{eq:preliminary-exact-reduction} proves
\eqref{eq:regularized-reduction}.
\end{proof}

\begin{re}
Equivalently, the two exceptional coefficients are characterized by
the cusp identities
\begin{align}
C_0^T(\bfk)-i\pi C_1^T(\bfk)
&=
\begin{cases}
\dfrac{(-i\pi)^r}{r!},
&\bfk=\{1\}^r,\\[6pt]
0,&\bfk\ne\{1\}^r,
\end{cases}
\label{eq:upper-cusp-coefficients}
\\
C_0^T(\bfk)+i\pi C_1^T(\bfk)
&=
\begin{cases}
\dfrac{(i\pi)^r}{r!},
&\bfk=\{1\}^r,\\[6pt]
0,&\bfk\ne\{1\}^r.
\end{cases}
\label{eq:lower-cusp-coefficients}
\end{align}
\end{re}

\begin{re}
With the normalization \(T=0\), Theorem
\ref{thm:regularized-reduction} recovers, up to the index and
notation conventions used here, Bouillot's regularized reduction
formula \cite{Bouillot2014}. For instance, $\mathcal T_*^T(2,1;s)=T\cdot\mathcal T(2;s)$, and hence $\mathcal T_*^0(2,1;s)=0$,
which agrees with Bouillot's convention for this index.
\end{re}

\begin{cor}[Convergent reduction]
\label{cor:convergent-reduction}
If $k_1,k_r\geqslant2$, then
\[
\mathcal T(\bfk;s)
=
\sum_{j=1}^{r}
\sum_{m=0}^{k_j-2}
C_{j,m}^0(\bfk)\,
\mathcal T(k_j-m;s),
\]
where
\[
C_{j,m}^0(\bfk)
=
\sum_{\substack{a+b=m\\a,b\geqslant0}}
(-1)^{|\bfk_{[1,j)}|+a}
\zeta_{*,a}^0
\bigl(\overleftarrow{\bfk_{[1,j)}}\bigr)
\zeta_{*,b}^0
\bigl(\bfk_{(j,r]}\bigr).
\]
All multiple zeta values occurring here are convergent, and the
coefficients are independent of \(T\).
\end{cor}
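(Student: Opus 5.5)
This corollary follows by specializing Theorem~\ref{thm:regularized-reduction} to the convergent case and then setting the regularization parameter to \(T=0\).

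\emph{Step 1.} Assume \(k_1,k_r\geqslant2\). Then some component of \(\bfk\) exceeds \(1\), so \(\bfk\neq\{1\}^r\). By definition this gives \(c_0(\bfk)=c_1(\bfk)=0\).

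\emph{Step 2.} Corollary~\ref{cor:compatibility-convergent-multitangent} gives \(\mathcal T_*^T(\bfk;s)=\mathcal T(\bfk;s)\) for every \(T\) and every \(s\in\mathcal L\). Both sides are meromorphic and \(1\)-periodic, by Propositions~\ref{pro:convergent-multitangent} and~\ref{pro:regularized-periodicity}. Hence the identity extends to all \(s\in\mathbb C-\mathbb Z\).

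\emph{Step 3.} Substituting Steps 1 and 2 into \eqref{eq:regularized-reduction} yields, for every \(T\),
\[
\mathcal T(\bfk;s)=\sum_{j=1}^{r}\sum_{m=0}^{k_j-2}C_{j,m}^T(\bfk)\,\mathcal T(k_j-m;s).
\]
Taking \(T=0\) gives the displayed formula.

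\emph{Step 4: independence of \(T\).} I would argue through the local Laurent data rather than by linear independence alone.
\begin{itemize}
\item[(a)] The functions \(\mathcal T(q;s)\) with \(q\geqslant2\) are linearly independent. Their principal parts at \(s=0\) are \(s^{-q}\), and these have distinct orders.
\item[(b)] For fixed \(j\) and \(m\), however, the same monotangent \(\mathcal T(q;s)\) may arise from several pairs \((j,m)\) with \(k_j-m=q\). So (a) alone only shows that the grouped sums \(\sum_{k_j-m=q}C^T_{j,m}(\bfk)\) are independent of \(T\).
\item[(c)] The stronger claim, that each individual \(C_{j,m}^T(\bfk)\) is \(T\)-free, comes from admissibility of the two indices. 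The index \(\bfk_{(j,r]}\) is admissible because \(k_r\geqslant2\), or it is empty. The reversed prefix \(\overleftarrow{\bfk_{[1,j)}}\) ends in \(k_1\geqslant2\), so it is also admissible or empty.
\item[(d)] For an admissible index \(\boldsymbol a\), every shifted index \((a_1+\lambda_1,\ldots)\) has last entry at least \(2\). Definition~\ref{def:stuffle-regularization} then gives \(\zeta_*^X(\cdot)=\zeta(\cdot)\) on these indices. Therefore \(\zeta_{*,a}^T=\zeta_{*,a}^0\) is an ordinary convergent MZV combination, and each \(C_{j,m}^T(\bfk)\) is independent of \(T\).
\end{itemize}

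The only point needing care is this admissibility bookkeeping for the reversed prefix and the shifted indices. It is immediate once one notes that shifts only increase entries.
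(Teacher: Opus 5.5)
Your proof is correct and follows the paper's argument: you combine Corollary~\ref{cor:compatibility-convergent-multitangent} with Theorem~\ref{thm:regularized-reduction}, in which $c_0(\bfk)=c_1(\bfk)=0$ because $\bfk\neq\{1\}^r$, and you note that $\overleftarrow{\bfk_{[1,j)}}$ and $\bfk_{(j,r]}$ are admissible (a property the shifts preserve), so every $\zeta_{*,a}^T$ involved is a $T$-free convergent MZV combination. Your Steps 1--2 only make explicit what the paper leaves implicit (the vanishing of $c_0,c_1$ and the extension of the compatibility identity from $\mathcal L$ to $\mathbb C-\mathbb Z$), and the linear-independence discussion in Step 4(a)--(b) is not needed.
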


\begin{proof}
By Corollary~\ref{cor:compatibility-convergent-multitangent},
\[
\mathcal T_*^T(\bfk;s)=\mathcal T(\bfk;s).
\]
Moreover, since \(k_1,k_r\geqslant2\), the reversed prefix
\(\overleftarrow{\bfk_{[1,j)}}\) and the suffix
\(\bfk_{(j,r]}\) are admissible whenever they are nonempty. Hence the
regularized values appearing in the coefficients are ordinary
convergent MZVs.
\end{proof}

\subsection{Relations among stuffle-regularized MZVs}

For \(k\geqslant1\) and \(m\geqslant0\), define
\begin{align}
\mu_m(k)
:=
\begin{cases}
0,
&(k,m)=(1,0),
\\[4pt]
\displaystyle
\bigl(1+(-1)^{k+m}\bigr)
\binom{-k}{m}\zeta(k+m),
&k+m>1.
\end{cases}
\label{eq:def-mu-mq}
\end{align}

\begin{lem}[Local expansion of a monotangent]
\label{lem:local-monotangent-expansion}
For every \(k\geqslant1\) and \(0<|s|<1\),
\begin{align}
\mathcal T(k;s)
=
\frac1{s^k}
+
\sum_{m=0}^{\infty}
\mu_m(k)s^m.
\label{eq:local-monotangent-expansion}
\end{align}
\end{lem}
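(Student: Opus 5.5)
We split off the $n=0$ term and expand the remaining terms of the defining series as power series in $s$, taking $k\geqslant2$ and $k=1$ separately since the series converges absolutely only in the first case.

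\textbf{Case $k\geqslant2$.} Write
\[
\mathcal T(k;s)=\frac1{s^k}+\sum_{n\neq0}\frac1{(n+s)^k}.
\]
For $0<|s|<1$ and $n\neq0$, the generalized binomial expansion gives
\[
(n+s)^{-k}=\sum_{m\geqslant0}\binom{-k}{m}n^{-k-m}s^m.
\]
To interchange the sums over $n$ and $m$, we check absolute convergence:
\[
\sum_{n\neq0}\sum_{m\geqslant0}\left|\binom{-k}{m}\right||n|^{-k-m}|s|^m
=\sum_{n\neq0}(|n|-|s|)^{-k}<\infty ,
\]
because $k\geqslant2$ and $|s|<1\leqslant|n|$. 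After interchanging, the coefficient of $s^m$ is
\[
\binom{-k}{m}\sum_{n\neq0}n^{-k-m}=\binom{-k}{m}\bigl(1+(-1)^{k+m}\bigr)\zeta(k+m).
\]
Here $k+m\geqslant2$, so $\zeta(k+m)$ converges. This is exactly $\mu_m(k)$.

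\textbf{Case $k=1$.} Use the symmetric truncation from Definition~\ref{def:symmetric-truncated-multitangent} and pair $n$ with $-n$:
\[
\mathcal T_N(1;s)=\frac1s+\sum_{n=1}^{N-1}\left(\frac1{s+n}+\frac1{s-n}\right)=\frac1s+\sum_{n=1}^{N-1}\frac{2s}{s^2-n^2}.
\]
The paired series converges absolutely and locally uniformly, so $\mathcal T(1;s)$ equals the full paired sum. For $|s|<1\leqslant n$,
\[
\frac{2s}{s^2-n^2}=-2\sum_{p\geqslant0}\frac{s^{2p+1}}{n^{2p+2}}.
\]
The double sum converges absolutely, since $\sum_n\sum_p|s|^{2p+1}n^{-2p-2}=\sum_n\frac{|s|}{n^2-|s|^2}<\infty$. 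Interchanging therefore gives
\[
\mathcal T(1;s)=\frac1s-2\sum_{p\geqslant0}\zeta(2p+2)s^{2p+1}.
\]
We compare this with $\mu_m(1)$. For $m=0$ the definition sets $\mu_0(1)=0$, which matches the absent constant term. For $m\geqslant1$ we have $\binom{-1}{m}=(-1)^m$ and $1+(-1)^{1+m}$, which equals $0$ when $m$ is even and $2$ when $m$ is odd. For odd $m$ this gives $\mu_m(1)=-2\zeta(m+1)$, matching the coefficient of $s^{2p+1}$ with $m=2p+1$.

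No step is a serious obstacle. The only points needing care are the two interchange-of-summation justifications and the $k=1$ case. There the naive termwise expansion of $\sum_{n\neq0}$ produces a divergent $\zeta(1)$ in the $m=0$ coefficient, and it is the symmetric principal value that cancels it and forces $\mu_0(1)=0$.
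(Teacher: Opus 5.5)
Your proof is correct and follows essentially the paper's route: isolate the $n=0$ term, expand $(n+s)^{-k}$ binomially, and sum over $n$ to obtain $\bigl(1+(-1)^{k+m}\bigr)\binom{-k}{m}\zeta(k+m)$, with the pairing of $n$ and $-n$ cancelling the divergent $m=0$ term when $k=1$. The differences are minor: the paper pairs $n$ with $-n$ uniformly for all $k$, whereas you pair only for $k=1$, and you also justify the interchanges of summation by absolute convergence, which the paper leaves implicit.
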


\begin{proof}
Separating the term \(n=0\) and pairing the terms \(n\) and \(-n\),
we have
\[
\mathcal T(k;s)
=
\frac1{s^k}
+
\sum_{n=1}^{\infty}
\left(
\frac1{(n+s)^k}
+
\frac1{(s-n)^k}
\right).
\]
For \(|s|<1\), the binomial expansions give
\[
\frac1{(n+s)^k}
=
\sum_{m=0}^{\infty}
\binom{-k}{m}
\frac{s^m}{n^{k+m}},
\]
and
\[
\frac1{(s-n)^k}
=
\sum_{m=0}^{\infty}
(-1)^{k+m}
\binom{-k}{m}
\frac{s^m}{n^{k+m}}.
\]
Adding the two series and summing over \(n\geqslant1\) yields
\eqref{eq:local-monotangent-expansion}. When \(k=1,m=0\), the paired
constant term is zero, which explains the convention
\(\mu_0(1)=0\).
\end{proof}

Recall the coefficients \(A_m^T(\bfk)\) and \(B_{j,m}^T(\bfk)\) from
Section~3:
\begin{align}
A_m^T(\bfk)
={}&
\sum_{j=0}^{r}
(-1)^{|\bfk_{[1,j]}|}
\sum_{\substack{a+b=m\\a,b\geqslant0}}
(-1)^a
\zeta_{*,a}^T
\bigl(\overleftarrow{\bfk_{[1,j]}}\bigr)
\zeta_{*,b}^T
\bigl(\bfk_{(j,r]}\bigr),
\label{eq:recall-AmT}
\\
B_{j,m}^T(\bfk)
={}&
(-1)^{|\bfk_{[1,j)}|}
\sum_{\substack{a+b=m\\a,b\geqslant0}}
(-1)^a
\zeta_{*,a}^T
\bigl(\overleftarrow{\bfk_{[1,j)}}\bigr)
\zeta_{*,b}^T
\bigl(\bfk_{(j,r]}\bigr).
\label{eq:recall-BjmT}
\end{align}
For \(m\geqslant0\), define
\begin{align}
\Theta_{*,m}^T(\bfk)
:=
A_m^T(\bfk)
+
\sum_{j=1}^{r}
B_{j,k_j+m}^T(\bfk)
-
\sum_{j=1}^{r}
\sum_{h=0}^{k_j-1}
B_{j,h}^T(\bfk)\,
\mu_m(k_j-h).
\label{eq:def-Theta}
\end{align}
Equivalently,
\begin{align}
\Theta_{*,m}^T(\bfk)
={}&
\sum_{j=0}^{r}
(-1)^{|\bfk_{[1,j]}|}
\sum_{\substack{a+b=m\\a,b\geqslant0}}
(-1)^a
\zeta_{*,a}^T
\bigl(\overleftarrow{\bfk_{[1,j]}}\bigr)
\zeta_{*,b}^T
\bigl(\bfk_{(j,r]}\bigr)
\nonumber\\
&+
\sum_{j=1}^{r}
(-1)^{|\bfk_{[1,j)}|}
\sum_{\substack{a+b=k_j+m\\a,b\geqslant0}}
(-1)^a
\zeta_{*,a}^T
\bigl(\overleftarrow{\bfk_{[1,j)}}\bigr)
\zeta_{*,b}^T
\bigl(\bfk_{(j,r]}\bigr)
\nonumber\\
&-
\sum_{j=1}^{r}
\sum_{h=0}^{k_j-1}
(-1)^{|\bfk_{[1,j)}|}
\sum_{\substack{a+b=h\\a,b\geqslant0}}
(-1)^a
\zeta_{*,a}^T
\bigl(\overleftarrow{\bfk_{[1,j)}}\bigr)
\zeta_{*,b}^T
\bigl(\bfk_{(j,r]}\bigr)
\mu_m(k_j-h).
\label{eq:Theta-expanded}
\end{align}

\begin{thm}[Relations among regularized MZVs]
\label{thm:regularized-MZV-relations}
For every index \(\bfk\) and every \(m\geqslant0\),
\begin{align}
\Theta_{*,m}^T(\bfk)
=\delta_{m,0}\cdot c_0(\bfk).
\label{eq:regularized-MZV-relations}
\end{align}
Equivalently,
\[
\Theta_{*,m}^T(\bfk)
=
\begin{cases}
\displaystyle
\frac{(i\pi)^r}{r!},
&
\bfk=\{1\}^r,m=0,2\mid r,
\\[8pt]
0,
&
\mathrm{otherwise}.
\end{cases}
\]
\end{thm}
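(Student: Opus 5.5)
The plan is to compare two Laurent expansions of $\mathcal T_*^T(\bfk;s)$ at $s=0$ on the punctured disc $0<|s|<\tfrac12$. The first is the local expansion of Lemma~\ref{lem:local-multitangent-expansion}. The second comes from substituting the local monotangent expansions of Lemma~\ref{lem:local-monotangent-expansion} into the exact reduction formula of Theorem~\ref{thm:regularized-reduction}. Both are Laurent series on the same punctured disc, so by uniqueness of Laurent coefficients their coefficients of $s^m$, $m\geqslant0$, must coincide. I then expect $\Theta_{*,m}^T(\bfk)-\delta_{m,0}c_0(\bfk)$ to be exactly the difference of these coefficients.

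First I rewrite the reduction formula in a uniform shape. The coefficient $C^T_{j,h}(\bfk)$ of \eqref{eq:recall-CjmT} equals $B^T_{j,h}(\bfk)$ of \eqref{eq:recall-BjmT}, since $(-1)^{|\bfk_{[1,j)}|+a}=(-1)^{|\bfk_{[1,j)}|}(-1)^a$. Moreover, the proof of Theorem~\ref{thm:regularized-reduction} shows $c_1(\bfk)=C_1^T(\bfk)=\sum_j C^T_{j,k_j-1}(\bfk)$. Hence the formula reads
\[
\mathcal T_*^T(\bfk;s)=c_0(\bfk)+\sum_{j=1}^r\sum_{h=0}^{k_j-1}B^T_{j,h}(\bfk)\,\mathcal T(k_j-h;s),
\]
where the $h=k_j-1$ terms combine to give $c_1(\bfk)\mathcal T(1;s)$.

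Next I insert $\mathcal T(k_j-h;s)=s^{h-k_j}+\sum_{m\geqslant0}\mu_m(k_j-h)s^m$. The singular parts reproduce $\sum_j\sum_{h<k_j}B^T_{j,h}s^{h-k_j}$, which matches the principal part in Corollary~\ref{cor:principal-part-multitangent}; this is a consistency check. The coefficient of $s^m$ for $m\geqslant0$ is then
\[
\delta_{m,0}c_0(\bfk)+\sum_{j}\sum_{h=0}^{k_j-1}B^T_{j,h}(\bfk)\mu_m(k_j-h).
\]
On the other side, the regular part of \eqref{eq:local-multitangent-expansion} has $s^m$-coefficient $A_m^T(\bfk)+\sum_j B^T_{j,k_j+m}(\bfk)$, obtained from the terms of the second sum with exponent $m'-k_j=m\geqslant0$, i.e. $m'=k_j+m$. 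Equating the two coefficients gives exactly $\Theta^T_{*,m}(\bfk)=\delta_{m,0}c_0(\bfk)$ by \eqref{eq:def-Theta}. The explicit case form then follows from the definition \eqref{eq:def-delta-k}.

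The only delicate step is the legitimacy of the comparison. Lemma~\ref{lem:local-monotangent-expansion} holds on $0<|s|<1$ and Lemma~\ref{lem:local-multitangent-expansion} on $0<|s|<\tfrac12$, so both identities hold on the common punctured disc. The reduction formula holds on $\mathbb C-\mathbb Z$, which contains that disc. Uniqueness of Laurent expansions of a holomorphic function on an annulus then applies. I also need to handle the convention $\mu_0(1)=0$ correctly: for $k=1$ the constant term is absorbed into $\pi\cot$'s odd expansion, which Lemma~\ref{lem:local-monotangent-expansion} already accounts for. Since both sides are polynomials in $T$ and the identity holds for every $T$, no further argument is needed.
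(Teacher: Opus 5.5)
Your proposal is correct and follows the paper's proof essentially verbatim. Like the paper, you use $C^T_{j,h}(\bfk)=B^T_{j,h}(\bfk)$ and $C_1^T(\bfk)=c_1(\bfk)$ to put Theorem~\ref{thm:regularized-reduction} in the compact form $\mathcal T_*^T(\bfk;s)=c_0(\bfk)+\sum_{j}\sum_{h=0}^{k_j-1}B^T_{j,h}(\bfk)\,\mathcal T(k_j-h;s)$, expand each monotangent via Lemma~\ref{lem:local-monotangent-expansion}, and equate $s^m$-coefficients with Lemma~\ref{lem:local-multitangent-expansion}; your principal-part consistency check and the explicit justification of uniqueness of Laurent coefficients on the common punctured disc are harmless additions that the paper leaves implicit.
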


\begin{proof}
By Lemma~\ref{lem:local-multitangent-expansion}, the coefficient of
\(s^m\) in the local Laurent expansion of
\(\mathcal T_*^T(\bfk;s)\) is
\begin{align}
A_m^T(\bfk)
+
\sum_{j=1}^{r}B_{j,k_j+m}^T(\bfk).
\label{eq:left-local-coefficient}
\end{align}
On the other hand, Theorem~\ref{thm:regularized-reduction} may be
written in the compact form
\begin{align}
\mathcal T_*^T(\bfk;s)
=
c_0(\bfk)
+
\sum_{j=1}^{r}
\sum_{h=0}^{k_j-1}
B_{j,h}^T(\bfk)\,
\mathcal T(k_j-h;s).
\label{eq:compact-reduction}
\end{align}
Indeed,
\[
\sum_{j=1}^{r}B_{j,k_j-1}^T(\bfk)
=
C_1^T(\bfk)
=c_1(\bfk).
\]
Using Lemma~\ref{lem:local-monotangent-expansion}, the coefficient of
\(s^m\) on the right-hand side of
\eqref{eq:compact-reduction} is
\begin{align}
\delta_{m,0}\cdot c_0(\bfk)
+
\sum_{j=1}^{r}
\sum_{h=0}^{k_j-1}
B_{j,h}^T(\bfk)\,
\mu_m(k_j-h).
\label{eq:right-local-coefficient}
\end{align}
Comparing \eqref{eq:left-local-coefficient} and
\eqref{eq:right-local-coefficient} gives
\[
\Theta_{*,m}^T(\bfk)
=
\delta_{m,0}\cdot c_0(\bfk),
\]
as required.
\end{proof}

\begin{cor}[The constant-term relation]
\label{cor:constant-term-relation}
For every index \(\bfk\),
\begin{align}
\Theta_{*,0}^T(\bfk)
=
\begin{cases}
\displaystyle
\frac{(i\pi)^r}{r!},
&
\bfk=\{1\}^r
,2\mid r,
\\[8pt]
0,
&
\mathrm{otherwise}.
\end{cases}
\label{eq:constant-term-relation}
\end{align}
\end{cor}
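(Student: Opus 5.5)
This corollary is the case \(m=0\) of Theorem~\ref{thm:regularized-MZV-relations}. I would therefore specialize \eqref{eq:regularized-MZV-relations} to \(m=0\), which gives
\[
\Theta_{*,0}^T(\bfk)=\delta_{0,0}\,c_0(\bfk)=c_0(\bfk),
\]
and then insert the explicit definition \eqref{eq:def-delta-k} of \(c_0(\bfk)\). This produces exactly the two cases in \eqref{eq:constant-term-relation}.

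For the write-up to be self-contained, I would also recall where the value of \(c_0(\bfk)\) comes from, as settled in the proof of Theorem~\ref{thm:regularized-reduction}.

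\begin{itemize}
\item If some component of \(\bfk\) exceeds \(1\), then Lemma~\ref{lem:regularized-multitangent-vertical-decay} gives decay of \(\mathcal T_*^T(\bfk;iy)\) as \(y\to\pm\infty\). Comparing this with the limits \(\mp i\pi\) of \(\mathcal T(1;iy)\) forces \(C_0^T(\bfk)=0\).
\item If \(\bfk=\{1\}^r\), then Lemma~\ref{lem:pure-one-generating-function} gives the constant \((i\pi)^r/r!\) for even \(r\). For odd \(r\) it gives a pure multiple of \(\cot(\pi s)\), so the constant term is zero.
\end{itemize}

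It is also worth stating explicitly the form the \(m=0\) relation takes, since the paper links it to Hirose's identity. By \eqref{eq:Theta-expanded}, at \(m=0\) the first sum involves only \(a=b=0\). Its \(j\)-th term is then \((-1)^{|\bfk_{[1,j]}|}\zeta_*^T(\overleftarrow{\bfk_{[1,j]}})\zeta_*^T(\bfk_{(j,r]})\), using \(\zeta_{*,0}^T=\zeta_*^T\).

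For the last sum, note that \(\mu_0(k)=(1+(-1)^k)\zeta(k)\) when \(k>1\), and \(\mu_0(1)=0\). So only even values \(k_j-h\geqslant 2\) contribute, each with weight \(2\zeta(k_j-h)\). I would record this simplification, but it is not needed for the proof.

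There is no real obstacle here. The only point needing care is that \(\delta_{m,0}=1\) at \(m=0\), so the right-hand side of \eqref{eq:regularized-MZV-relations} is precisely \(c_0(\bfk)\) with no additional contribution. All the analytic content — vertical limits, the generating function, and the polynomial-in-\(T\) argument — was already established in Theorems~\ref{thm:regularized-reduction} and~\ref{thm:regularized-MZV-relations}.
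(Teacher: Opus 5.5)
Your proposal is correct and follows the paper's route exactly: the corollary is the $m=0$ case of Theorem~\ref{thm:regularized-MZV-relations}, where $\delta_{0,0}=1$ and the definition of $c_0(\bfk)$ give the two cases. Your side remarks ($\zeta_{*,0}^T=\zeta_*^T$, and $\mu_0(k)=(1+(-1)^k)\zeta(k)$ for $k>1$ with $\mu_0(1)=0$) are also accurate, though not needed.
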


\begin{re}
The identity
\eqref{eq:constant-term-relation} is the constant-term relation
associated with the regularized reduction formula. It is closely
related to the identity appearing in Hirose's multitangent proof of
the parity theorem for multiple zeta values; see
\cite{Hirose2025}.
\end{re}

\medskip

\noindent\textbf{Declaration of competing interests.}
The author declares that he has no known competing interests.

\medskip

\noindent\textbf{Data availability.}
No data were used or generated in this study.

\noindent\textbf{Acknowledgments.} The author would like to thank Professors Liang Xiao, Qingchun Tian, and Binyong Xie for their invaluable support and assistance throughout this research, and also extends appreciation to the School of Mathematical Sciences at Peking University for providing a pleasant working environment.


\begin{thebibliography}{99}

\bibitem{Brown2012}
F. Brown,
Mixed Tate motives over \(\mathbb Z\),
\emph{Ann. of Math. (2)} \textbf{175} (2012), no.~2,
949-976.




\bibitem{Bouillot2014}
O. Bouillot,
The algebra of multitangent functions,
\emph{J. Algebra} \textbf{410} (2014), 148-238.

\bibitem{BroadKre1997}
D.J. Broadhurst and D. Kreimer, D., Association of multiple zeta values with positive knots via Feynman diagrams up to 9 loops, Phys. Lett. B, 1997, 393(3/4), pp.\ 403-412.

\bibitem{BJ}
Jos\'e. Ignacio. Burgos Gil and Javier. Fres\'an, Multiple zeta values: from number to motives, http://javier.fresan.perso.math.cnrs.fr/mzv.pdf.





\bibitem{Euler1}
L. Euler, ``Meditationes circa singulare serierum genus'', Novi Comm. Acad. Sci. Petropolitanae, 20 (1775), 140-186


\bibitem{Goncharov2001}
A.B. Goncharov, Multiple polylogarithms and mixed Tate motives (2001), arXiv:math/0103059.

\bibitem{IKZ2006}
K. Ihara, M. Kaneko and D. Zagier,
Derivation and double shuffle relations for multiple zeta values,
\emph{Compos. Math.} \textbf{142} (2006), 307-338.

\bibitem{Hirose2025}
M. Hirose,
An explicit parity theorem for multiple zeta values via multitangent functions,
\emph{Ramanujan J.} \textbf{67} (2025), Art. 87.


\bibitem{H1992}
M.E. Hoffman, Multiple harmonic series, \emph{Pacific J.\ Math.} \textbf{152}(1992), pp.\ 275--290.

\bibitem{H1997}
M.E. Hoffman, The algebra of multiple harmonic series, \emph{J. Algebra} \textbf{194} (1997), 477–495


\bibitem{KanekoYa2018}
M. Kaneko and S. Yamamoto, A new integral-series identity of multiple zeta values and regularizations, \emph{Selecta Math.} \textbf{24}(2018), pp.\ 2499--2521.


\bibitem{KanekoTs2019}
M. Kaneko and H. Tsumura, On multiple zeta values of level two, \emph{Tsukuba J.Math.} \textbf{44-2}(2020), pp.\ 213--234.

\bibitem{LiCe2025}
 J. Li and C. Xu, Residue theorem, regularization and parity theorem, arXiv:2601.05024.






\bibitem{Todorov2014}
I. Todorov,  Polylogarithms and multizeta values in massless Feynman amplitudes. In Lie Theory and Its Applications in Physics; Dobrev, V., Ed.; Springer: Berlin, Heidelberg, 2014; Volume 111.









\bibitem{DZ1994}
D. Zagier, Values of zeta functions and their applications, First European Congress
of Mathematics, Volume II, Birkhauser, Boston, \textbf{120}(1994), pp.\ 497--512.

\bibitem{Zhao2007d}
J. Zhao, Analytic continuation of multiple polylogarithms, \emph{Anal.\ Math.} \textbf{33}(2007), pp.\ 301--323.

\bibitem{Z2016}
J. Zhao, \emph{Multiple zeta functions, multiple polylogarithms and their special values}, Series on Number
Theory and its Applications, Vol.~12, World Scientific Publishing Co. Pte. Ltd., Hackensack, NJ, 2016.



\end{thebibliography}
\end{document}